\documentclass[12pt,leqno]{article}
\usepackage{graphicx}
\usepackage{epstopdf,epsfig}
\usepackage[colorlinks=false]{hyperref}
\usepackage{csquotes}

\usepackage{xcolor}

\usepackage{amsmath,amsthm,amsfonts,amssymb,latexsym,amscd,enumerate}

\usepackage{palatino}
\usepackage[mathcal]{euler}






\usepackage{xy}
\xyoption{all}

\swapnumbers

\theoremstyle{plain}
\newtheorem*{theorem*}{Theorem} 
\newtheorem*{lemma*}{Lemma}
\newtheorem*{assumption*}{Assumption}

\newtheorem{theorem}{Theorem} 
\newtheorem{lemma}[theorem]{Lemma}
\newtheorem{corollary}[theorem]{Corollary}
\newtheorem*{corollary*}{Corollary}
\newtheorem{proposition}[theorem]{Proposition}
\newtheorem{conjecture}[theorem]{Conjecture}
\theoremstyle{definition}
\newtheorem*{definition*}{Definition}
\newtheorem{definition}[theorem]{Definition}
\newtheorem{example}[theorem]{Example}

\newtheorem{remark}[theorem]{Remark}

\newtheorem*{observation*}{Observation}

\theoremstyle{remark}

\numberwithin{theorem}{section}

\numberwithin{equation}{section}

\newcommand{\Calg}{C^\ast\text{-algebra}}
\newcommand{\Calgs}{C^\ast\text{-algebras}}

\begin{document} 

\title{Direct Splitting Method for\\ the Baum--Connes Conjecture} 
\author{Shintaro Nishikawa\footnote{
Department of Mathematics, Penn State University, University Park, PA 16802, USA.
}}

\date{\today}

\maketitle

\begin{abstract}
We develop a new method for studying the Baum--Connes conjecture, which we call the direct splitting method. We introduce what we call property $(\gamma)$ for $G$-equivariant Kasparov cycles. We show that the existence of a $G$-equivariant Kasparov cycle with property $(\gamma)$ implies the split-injectivity of the assembly map $\mu^G_ A$ for any separable $G$-$\Calg$ $A$. We also show that if such a cycle exists, the assembly map $\mu^G_ A$ is an isomorphism if and only if the cycle acts as the identity on the right-hand side group $K_\ast(A\rtimes_rG)$ of the Baum--Connes conjecture. In a separate paper, with J. Brodzki, E. Guentner and N. Higson, we use this method to give a finite-dimensional proof of the Baum--Connes conjecture for groups which act properly and co-compactly on a finite-dimensional CAT(0)-cubical space.
\end{abstract}


\section{Introduction}

Let $G$ be a second countable, locally compact, Hausdorff topological group. In this paper, we assume that there is a co-compact model $\underline{E}G$ of the universal proper $G$-space (see \cite[Definition 1.6]{BCH94}).

In 1994, Baum, Connes and Higson gave the current formulation of the Baum-Connes conjecture \cite{BCH94} using Kasparov's equivariant $KK$-theory \cite{Ka88}. The Baum--Connes conjecture with coefficients for $G$ is the assertion that the Baum-Connes assembly map
\[
\mu_A^G \colon KK^G_\ast(C_0(\underline{E}G), A) \to KK_\ast(\mathbb{C}, A\rtimes_rG) 
\]
is an isomorphism for any separable $G$-$\Calg$ $A$, where $A\rtimes_rG$ denotes the reduced crossed product of $A$ by $G$. 

The assembly map $\mu^G_A$ is proved to be an isomorphism for a reasonably large class of groups. Moreover, it is proved to be split-injective for a much larger class. We list some of the important known results:
 \begin{itemize}
\item $\mu^G_A$ is an isomorphism for all amenable, or more generally, all a-T-menable groups $G$, and all $A$ (Higson and Kasparov \cite{HK97}, \cite{HK01}). \\
\item $\mu^G_A$ is an isomorphism for all word-hyperbolic groups $G$, and all $A$ (Lafforgue \cite{La12}). \\
\item $\mu^G_A$ is split-injective for all discrete groups $G$ which coarsely embed into a Hilbert space, and all $A$ (Skandalis, Tu and Yu \cite{STY02}). 
\end{itemize}

Most of the known results (including the results listed above) rely on the so-called $\gamma$-element method, also called the dual Dirac method. The main step, initially developed and used by Kasparov, is to find an element $\gamma$ in the Kasparov ring $R(G)=KK^G(\mathbb{C}, \mathbb{C})$ which is uniquely characterized by the following two properties (see \cite{Ka88} or \cite{Tu99} for details):
\begin{enumerate}[(i)]
\item $\gamma=1_K$ in $R(K)$ for any compact subgroup $K$ of $G$.
\item  $\gamma$ factors through a proper $G$-$\Calg$. That is, there is a separable, proper $G$-$C^\ast$-algebra $P$ so that $\gamma$ is the Kasparov product $\delta\otimes_P d$ of two elements $d$ (called Dirac element) in $KK^G(P, \mathbb{C})$ and $\delta$ in $KK^G(\mathbb{C}, P)$ (called dual Dirac element).   
\end{enumerate}

The mere existence of $\gamma$ has the following immediate consequences:
\begin{enumerate}[(i)]
\item $\mu^G_A$ is split-injective for any $A$ (the strong Novikov conjecture). \\
\item $\mu^G_A$ is an isomorphism if and only if $\gamma$ acts as the identity on the right-hand side group $KK_\ast(\mathbb{C}, A\rtimes_rG)$  via the composition
\begin{equation} \label{eq_comp} KK^G_\ast(\mathbb{C}, \mathbb{C}) \to KK^G_\ast(A, A) \to KK_\ast(A\rtimes_rG, A\rtimes_rG). 
\end{equation}
\end{enumerate}

All the following classes of groups are known to admit a $\gamma$-element.
 \begin{itemize}
\item All groups $G$ that act properly and isometrically on a simply connected, complete Riemannian manifold with non-positive sectional curvature, or more generally on a special manifold (Kasparov \cite{Ka88}). This class contains all closed subgroups of Lie groups, or more generally, of almost connected groups. \\
\item All groups $G$ that act properly on a Euclidean building (Kasparov and Skandalis \cite{KS91}). This class contains all closed subgroups of $GL(K)$ where $K$ is a non-archimedean local field. \\
\item All groups $G$ that act properly and isometrically on a bolic space  (Kasparov and Skandalis \cite{KS03}). This class contains all word-hyperbolic groups. \\
\item All groups $G$ that act metrically properly and isometrically on a Hilbert space (Higson and Kasparov \cite{HK97}, \cite{HK01}). This class contains all amenable groups.  \\
\item All discrete groups $G$ that coarsely embed into a Hilbert space (Tu \cite{Tu05}). This class contains all discrete exact groups (c.f. \cite{Oza00}, \cite{GK02}).  \\
\end{itemize}

There is no doubt that the $\gamma$-element method (the dual Dirac method) is a versatile and powerful approach to the Baum--Connes conjecture. However, there can be difficulties in applying the $\gamma$-element method, most notably difficulties in constructing a suitable proper algebra $P$ and factorization. For example, the case of Euclidean buildings, studied by Kasparov and Skandalis \cite{KS91} required quite ingenious constructions, and the case of CAT(0) cubical spaces is still more challenging in spite of the fact that there is a very natural candidate for the $\gamma$-element \cite{BGH19}.

In this article, we shall introduce a more flexible alternative to the $\gamma$-element method.  Let $(H, T)$ be a $G$-equivariant Kasparov cycle which defines the element $[H, T]$ in the Kasparov ring $R(G)=KK^G(\mathbb{C} ,\mathbb{C})$: the Hilbert space $H$ is equipped with a grading and a unitary representation of $G$; the odd, bounded self-adjoint operator $T$ is such that $1-T^2$ and $g(T)-T$ are compact operators for any $g$ in $G$. We say that {\bf the cycle $(H, T)$ has property $(\gamma)$} if the following are satisfied (see Definition \ref{def_gamma'}):
\begin{enumerate}[(i)]
\item $[H, T]=1_K$ in $R(K)$ for any compact subgroup $K$ of $G$.
\item  There is a non-degenerate, $G$-equivariant representation of the $G$-$\Calg$ $C_0(\underline{E}G)$ on $H$ so that the following holds.
\end{enumerate}
\begin{enumerate}
\item[(ii.i)] the function $(g \mapsto [g(\phi), T])$ is in $C_0(G, K(H))$ for any $\phi \in C_0(\underline{E}G)$.
\item[(ii.ii)] $\int_{g \in G} g(c)T g(c)d{\mu_G}(g) - T \in K(H)$ for some cut-off function $c$ on $\underline{E}G$.
\end{enumerate}

For any cycle $(H, T)$ with property $(\gamma)$, we shall define a natural map
\[
\nu_A^{G,T}\colon KK_\ast(\mathbb{C}, A\rtimes_rG) \to KK^G_\ast(C_0(\underline{E}G), A)
\]
which we call the $(\gamma)$-morphism defined by $(H, T)$ (see Definition \ref{def_gamma'_mor}). The following are our main results. Notice the analogy between them and the consequences of the existence of the gamma element described above.

\begin{theorem*} \label{thm_BCC} (See Theorem \ref{thm_BCCmain}) Suppose there is a $G$-equivariant Kasparov cycle $(H, T)$ with property $(\gamma)$. Then:
\begin{enumerate}[(i)]
\item the strong Novikov conjecture holds for $G$, i.e. the assembly map $\mu^G_A$ is split-injective for any $A$,
\item the $(\gamma)$-morphism $\nu^{G, T}_A$ is a left-inverse of the assembly map $\mu^G_A$,
\item the assembly map $\mu^G_A$ is an isomorphism if and only if the cycle $(H, T)$ acts as the identity on the right-hand side group $KK_\ast(\mathbb{C}, A\rtimes_rG)$  via the composition \eqref{eq_comp}.
\end{enumerate}
\end{theorem*}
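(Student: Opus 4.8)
The plan is to prove part~(ii) first — that the $(\gamma)$-morphism $\nu^{G,T}_A$ is a left inverse of the assembly map $\mu^G_A$ — and then to deduce parts~(i) and~(iii) from it by formal arguments. Part~(i) is immediate: a natural left inverse to $\mu^G_A$ is in particular a retraction, so $\mu^G_A$ is split injective, which is the strong Novikov conjecture with coefficients. For part~(iii) the only extra ingredient needed is the companion identity that $\mu^G_A\circ\nu^{G,T}_A$ equals the endomorphism of $KK_\ast(\mathbb{C},A\rtimes_rG)$ induced by the image of $[H,T]$ under the composition~\eqref{eq_comp}; this should fall out of the construction of $\nu^{G,T}_A$ together with the standard compatibility of the assembly map with Kasparov products in the coefficient algebra, and I therefore expect it to be routine. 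Granting it: if $\mu^G_A$ is an isomorphism, then part~(ii) forces $\nu^{G,T}_A=(\mu^G_A)^{-1}$, so $\mu^G_A\circ\nu^{G,T}_A=\mathrm{id}$, which is the statement that the cycle acts as the identity; conversely, if the cycle acts as the identity then $\mu^G_A$ has a right inverse, and together with part~(ii) a two-sided inverse. So everything reduces to~(ii).

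For part~(ii), fix $x\in KK^G_\ast(C_0(\underline{E}G),A)$ and unwind both maps — $\mu^G_A$ through the cut-off projection $p_c\in C_0(\underline{E}G)\rtimes_rG$ and the reduced descent $j^G_r$, and $\nu^{G,T}_A$ through Definition~\ref{def_gamma'_mor}. Using that reduced descent is a functor on $KK^G$ (and compatible with Kasparov products), the composite $\nu^{G,T}_A(\mu^G_A(x))$ can be rewritten as a single Kasparov product $e_T\otimes_{C_0(\underline{E}G)}x$ with a fixed class $e_T\in KK^G(C_0(\underline{E}G),C_0(\underline{E}G))$ — represented by an explicit cycle assembled from $H$, $T$, the $C_0(\underline{E}G)$-representation of property~(ii), and a cut-off function — either by direct inspection, or abstractly because $\nu^{G,T}\circ\mu^G$ is a natural self-transformation of $KK^G_\ast(C_0(\underline{E}G),-)$ and hence given by left Kasparov multiplication. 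Part~(ii) is then equivalent to the single assertion $e_T=1_{C_0(\underline{E}G)}$; in words, $[H,T]$ acts as the identity on the functor $KK^G_\ast(C_0(\underline{E}G),-)$. This is the exact analogue of the classical fact that a $\gamma$-element restricts to $1$ over $\underline{E}G$; the whole point of the present method is that one establishes it \emph{directly}, without factoring $[H,T]$ through a proper $G$-$\Calg$.

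Proving $e_T=1_{C_0(\underline{E}G)}$ is where all three clauses of property~$(\gamma)$ enter. Property~(i), $[H,T]=1_K$ in $R(K)$ for every compact $K\le G$, controls the isotropy directions of $\underline{E}G$, i.e.\ the homogeneous pieces $G/K$. Property~(ii.i), $g\mapsto[g(\phi),T]\in C_0(G,K(H))$, guarantees that the integrals over $G$ appearing after descent — in the regular representation of $C_0(\underline{E}G)\rtimes_rG$ on an $L^2(G,H)$-type module — converge in norm and take values in $K(H)$, so that the cycles for $e_T$ and for $1_{C_0(\underline{E}G)}$ are genuine Kasparov cycles and can be compared. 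Property~(ii.ii), the averaging identity $\int_{g\in G}g(c)Tg(c)\,d\mu_G(g)-T\in K(H)$, is the operator-level statement that $T$ agrees, modulo compacts, with its localization over $\underline{E}G$; from it I would construct an explicit $G$-equivariant, $C_0(\underline{E}G)$-linear operator homotopy — a straight-line or rotation homotopy between $T$ and the averaged, cut-off-compressed operator — from the cycle representing $e_T$ to the one representing $1_{C_0(\underline{E}G)}$.

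The hard part, and the technical core of the paper, is exactly this last homotopy: writing it down and checking that it is a legitimate $G$-equivariant Kasparov homotopy over $C_0(\underline{E}G)$. The delicacy is that one is forced to work with the \emph{reduced} crossed product, so the comparison lives on $L^2(G,H)$-type modules on which the $C_0(\underline{E}G)\rtimes_rG$-action and the averaged operator must be controlled simultaneously; one must verify that all commutators, after compression by the cut-off function and integration over $G$, stay compact along the homotopy, and that the homotopy is norm-continuous — and it is precisely conditions~(ii.i) and~(ii.ii) that are tailored to make this go through. (Alternatively, if one invokes the known isomorphism of $\mu^G$ for proper coefficient algebras, the homotopy can be replaced by the computation $\mu^G_{C_0(\underline{E}G)}(e_T)=[p_c]$ in $K_\ast(C_0(\underline{E}G)\rtimes_rG)$ using injectivity of $\mu^G_{C_0(\underline{E}G)}$; the role of (ii.i) and (ii.ii) is unchanged.)
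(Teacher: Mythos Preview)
Your overall architecture is right and matches the paper: part~(ii) is the crux, (i) is immediate from it, and (iii) follows from (ii) together with the companion identity $\mu^G_A\circ\nu^{G,T}_A=j^G_r(\sigma_A[H,T])_\ast$ (Proposition~\ref{prop_surj} in the paper). However, your proof of~(ii) and your account of where each clause of property~$(\gamma)$ is spent diverge substantially from what the paper actually does.

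You propose to prove $\nu^{G,T}_A\circ\mu^G_A=\mathrm{id}$ by producing a class $e_T\in KK^G(C_0(\underline{E}G),C_0(\underline{E}G))$ and then exhibiting an explicit operator homotopy from $e_T$ to $1_{C_0(\underline{E}G)}$, with~(ii.ii) providing the homotopy. You identify this homotopy as ``the technical core of the paper.'' It is not: the paper never constructs such a homotopy. Instead, the paper first proves the companion identity (Proposition~\ref{prop_surj}) by a direct cycle-level computation---and \emph{that} is where condition~(ii.ii) is used, to identify the averaged operator $\sum_g g(c)Tg(c)$ with $T$ modulo compacts after compressing by the cut-off projection. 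The proof of~(ii) itself (Proposition~\ref{prop_inj}) is then entirely indirect: for coefficients $A$ with $\mu^G_A$ already an isomorphism, a diagram chase using Proposition~\ref{prop_surj} shows $\nu^{G,T}_A\circ\mu^G_A=[H,T]_\ast$; the Meyer--Nest machinery (for every $A$ there is a $KK^G$-morphism $\tilde A\to A$ with $\tilde A$ ``proper'' and inducing an isomorphism on the left-hand side) together with naturality of $\nu^{G,T}$ extends this to arbitrary $A$; finally condition~(i) ($[H,T]=1_K$ for compact $K$) and \cite{MN06} give $[H,T]_\ast=\mathrm{id}$ on $KK^G_\ast(C_0(\underline{E}G),-)$.

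So the trade-off is: your route would be self-contained but requires carrying out a delicate equivariant homotopy over the reduced crossed product that you do not actually write down (and whose details are not obviously routine); the paper's route avoids that homotopy entirely but imports the Meyer--Nest localization result as a black box. Your parenthetical ``alternative'' at the end is in the right spirit but still not what the paper does---the paper does not compute $\mu^G_{C_0(\underline{E}G)}(e_T)$ either; it bootstraps from Proposition~\ref{prop_surj}.
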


\begin{corollary*}  \label{cor_BCC} (See Corollary \ref{cor_BCCmain}) Suppose there is a $G$-equivariant Kasparov cycle $(H, T)$ with property $(\gamma)$ which acts as the identity on the right-hand side group $KK_\ast(\mathbb{C}, A\rtimes_rG)$ via the composition \eqref{eq_comp2} for any $A$. Then, the Baum--Connes conjecture with coefficients holds for $G$. The $(\gamma)$-morphism $\nu^{G, T}_A$ is the inverse of the assembly map $\mu^G_A$ for any $A$.
\end{corollary*}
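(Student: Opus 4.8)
The plan is to deduce the corollary directly from Theorem~\ref{thm_BCCmain}. By hypothesis the cycle $(H,T)$ acts as the identity on $KK_\ast(\mathbb{C}, A\rtimes_rG)$ via the composition \eqref{eq_comp} for \emph{every} separable $G$-$\Calg$ $A$. For each such $A$ this is exactly the condition appearing in Theorem~\ref{thm_BCCmain}(iii), so that theorem yields that $\mu^G_A$ is an isomorphism. Since $A$ was arbitrary, the assembly map is an isomorphism for all separable coefficient algebras, which is precisely the statement of the Baum--Connes conjecture with coefficients for $G$.

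It then remains to identify $\nu^{G,T}_A$ with $(\mu^G_A)^{-1}$. By Theorem~\ref{thm_BCCmain}(ii), the $(\gamma)$-morphism $\nu^{G,T}_A$ is a left inverse of the assembly map, i.e. $\nu^{G,T}_A\circ\mu^G_A=\mathrm{id}$ on $KK^G_\ast(C_0(\underline{E}G),A)$. Composing this identity on the right with the inverse of $\mu^G_A$, which is now available by the previous paragraph, gives $\nu^{G,T}_A=(\mu^G_A)^{-1}$; in particular $\nu^{G,T}_A$ is also a right inverse, and hence a two-sided inverse, of $\mu^G_A$. This completes the argument.

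There is essentially no obstacle here: the corollary is a formal consequence of the main theorem, and all of the genuine content --- the construction of the $(\gamma)$-morphism, the verification that it splits the assembly map, and the identification of the obstruction to surjectivity --- is carried out in the proof of Theorem~\ref{thm_BCCmain}. The only point that requires attention is a bookkeeping one: one should check that the composition referred to in the statement of the corollary agrees with the composition \eqref{eq_comp} used in Theorem~\ref{thm_BCCmain}(iii). Both arise by tensoring with $[H,T]$ over $\mathbb{C}$ and then applying the descent homomorphism, so they coincide by functoriality of descent and of the exterior Kasparov product, and no separate argument is needed.
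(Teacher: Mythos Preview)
Your argument is correct and matches the paper's approach: the paper states Corollary~\ref{cor_BCCmain} without a separate proof, treating it as an immediate consequence of Theorem~\ref{thm_BCCmain}(ii) and (iii), exactly as you do. Your final paragraph about reconciling the two compositions is unnecessary, since \eqref{eq_comp} and \eqref{eq_comp2} are literally the same map written in two places, but the observation is harmless.
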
 

\begin{corollary*}  \label{cor_BCC2} (See Corollary \ref{cor_BCCmain2}) Suppose there is a $G$-equivariant Kasparov cycle $(H, T)$ with property $(\gamma)$ which is homotopic to $1_G$. Then, the Baum--Connes conjecture with coefficients holds for $G$. The $(\gamma)$-morphism $\nu^{G, T}_A$ is the inverse of the assembly map $\mu^G_A$ for any $A$.
\end{corollary*}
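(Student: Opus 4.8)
The plan is to deduce this from the preceding Corollary \ref{cor_BCCmain}: it suffices to check that a $G$-equivariant Kasparov cycle $(H,T)$ which is homotopic to $1_G$ automatically acts as the identity on the right-hand side group $KK_\ast(\mathbb{C}, A\rtimes_rG)$ for \emph{every} separable $G$-$\Calg$ $A$, and then all the conclusions are already contained in Theorem \ref{thm_BCCmain} and Corollary \ref{cor_BCCmain}.

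First I would recall the structure of the composition \eqref{eq_comp} along which $(H,T)$ is required to act as the identity, namely
\[
KK^G_\ast(\mathbb{C}, \mathbb{C}) \xrightarrow{\ \tau_A\ } KK^G_\ast(A, A) \xrightarrow{\ j_r^G\ } KK_\ast(A\rtimes_rG, A\rtimes_rG),
\]
where $\tau_A$ is the external product $x \mapsto x\otimes_{\mathbb{C}} 1_A$ and $j_r^G$ is Kasparov's reduced descent homomorphism. Both $\tau_A$ and $j_r^G$ are functorial, multiplicative with respect to the Kasparov product, and send units to units; hence their composite is a unital ring homomorphism $R(G) \to KK(A\rtimes_rG, A\rtimes_rG)$ carrying $1_G$ to $1_{A\rtimes_rG}$.

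Next, since $(H,T)$ is homotopic to $1_G$ as a $G$-equivariant Kasparov cycle, by definition of $KK^G$ we have $[H,T] = 1_G$ in $R(G) = KK^G(\mathbb{C},\mathbb{C})$. Applying the unital ring homomorphism just described, the image of $[H,T]$ in $KK(A\rtimes_rG, A\rtimes_rG)$ equals $1_{A\rtimes_rG}$ for every separable $G$-$\Calg$ $A$; that is, $(H,T)$ acts as the identity on $KK_\ast(\mathbb{C}, A\rtimes_rG)$ for all $A$. (Note that the same observation applied to the restriction $R(G)\to R(K)$ re-derives condition (i) of property $(\gamma)$, though we are given that already.)

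Finally I would invoke Corollary \ref{cor_BCCmain}: a cycle with property $(\gamma)$ that acts as the identity on the right-hand side group for every $A$ makes $\mu^G_A$ an isomorphism for every $A$, with the $(\gamma)$-morphism $\nu^{G,T}_A$ as its inverse — indeed $\nu^{G,T}_A$ is already a left-inverse of $\mu^G_A$ by Theorem \ref{thm_BCCmain}(ii), and a left-inverse of an isomorphism is automatically a two-sided inverse. This yields the Baum--Connes conjecture with coefficients for $G$ and the asserted description of the inverse. I do not expect any genuine obstacle here; the only thing to be careful about is the bookkeeping of multiplicativity and unitality of $\tau_A$ and the descent map, since all the substantive analytic content has been absorbed into Theorem \ref{thm_BCCmain} and Corollary \ref{cor_BCCmain}.
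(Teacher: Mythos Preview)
Your proof is correct and follows the same route the paper intends: Corollary \ref{cor_BCCmain2} is stated without proof as an immediate consequence of Theorem \ref{thm_BCCmain} and Corollary \ref{cor_BCCmain}, and you have spelled out exactly the intended reasoning, namely that $\sigma_A$ and $j^G_r$ are unital ring homomorphisms so that $[H,T]=1_G$ forces the action on $KK_\ast(\mathbb{C}, A\rtimes_rG)$ to be the identity (hence in particular surjective).
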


By design, the definition of property $(\gamma)$ aims to copy features of an actual $\gamma$-element. Hence, the following results are perhaps not surprising:

\begin{theorem*} (See Theorem \ref{thm_implies}) If the $\gamma$-element exists for some group $G$, then $\gamma$ is represented by a $G$-equivariant Kasparov cycle with property $(\gamma)$.
\end{theorem*}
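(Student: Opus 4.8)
The plan is to produce the cycle with property $(\gamma)$ directly from the Dirac/dual-Dirac factorization, using the proper algebra to supply the representation of $C_0(\underline{E}G)$.

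Write $\gamma = \delta\otimes_P d$ with $P$ a separable proper $G$-$\Calg$, $d\in KK^G(P,\mathbb C)$ and $\delta\in KK^G(\mathbb C,P)$. Since $P$ is proper it is a $C_0(X)$-algebra for a proper $G$-space $X$, i.e.\ it carries a non-degenerate $G$-equivariant $*$-homomorphism $C_0(X)\to ZM(P)$; pulling this back along a classifying $G$-map $X\to\underline{E}G$ (which exists by universality, and along which the pullback stays non-degenerate because $\underline{E}G$ is co-compact) yields a non-degenerate $G$-equivariant $*$-homomorphism $\theta\colon C_0(\underline{E}G)\to ZM(P)$. Choose Kasparov cycles $(H_d,\rho_d,F_d)$ for $d$, with $\rho_d$ non-degenerate (always arrangeable), and $(\mathcal E_\delta,F_\delta)$ for $\delta$, and put $H := \mathcal E_\delta\otimes_{\rho_d}H_d$, a graded Hilbert space with a unitary $G$-representation. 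Let $T$ be a representative of the equivariant Kasparov product, so that $(H,T)$ is a cycle with $[H,T] = \delta\otimes_P d = \gamma$. Composing $\theta$ with the extension of $\rho_d$ to $M(P)$ and letting it act on the $H_d$-factor --- equivalently, letting the central multipliers $\theta(\phi)$ act on the right of $\mathcal E_\delta$ --- defines a non-degenerate $G$-equivariant representation $\Theta\colon C_0(\underline{E}G)\to\mathcal B(H)$. Property~(i) is then immediate: $[H,T] = \gamma$, which by hypothesis equals $1_K$ in $R(K)$ for every compact subgroup $K$ of $G$.

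It remains to verify (ii.i) and (ii.ii), and this is where properness of $P$ is used substantively. Since $\theta$ is central, $\mathcal E_\delta$ is a continuous field of Hilbert modules over $\underline{E}G$, and one arranges the dual-Dirac cycle and the product operator so that $[\Theta(\phi),T]\in K(H)$ for every $\phi\in C_0(\underline{E}G)$ --- the Kasparov product can be taken $C_0(\underline{E}G)$-compatible modulo compacts. The remaining content of (ii.i) is the norm-decay $\|[\Theta(g\phi),T]\|\to 0$ as $g\to\infty$: here one uses that the $G$-action on $\underline{E}G$ is proper, so that for fixed $\phi$ the operators $\Theta(g\phi) = g(\Theta\phi)$ escape to infinity over $\underline{E}G$, while the product operator $T$, built from data living over the proper space, has resolvent dominating the $\underline{E}G$-direction there, so that $T$ is asymptotically scalar on $\mathrm{ran}\,\Theta(g\phi)$ --- in geometric models this is exactly the harmonic-oscillator estimate $\|\Theta(\mathbf{1}_{\{\,d(\cdot,x_0)\ge\rho\,\}})\,(1+D^2)^{-1/2}\| = O(\rho^{-1})$ for the self-adjoint operator $D$ underlying $T$, applied along the supports of $g\phi$. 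For (ii.ii): fix a cut-off function $c$, so that $\int_G g(\Theta c)^2\,d\mu_G(g) = 1$; then
\[
\int_{g\in G} g(\Theta c)\,T\,g(\Theta c)\,d\mu_G(g) \;-\; T \;=\; \int_{g\in G} g(\Theta c)\,[\,T,\,g(\Theta c)\,]\,d\mu_G(g)
\]
as a strictly convergent integral (convergent since $g\mapsto\|g(\Theta c)\xi\|\in L^2(G)$ for each $\xi\in H$), and one checks that the right-hand side lies in $K(H)$ using (ii.i), the relation $g(T)-T\in K(H)$, and the fact that $c$ is compactly supported, which confines the integrand to a compact operator localized over $g\cdot\mathrm{supp}(c)$; the same compact-support-plus-properness mechanism also gives $\int_G g(\Theta c)\,K\,g(\Theta c)\,d\mu_G(g)\in K(H)$ for every $K\in K(H)$, which is the auxiliary fact one needs.

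The main obstacle is the last paragraph, and inside it the two genuinely analytic points: the norm-decay in $g$ of $[\Theta(g\phi),T]$, and the compactness of the $c$-average. Both require choosing the representatives $(H_d,F_d)$, $(\mathcal E_\delta,F_\delta)$ and the product operator $T$ with care, so that $T$ is quantitatively localized near the diagonal over $\underline{E}G$ --- the abstract counterpart of the harmonic-oscillator estimate --- and then propagating the estimates through the internal tensor product; the underlying reason such a choice exists is precisely that $\gamma$ factors through a proper algebra, equivalently that $\gamma$ restricts to the identity over $\underline{E}G$. Everything else --- $G$-equivariance, non-degeneracy of $\Theta$, the identity $[H,T] = \gamma$, and property~(i) --- is formal.
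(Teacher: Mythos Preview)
Your setup is right --- the representation $\Theta$ of $C_0(\underline{E}G)$ comes from the proper-algebra structure on $P$, and property~(i) is automatic since $[H,T]=\gamma$. The gaps are in verifying (ii.i) and (ii.ii), and they are genuine rather than cosmetic.

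For (ii.i): the standard Kasparov product construction (via the usual Technical Theorem) gives you an operator $T$ with $[\Theta(\phi),T]\in K(H)$ for each fixed $\phi$, but it does \emph{not} give the norm decay $\|[\Theta(g\phi),T]\|\to 0$ as $g\to\infty$. Your appeal to a ``harmonic-oscillator estimate'' is a feature of explicit geometric models (Witten Laplacians and the like), not of an abstract $\gamma$-element factoring through an arbitrary proper algebra $P$. Nothing in the standard product forces $T$ to be ``asymptotically scalar'' on $\mathrm{ran}\,\Theta(g\phi)$.

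For (ii.ii): your identity $\int_G g(c)\,T\,g(c)\,d\mu_G - T = \int_G g(c)\,[T,g(c)]\,d\mu_G$ is fine, but the claim that the right-hand side is compact ``using (ii.i)'' does not go through. Knowing that each $[T,g(c)]$ is compact with norm tending to zero is not enough: the integrand $g(c)\,[T,g(c)]$ is not of the form $g(\phi_0)\,S_g\,g(\phi_1)$ to which the square-summability bounds of Lemma~\ref{lem_nondegen_int} apply, and there is no reason for the integral to converge in norm on the complement of a large compact set. So (ii.ii) is not a consequence of (ii.i) in this generality.

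What the paper actually does is prove a \emph{strengthened} Kasparov Technical Theorem (Theorem~\ref{thm_tech}) that produces the partition-of-unity operators $M_1,M_2$ with three extra conditions beyond the usual ones: (V)~$g\mapsto[M_i,g(\phi)]\in C_0(G,J)$; (VI)~$g\mapsto M_2[F,g(\phi)]\in C_0(G,J)$; and (VII)~$\int_G g(c)M_i g(c)\,d\mu_G - M_i\in J$. These are engineered precisely to deliver (ii.i) and (ii.ii). One then sets $T = M_1^{1/4}bM_1^{1/4} + M_2^{1/4}FM_2^{1/4}$ and checks, using Lemma~\ref{lem_property_alg} and conditions (V)--(VII), that this particular representative of the product has property~$(\gamma)$. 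The point is that the representative must be \emph{constructed} with the extra localization and averaging built in; it is not a property that an arbitrary product representative will have, nor one that follows from soft arguments about properness.
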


\begin{theorem*}(See Theorem \ref{thm_unique}) A $G$-equivariant Kasparov cycle $(H, T)$ with property $(\gamma)$, if exists, is unique up to homotopy.
\end{theorem*}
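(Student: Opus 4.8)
The plan is to show that any two $G$-equivariant Kasparov cycles $(H_1,T_1)$ and $(H_2,T_2)$ with property $(\gamma)$ define the same class $\gamma_1:=[H_1,T_1]$, $\gamma_2:=[H_2,T_2]$ in the commutative ring $R(G)=KK^G(\mathbb{C},\mathbb{C})$; since homotopy of Kasparov cycles is exactly the equivalence relation defining $KK^G$, this is precisely the assertion of the theorem. Following the pattern of the classical uniqueness proof for the $\gamma$-element, I would establish $\gamma_1\otimes_{\mathbb{C}}\gamma_2=\gamma_2$ and, by the symmetric argument, $\gamma_2\otimes_{\mathbb{C}}\gamma_1=\gamma_1$; commutativity of $R(G)$ then forces $\gamma_1=\gamma_2$.

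The identity $\gamma_1\otimes_{\mathbb{C}}\gamma_2=\gamma_2$ would come from two ingredients. First, property (ii) of $(H_2,T_2)$ --- as already exploited in the construction of the $(\gamma)$-morphism $\nu^{G,T}$ --- unwinds into a factorization
\[
\gamma_2=\Delta\otimes_{C_0(\underline{E}G)}d_2,
\]
where $d_2\in KK^G_0(C_0(\underline{E}G),\mathbb{C})$ is the class of $(H_2,T_2)$ equipped with the given $C_0(\underline{E}G)$-action --- a genuine Kasparov cycle, since condition (ii.i) evaluated at the identity gives $[\phi,T_2]\in K(H_2)$ for all $\phi$ --- and $\Delta\in KK^G_0(\mathbb{C},C_0(\underline{E}G))$ is the canonical ``cut-off class'' attached to a cut-off function on $\underline{E}G$; crucially, $\Delta$ depends only on $\underline{E}G$ and not on the cycle, and the displayed equation is exactly condition (ii.ii). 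Second, property (i) of $(H_1,T_1)$ says $\gamma_1|_K=1_K$ for every compact $K\leq G$, and, since $\underline{E}G$ is co-compact, this forces $\sigma_{C_0(\underline{E}G)}(\gamma_1)=1$, where $\sigma_{C_0(\underline{E}G)}\colon R(G)\to KK^G_0(C_0(\underline{E}G),C_0(\underline{E}G))$, $x\mapsto x\otimes 1_{C_0(\underline{E}G)}$, is the canonical unital ring homomorphism. This ``locally trivial $\Rightarrow$ trivial over $\underline{E}G$'' statement is standard for $G$-compact $\underline{E}G$; it is also the fact underlying the left-inverse property of $\nu^{G,T}$ in Theorem~\ref{thm_BCCmain}(ii), so I would quote it from there.

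Granting these, the rest is a formal manipulation. Using associativity of the Kasparov product together with the compatibility identity $x\otimes_{\mathbb{C}}\Delta=\Delta\otimes_{C_0(\underline{E}G)}\sigma_{C_0(\underline{E}G)}(x)$ between the interior and exterior products, one computes
\[
\gamma_1\otimes_{\mathbb{C}}\gamma_2=\bigl(\gamma_1\otimes_{\mathbb{C}}\Delta\bigr)\otimes_{C_0(\underline{E}G)}d_2=\bigl(\Delta\otimes_{C_0(\underline{E}G)}\sigma_{C_0(\underline{E}G)}(\gamma_1)\bigr)\otimes_{C_0(\underline{E}G)}d_2=\Delta\otimes_{C_0(\underline{E}G)}d_2=\gamma_2.
\]
Repeating this with the roles of the two cycles exchanged gives $\gamma_2\otimes_{\mathbb{C}}\gamma_1=\gamma_1$, and commutativity of $R(G)$ yields $\gamma_1=\gamma_2$, i.e.\ the cycle is unique up to homotopy.

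The main obstacle I anticipate is the first ingredient: one must verify that the factorization $\gamma_i=\Delta\otimes_{C_0(\underline{E}G)}d_i$ holds with one and the \emph{same} cut-off class $\Delta$ for every property-$(\gamma)$ cycle, so that all of the cycle-dependence is carried by the factor $d_i\in KK^G_0(C_0(\underline{E}G),\mathbb{C})$ --- this is exactly where conditions (ii.i) and (ii.ii) do their work, and where one must be careful about how the averaged operator $\int_G g(c)T_ig(c)\,d\mu_G(g)$ computes the Kasparov product $\Delta\otimes_{C_0(\underline{E}G)}d_i$. By contrast, the second ingredient is a standard fact about $G$-compact universal proper spaces, and the concluding computation with Kasparov products is routine.
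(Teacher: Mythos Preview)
Your overall strategy---prove $\gamma_1\otimes_{\mathbb{C}}\gamma_2=\gamma_2$ using the properness of one cycle and the local triviality of the other, then conclude by commutativity of $R(G)$---is exactly the paper's. The gap is in your first ingredient: there is no canonical ``cut-off class'' $\Delta\in KK^G_0(\mathbb{C},C_0(\underline{E}G))$ manufactured from a cut-off function. A cut-off function $c$ is not $G$-invariant, and for non-compact $E=\underline{E}G$ the naive candidate (the module $C_0(E)$ over itself with zero operator) fails the Fredholm condition since $1\notin C_0(E)$. In fact, a factorization $\gamma_2=\Delta\otimes_{C_0(E)}d_2$ through $C_0(E)$ would exhibit $\gamma_2$ as a genuine $\gamma$-element (with proper algebra $P=C_0(E)$), which is precisely what the paper is designed to avoid assuming.

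What condition~(ii.ii) actually encodes is a factorization through the \emph{crossed product}: the cut-off projection $p_c=\int_G cg(c)u_g\,d\mu_G(g)$ lives in $C_0(E)\rtimes_rG$ (with trivial $G$-action), giving $[p_c]\in KK(\mathbb{C},C_0(E)\rtimes_rG)$, and the paper constructs from a proper cycle $(H,T)$ a class $\bar x\in KK^G(C_0(E)\rtimes_rG,\mathbb{C})$ using $\bar T=(g(T))_{g\in G}$ on $H\otimes L^2(G)$ (Proposition~\ref{prop_barcycle}). Condition~(ii.ii) then gives $[p_c]\otimes_{C_0(E)\rtimes_rG}\bar x=x$ (Proposition~\ref{prop_barcycle_back}). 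The ``locally trivial $\Rightarrow$ trivial'' step becomes $j^G_r(\sigma_{C_0(E)}(y))=\mathrm{id}_{C_0(E)\rtimes_rG}$, and the compatibility identity you need is $\bar x\otimes_{\mathbb{C}}y=j^G_r(\sigma_{C_0(E)}(y))\otimes_{C_0(E)\rtimes_rG}\bar x$ (Proposition~\ref{prop_proper_product}), which requires a genuine Kasparov-product computation rather than a formal manipulation. Once you route the argument through $C_0(E)\rtimes_rG$ in this way, your proof and the paper's coincide (Corollary~\ref{cor_proper_product}).
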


\begin{definition*} (See Definition \ref{def_gamma_element}) Suppose there is a $G$-equivariant Kasparov cycle $(H, T)$ with property $(\gamma)$. We define the $(\gamma)$-element for $G$ to be the unique element in $R(G)$ which is represented by a $G$-equivariant  Kasparov cycle $(H, T)$ with property $(\gamma)$. 
\end{definition*}

If the $\gamma$-element exists for a group $G$, the two notions of $\gamma$-element and $(\gamma)$-element coincide, i.e. $\gamma$ is the $(\gamma)$-element.

\begin{theorem*}(See Theorem \ref{thm_idempotent}) The $(\gamma)$-element is an idempotent in the Kasparov ring $R(G)$.
\end{theorem*}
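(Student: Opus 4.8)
The plan is to deduce this from the uniqueness of cycles with property $(\gamma)$ (Theorem~\ref{thm_unique}). Fix a $G$-equivariant Kasparov cycle $(H,T)$ with property $(\gamma)$, so that $[H,T]\in R(G)$ is the $(\gamma)$-element, and let $\pi\colon C_0(\underline{E}G)\to\mathcal{L}(H)$ be the non-degenerate $G$-equivariant representation furnished by part~(ii); as in the paper I write $g(\phi)$ for $U_g\pi(\phi)U_g^{*}=\pi(g\cdot\phi)$. Since multiplication in $R(G)$ is the Kasparov product over $\mathbb{C}$, it suffices to exhibit \emph{some} $G$-equivariant Kasparov cycle with property $(\gamma)$ that represents $[H,T]\otimes_{\mathbb{C}}[H,T]$: Theorem~\ref{thm_unique} then forces $[H,T]\otimes_{\mathbb{C}}[H,T]=[H,T]$ in $R(G)$, which is idempotency.

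First I would take the obvious candidate, the standard representative of the self-product of $[H,T]$ over $\mathbb{C}$: let $\widetilde H=H\,\widehat\otimes\,H$ carry the diagonal $G$-action and the tensor grading, set
\[
\widetilde T \;=\; T\,\widehat\otimes\,1 \;+\; \sqrt{1-T^{2}}\,\widehat\otimes\,T ,
\]
and represent $C_0(\underline{E}G)$ through the first factor, $\widetilde\pi(\phi)=\pi(\phi)\,\widehat\otimes\,1$. A routine computation (using that $T$ commutes with $\sqrt{1-T^{2}}$) gives $\widetilde T^{\,2}-1=(T^{2}-1)\widehat\otimes 1+(1-T^{2})\widehat\otimes T^{2}\in K(\widetilde H)$, $g(\widetilde T)-\widetilde T\in K(\widetilde H)$, and the positivity $(T\widehat\otimes1)\widetilde T+\widetilde T(T\widehat\otimes1)=2T^{2}\widehat\otimes1\geq 0$, so $(\widetilde H,\widetilde T)$ is a $G$-equivariant Kasparov cycle with $[\widetilde H,\widetilde T]=[H,T]\otimes_{\mathbb{C}}[H,T]$, and $\widetilde\pi$ is non-degenerate and $G$-equivariant because $\pi$ is. Then I would check property $(\gamma)$ for $(\widetilde H,\widetilde T)$. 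Condition~(i) is immediate: over a compact subgroup $K\le G$, $[\widetilde H,\widetilde T]=1_K\otimes_{\mathbb{C}}1_K=1_K$ in $R(K)$. For~(ii.i), since $g(\widetilde\pi(\phi))=g(\phi)\widehat\otimes 1$, one has $[g(\widetilde\pi(\phi)),\widetilde T]=[g(\phi),T]\widehat\otimes 1+[g(\phi),\sqrt{1-T^{2}}]\widehat\otimes T$; the first term lies in $C_0(G,K(H))$ by hypothesis, and the second does as well because commutators with $\sqrt{1-T^{2}}=f(T)$ are controlled — uniformly in $g$ and with $C_0(G)$-decay — by commutators with $T$ (approximate $f$ by polynomials on $\sigma(T)\subseteq[-1,1]$). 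For~(ii.ii) I would keep the same cut-off function $c$; using $g(\widetilde\pi(c))=g(c)\widehat\otimes 1$ and the normalization $\int_G g(c)^{2}\,d\mu_G(g)=1$, the operator $\int_G g(\widetilde\pi(c))\,\widetilde T\,g(\widetilde\pi(c))\,d\mu_G(g)$ splits as $\bigl(\int_G g(c)Tg(c)\,d\mu_G(g)\bigr)\widehat\otimes 1+\bigl(\int_G g(c)\sqrt{1-T^{2}}\,g(c)\,d\mu_G(g)\bigr)\widehat\otimes T$; the first summand is $T\widehat\otimes 1$ modulo $K(\widetilde H)$ by~(ii.ii) for $(H,T)$, and for the second one rewrites $\int_G g(c)\sqrt{1-T^{2}}\,g(c)\,d\mu_G(g)-\sqrt{1-T^{2}}=\int_G g(c)\,[\sqrt{1-T^{2}},g(c)]\,d\mu_G(g)$, which I claim lies in $K(H)$.

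The hard part will be exactly this last claim: that $\int_G g(c)\,[\sqrt{1-T^{2}},g(c)]\,d\mu_G(g)$ is an honest compact operator, i.e.\ that this integral of compacts converges in norm to a value in $K(H)$. Pointwise compactness of $[\sqrt{1-T^{2}},g(c)]=[f(T),\pi(g\cdot c)]$ follows from $[\pi(g\cdot c),T]\in K(H)$ by continuous functional calculus; convergence of the integral requires the $C_0(G)$-decay of $g\mapsto[\pi(g\cdot c),T]$ coming from~(ii.i) together with the compact-support-modulo-$G$ property of the cut-off function $c$ on the co-compact space $\underline{E}G$ — the same kind of estimate already carried out in the paper when verifying~(ii.ii) for $(H,T)$ itself. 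It may be convenient to record first, as a lemma, that $g\mapsto g(T)-T$ and $g\mapsto g(\sqrt{1-T^{2}})-\sqrt{1-T^{2}}$ lie in $C_0(G,K(H))$. Granting this, $(\widetilde H,\widetilde T)$ has property $(\gamma)$, Theorem~\ref{thm_unique} applies to $(\widetilde H,\widetilde T)$ and $(H,T)$, and therefore $[H,T]\otimes_{\mathbb{C}}[H,T]=[H,T]$; thus the $(\gamma)$-element is an idempotent in $R(G)$.
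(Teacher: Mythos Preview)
Your approach has a genuine gap: the cycle $(\widetilde H,\widetilde T)$ with the representation $\widetilde\pi=\pi\,\widehat\otimes\,1$ does \emph{not} satisfy conditions~(ii.i) or~(ii.ii) of property~$(\gamma)$. The problem is that operators of the form $k\,\widehat\otimes\,1$ with $k\in K(H)$ are \emph{never} compact on $\widetilde H=H\,\widehat\otimes\,H$ when $H$ is infinite-dimensional (a rank-one projection $p$ gives $p\,\widehat\otimes\,1$, a projection with infinite-dimensional range). Your own computation yields
\[
[g(\widetilde\pi(\phi)),\widetilde T]\;=\;[g(\phi),T]\,\widehat\otimes\,1\;+\;[g(\phi),\sqrt{1-T^{2}}\,]\,\widehat\otimes\,T,
\]
and already the first summand lies in $K(H)\,\widehat\otimes\,1\not\subset K(\widetilde H)$; the second summand lies in $K(H)\,\widehat\otimes\,B(H)$, again not in $K(\widetilde H)$, and the two do not cancel. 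The same obstruction recurs in~(ii.ii): the term $\bigl(\int_G g(c)Tg(c)\,d\mu_G-T\bigr)\,\widehat\otimes\,1$ is compact-tensor-identity, hence not in $K(\widetilde H)$. So $\widetilde\pi$ cannot witness property~$(\gamma)$ for $(\widetilde H,\widetilde T)$, and there is no evident alternative representation of $C_0(\underline{E}G)$ on $H\,\widehat\otimes\,H$ that fixes this.

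The paper avoids this difficulty by \emph{not} attempting to exhibit a product cycle with property~$(\gamma)$. Instead it proves an absorption principle (Corollary~\ref{cor_proper_product}): if $x\in R(G)$ is represented by a proper cycle and $y\in R(G)$ restricts to $1_K$ in $R(K)$ for every compact subgroup $K$, then $x\otimes_{\mathbb C}y=x$. The mechanism is to pass from $x$ to the element $\bar x\in KK^G(C_0(E)\rtimes_rG,\mathbb C)$ of Proposition~\ref{prop_barcycle}; one shows $[p_c]\otimes\bar x=x$ (Proposition~\ref{prop_barcycle_back}) and $\bar x\otimes_{\mathbb C}y=j^G_r(\sigma_{C_0(E)}(y))\otimes\bar x$ (Proposition~\ref{prop_proper_product}), and then uses that $\sigma_{C_0(E)}(y)=\mathrm{id}_{C_0(E)}$ since $C_0(E)$ is a proper $G$-algebra. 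Taking $x=y$ equal to the $(\gamma)$-element gives idempotency in one line. Note that this route handles the Kasparov product at the level of $KK$-classes and never needs a second explicit cycle with property~$(\gamma)$, which is exactly what your strategy requires and cannot supply.
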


In \cite{BGHN19} with J. Brodzki, E. Guentner and N. Higson, we use our main result to give a new, finite-dimensional proof of the Baum--Connes conjecture for groups which act properly and co-compactly on a finite-dimensional CAT(0) cubical space. The main idea of this work is to modify the natural candidate for the $\gamma$-element constructed in \cite{BGH19} to make it have property $(\gamma)$. In this way, we obtain a proof of the Baum--Connes conjecture which avoids dealing with the difficulties in constructing a suitable algebra $P$ and factorization.

It is an interesting challenge to find the correct definition of property $(\gamma)$ for general groups which may not admit a co-compact model of $\underline{E}G$. This will be investigated in the future.

In \cite{NP19}, with V. Proietti, the idea of the ($\gamma$)-morphism is used to prove a strong duality result between the reduced  group $C^\ast$-algebra $C^\ast_r(G)$ and the crossed product algebra $C_0(\underbar{E}G)\rtimes_rG$ for any discrete group $G$ which admits a $G$-equivariant Kasparov cycle $x$ with property $(\gamma)$ such that $j^G_r(x)=\mathrm{id}_{C^\ast_r(G)}$ where $j^G_r$ is the descent map.

\paragraph*{Acknowledgements} I would like to thank my advisor, Nigel Higson at Penn State University for many of his insights on this project. This work was born out of other project with Jacek Brodzki, Erik  Guentner and Nigel Higson ``On the Baum--Connes conjecture and CAT(0)-cubical spaces'' \cite{BGHN19}. I would like to thank all of them for kindly welcoming me to joint the project. This paper is partly written during the summer 2018 at Penn State University under the support of Professor Higson. I would like to thank him for the support during this time. I would also like to thank Heath Emerson, Siegfried Echterhoff, Kang Li, Yiannis Loizides, Wolfgang L\"uck, Valerio Proietti, Alain Valette and Rufus Willett for their helpful comments.
 
 
\section{Property $(\gamma)$}
Throughout this article, $G$ denotes a second countable, locally compact, Hausdorff topological group which admits a co-compact model $E=\underline{E}G$ of the universal proper $G$-space. For a discrete, torsion-free group $G$, this means that there is a compact model $BG$ of the classifying space of $G$ and that $E$ is its universal cover, equipped with the deck-transformation action by $G$. The following are some examples of $G$ (and $E)$ we can take:
\begin{itemize}
\item the free abelian group $\mathbb{Z}^n$ and for $E$, the Euclidean space $\mathbb{R}^n$ equipped with the translation action by $\mathbb{Z}^n$.
\item the free group $F_n$ with $n$ generators and for $E$, the universal cover of the wedge $S^1\vee \cdots \vee S^1$ of $n$-many circles.
\item the fundamental group of a compact, aspherical manifold and for $E$, the universal cover of the manifold. 
\item any co-compact closed subgroup of a connected, semi-simple Lie group $L$ (more generally, of any almost connected, locally compact topological group $L$) and for $E$, the homogeneous space $L/K$ where $K$ is a maximal compact subgroup of $L$.
\end{itemize}
We recall the definition of a $G$-equivariant Kasparov cycle from \cite{Ka88}. For a Hilbert space $H$, we denote by $K(H)$, the algebra of all compact operators on $H$.

\begin{definition} A $G$-equivariant Kasparov cycle is a pair $(H, T)$ where $H$ is a separable, graded $G$-Hilbert space and $T$ is an odd, bounded and self-adjoint operator on $H$ with the following two conditions. 
\[
\text{Fredholm condition:} \,\,\,\,\, 1-T^2 \in K(H).
\]
\[
\text{Almost-$G$-equivariance:} \,\,\,\,  g(T)-T \in K(H) \,\,\, \text{for any $g \in G$},
\]
where $g(T) = u_g T  u_g^\ast$ is the conjugation by the representation $u_g$ of $g$ in $G$.
\end{definition}

There is a natural notion of homotopies of $G$-equivariant Kasparov cycles (see \cite[Definition 2.3]{Ka88}). The commutative ring $R(G)=KK^G(\mathbb{C}, \mathbb{C})$ is defined as the set of homotopy equivalence classes of $G$-equivariant Kasparov cycles. We write by $[H, T]$, the element in $R(G)$ defined by a $G$-equivariant Kasparov cycle $(H, T)$. The addition and the multiplication of the ring $R(G)$ are defined by the direct sum operation and by the Kasparov product. See \cite{Ka88}, \cite{Bl98} for more details.

Any pair ($H^{(0)}, H^{(1)}$) of finite-dimensional unitary representations of the group $G$ defines a $G$-equivariant Kasparov cycle $(H^{(0)}\oplus H^{(1)}, 0)$ and hence an element in $R(G)$. We denote by $1_G$, the one $[\mathbb{C}\oplus0, 0]$ which corresponds to the trivial representation of $G$. The element $1_G$ is the unit in the ring $R(G)$.

As explained briefly in the introduction, there is the notion called the gamma element $\gamma$ in the ring $R(G)$ which plays a central role in the Baum--Connes conjecture. The mere existence of the gamma element for the group $G$ not only has profound consequences for the Baum--Connes conjecture (for example, it implies the split-injectivity of the assembly maps $\mu^G_A$ for all coefficients $A$) but it also has several applications in $C^\ast$-algebra theory. The next definition, property $(\gamma)$ for a $G$-equivariant Kasparov cycle, is supposed to capture the remarkable properties of the gamma element at the level of $G$-equivariant Kasparov cycles. 

For a locally compact, Hausdorff space $X$ and a $\Calg$ $A$, we denote by $C_0(X, A)$, the $\Calg$ of continuous $A$-valued functions on $X$ vanishing at infinity. Recall that a compactly supported, continuous, non-negative function $c$ on a (co-compact) proper $G$-space $X$ such that the Haar integral 
\[\displaystyle \int_{g \in G}g(c)^2d\mu_G(g)=1
\] is called a cut-off function. We also note that for any closed subgroup $K$ of $G$, a $G$-equivariant Kasparov cycle can be viewed as a $K$-equivariant cycle. 

\begin{definition} \label{def_gamma'} We say that a $G$-equivariant Kasparov cycle $(H, T)$ has property $(\gamma)$ if the following are satisfied 
\begin{enumerate}[(i)]
\item $[H, T]=1_K$ in $R(K)$ for any compact subgroup $K$ of $G$.
\item  There is a non-degenerate, $G$-equivariant representation of the $G$-$\Calg$ $C_0(E)$ on $H$ so that the following holds.
\end{enumerate}
\begin{enumerate}
\item[(ii.i)] the function $(g \mapsto [g(\phi), T])$ is in $C_0(G, K(H))$ for any $\phi \in C_0(E)$.
\item[(ii.ii)] $\int_{g \in G} g(c)T g(c)d{\mu_G}(g) - T \in K(H)$ for some cut-off function $c$ on $E$.
\end{enumerate}
\end{definition}

We first show that the gamma element, if exists, is represented by a $G$-equivariant Kasparov cycle with property $(\gamma)$. Before proving this, let us first prove simple and useful lemmas some of which we shall use in later sections as well.

\begin{lemma} \label{lem_nondegen} Let $X$ be a locally compact, Hausdorff, proper $G$-space. For any non-degenerate representation of the $G$-$\Calg$ $C_0(X)$ on a $G$-Hilbert space $H$, we have
\[
\lim_{g \to \infty}||\phi g(T)|| = 0
\]
for any compact operator $T$ on $H$ and for any $\phi$ in $C_0(X)$.
\end{lemma}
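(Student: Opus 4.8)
The plan is to reduce to the case where $\phi$ has compact support and $T$ is a rank-one operator, and then use properness of the $G$-action on $X$ together with non-degeneracy of the representation. First I would observe that both sides of the claimed estimate are continuous in $\phi$ (in the $C_0(X)$-norm) and in $T$ (in the operator norm), uniformly in $g$: indeed $\|\phi\, g(T)\| \le \|\phi\|\,\|T\|$ and $\|\phi\|$ is $G$-invariant under the action on functions while $\|g(T)\| = \|T\|$. Hence it suffices to prove the statement for $\phi$ ranging over a dense subset of $C_0(X)$ — we may take $\phi$ compactly supported, say $\operatorname{supp}\phi \subseteq L$ for some compact $L \subseteq X$ — and for $T$ ranging over a dense subset of $K(H)$, for which we take finite-rank operators, and then by linearity a single rank-one operator $T = \xi\langle\eta,\,\cdot\,\rangle$.

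Next, by non-degeneracy of the representation of $C_0(X)$ on $H$, the vectors of the form $\psi\cdot\zeta$ with $\psi \in C_0(X)$, $\zeta \in H$, are dense in $H$; again using the uniform bound $\|\phi\, g(\xi\langle\eta,\cdot\rangle)\| \le \|\phi\|\,\|\xi\|\,\|\eta\|$ and $g(\xi\langle\eta,\cdot\rangle) = (u_g\xi)\langle u_g\eta,\,\cdot\,\rangle$, I can approximate $\eta$ by $\psi\cdot\zeta$ and so reduce to the case $\eta = \psi\cdot\zeta$ with $\psi$ compactly supported, $\operatorname{supp}\psi \subseteq L'$ compact. Then
\[
\phi\, g(\xi\langle\eta,\cdot\,\rangle) = \phi\cdot(u_g\xi)\,\big\langle u_g(\psi\cdot\zeta),\,\cdot\,\big\rangle = \big(\phi\cdot g(\psi)\cdot u_g\xi\big)\,\big\langle u_g\zeta,\,\cdot\,\big\rangle,
\]
using $G$-equivariance of the representation, so that $\|\phi\, g(\xi\langle\eta,\cdot\rangle)\| \le \|\phi\cdot g(\psi)\|\,\|\xi\|\,\|\zeta\|$ where $g(\psi)$ denotes the translate, which is supported in $gL'$.

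The key point is now that $\phi\cdot g(\psi) = 0$ once $gL' \cap L = \emptyset$, and by properness of the $G$-action on $X$ the set $\{g \in G : gL' \cap L \neq \emptyset\}$ is relatively compact in $G$. Hence $\phi\cdot g(\psi) = 0$ for all $g$ outside a compact subset of $G$, which gives $\|\phi\, g(T)\| = 0$ for $g$ large in this reduced case, and therefore $\lim_{g\to\infty}\|\phi\, g(T)\| = 0$ in general by the density and continuity arguments above. The main obstacle — really the only substantive point — is bookkeeping the $\varepsilon$'s in the two successive approximations (of $\phi$/$T$ by compactly supported/finite-rank data, and of $\eta$ by $\psi\cdot\zeta$) so that the exceptional set of $g$ stays within a fixed compact set up to the desired tolerance; this is routine given that properness makes the relevant return set $\{g : gL' \cap L \neq \emptyset\}$ precompact and that all the norm bounds involved are $g$-uniform.
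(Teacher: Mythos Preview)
Your approach is essentially the paper's: reduce to compactly supported $\phi$ and a rank-one $T$, use non-degeneracy to write one of the vectors as $\psi\cdot\zeta$ with $\psi\in C_c(X)$, and invoke properness so that $\phi\cdot g(\psi)$ vanishes for $g$ outside a compact set. However, your displayed identity is wrong: with $T=\xi\langle\eta,\cdot\rangle$ and $\eta=\psi\zeta$ one has
\[
\phi\, g(T)\;=\;(\phi\cdot u_g\xi)\,\big\langle g(\psi)\cdot u_g\zeta,\,\cdot\,\big\rangle,
\]
and the factor $g(\psi)$ lives on the \emph{bra} side. The claimed equality $(\phi\cdot u_g\xi)\langle g(\psi)u_g\zeta,\cdot\rangle=(\phi\cdot g(\psi)\cdot u_g\xi)\langle u_g\zeta,\cdot\rangle$ is false in general, so the bound $\|\phi\, g(T)\|\le \|\phi\cdot g(\psi)\|\,\|\xi\|\,\|\zeta\|$ does not follow from approximating $\eta$.

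The fix is trivial: approximate $\xi$ rather than $\eta$. If $\xi=\psi\zeta$ then $\phi\cdot u_g\xi=\phi\cdot g(\psi)\cdot u_g\zeta$ genuinely, and your bound goes through. Equivalently---and this is exactly how the paper argues---note that $\|\phi\, g(T)\|=\|g^{-1}(\phi)\,T\|$, reduce to showing $\|g(\phi)\,v\|\to 0$ for every $v\in H$, and then use non-degeneracy to take $v=\phi_0 v_0$ so that properness gives $g(\phi)\phi_0=0$ for large $g$. With this one-line correction your proof is complete and coincides with the paper's.
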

\begin{proof} It is enough to show that
\[
\lim_{g \to \infty}||g(\phi) T|| = 0
\]
for any rank one operator $T$ on $H$. This follows from
\[
\lim_{g \to \infty}||g(\phi) v|| = 0
\]
for any vector $v$ in $H$. Since $C_0(X)$ is non-degenerately represented on $H$, to show the last claim, we just need to show the claim for vectors of the form $v=\phi_0v_0$ for some $\phi_0$ in $C_0(X)$ and $v_0$ in $H$. The claim now follows from
\[
\lim_{g \to \infty}||g(\phi) \phi_0|| = 0.
\]
This holds since $X$ is a proper $G$-space. Indeed, if we assume $\phi$ and $\phi_0$ are compactly supported,  
\[
g(\phi) \phi_0 = 0
\]
for $g$ outside a sufficiently large compact subset of $G$.
\end{proof}

\begin{lemma} \label{lem_sum}  Let $H$ be a Hilbert space with a non-degenerate representation of the $\Calg$ $C_0(X)$ for a locally compact, Hausdorff space $X$. Let $\rho_j, \rho'_j$ be two families of real-valued functions indexed over $j$ in $J$ such that $\rho_j^2$, $\rho'^2_j$ are summable over $J$ in the strong operator topology on $H$. Then, for any uniformly bounded family of operators $T_j$ for $j$ in $J$, the sum
\[
\sum_{j\in J}\rho'_j T_j \rho_j
\]
converges in the strong operator topology on $H$ and defines an operator on $H$ with norm-bound $C\displaystyle\sup_{j\in J}||T_j||$ for some constant $C>0$ which is independent of the family $T_j$.
\end{lemma}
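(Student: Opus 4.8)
The plan is to reduce the statement to two elementary facts about strongly convergent sums of positive operators together with two applications of the Cauchy--Schwarz inequality; the representation of $C_0(X)$ enters only so that the $\rho_j,\rho'_j$, being bounded real-valued functions, act as bounded self-adjoint operators on $H$ through the canonical extension of the non-degenerate representation to the multiplier algebra $C_b(X)$, under which real-valued functions go to self-adjoint operators.

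First I would record the basic estimate. Set $R=\sum_{j\in J}\rho_j^2$ and $R'=\sum_{j\in J}\rho'^2_j$, the strong limits, which exist by hypothesis; since the partial sums over finite subsets of $J$ form increasing nets of positive operators, $R$ and $R'$ are bounded positive operators, and $\sum_{j\in F}\rho_j^2\le R$ for every finite $F\subseteq J$ (strong limits preserve the operator order), so that for every $v\in H$
\[
\sum_{j\in F}\|\rho_jv\|^2=\Big\langle\sum_{j\in F}\rho_j^2\,v,\,v\Big\rangle\le\langle Rv,v\rangle\le\|R\|\,\|v\|^2,
\]
and likewise $\sum_{j\in F}\|\rho'_jv\|^2\le\|R'\|\,\|v\|^2$. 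In particular the families $(\|\rho_jv\|^2)_{j\in J}$ and $(\|\rho'_jv\|^2)_{j\in J}$ are summable for each $v$.

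Next I would bound the finite partial sums $S_F:=\sum_{j\in F}\rho'_jT_j\rho_j$, $F\subseteq J$ finite. Using self-adjointness of $\rho_j,\rho'_j$ and Cauchy--Schwarz twice, for $v,w\in H$,
\[
|\langle S_Fv,w\rangle|\le\sum_{j\in F}\|T_j\|\,\|\rho_jv\|\,\|\rho'_jw\|\le\Big(\sup_{j\in J}\|T_j\|\Big)\Big(\sum_{j\in F}\|\rho_jv\|^2\Big)^{1/2}\Big(\sum_{j\in F}\|\rho'_jw\|^2\Big)^{1/2},
\]
which by the previous step is at most $\|R\|^{1/2}\|R'\|^{1/2}\big(\sup_{j\in J}\|T_j\|\big)\|v\|\,\|w\|$. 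Hence $\|S_F\|\le C\sup_{j\in J}\|T_j\|$ with $C=\|R\|^{1/2}\|R'\|^{1/2}$, a bound uniform in $F$ and independent of the family $(T_j)$. Finally, for fixed $v$ and finite sets $F\subseteq F'$ the same estimate applied to $F'\setminus F$ gives
\[
\|(S_{F'}-S_F)v\|\le\|R'\|^{1/2}\Big(\sup_{j\in J}\|T_j\|\Big)\Big(\sum_{j\in F'\setminus F}\|\rho_jv\|^2\Big)^{1/2};
\]
since $(\|\rho_jv\|^2)_{j\in J}$ is summable the right-hand side tends to $0$ as $F$ exhausts $J$, so the net $(S_Fv)_F$ is Cauchy and converges. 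Writing $Sv$ for its limit defines a linear operator $S$ on $H$, and letting $F\to J$ in $\|S_Fv\|\le C\sup_j\|T_j\|\,\|v\|$ shows $S$ is bounded with $\|S\|\le C\sup_{j\in J}\|T_j\|$; this is exactly the asserted strong convergence and norm bound.

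The argument is essentially formal, so I do not expect a genuine obstacle. The only points requiring a little care are the inequality $\sum_{j\in F}\langle\rho_j^2v,v\rangle\le\langle Rv,v\rangle$ --- legitimate precisely because the relevant nets of positive operators are monotone and strongly convergent --- and the bookkeeping with nets over the possibly uncountable index set $J$ in place of ordinary series; everything else is two invocations of Cauchy--Schwarz.
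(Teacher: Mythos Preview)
Your proof is correct and is essentially the same argument as the paper's, just unpacked: the paper defines bounded operators $V,V'\colon H\to\bigoplus_{j\in J}H$ by $V\xi=(\rho_j\xi)_j$ and $V'\xi=(\rho'_j\xi)_j$ and observes that $\sum_j\rho'_jT_j\rho_j=V'^{\ast}(T_j)_jV$, whereas you verify by hand the estimates $\sum_j\|\rho_jv\|^2\le\|R\|\,\|v\|^2$ (i.e.\ $\|V\|^2\le\|R\|$) and then apply Cauchy--Schwarz to reach the same constant $C=\|R\|^{1/2}\|R'\|^{1/2}=\|V\|\,\|V'\|$. The paper's factorization is a bit slicker, while your version is more explicit about the strong convergence of the partial sums; there is no substantive difference.
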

\begin{proof} The square-summability of $\rho_j, \rho'_j$ guarantees the following map defines a bounded operator $V$ from $H$ to $\displaystyle\bigoplus_{j\in J}H$:
\[
V\colon \xi \mapsto (\rho_j\xi)_{j\in J} \,\,\, \text{with adjoint} \,\,\, V^\ast\colon (\xi_j)_{j\in J} \to \sum_{j\in J}\rho_j\xi_j,
\]
and similarly $V'$ using $\rho_j'$. The assertion is immediate by observing
\[
\sum_{j\in J}\rho'_j T_j \rho_j = V'^\ast(T_j)_{j\in J} V.
\]
\end{proof}

\begin{lemma}\label{lem_nondegen_int}  Let $H$ be a $G$-Hilbert space with a non-degenerate representation of the $G$-$\Calg$ $C_0(X)$ for a locally compact, Hausdorff, proper $G$-space $X$. Let $\phi_0$, $\phi_1$ be compactly supported real-valued functions on $X$. Let $(T_g)_{g\in G}$ be a uniformly bounded family of operators on $H$ which defines a bounded operator on $L^2(G, H)$. Then, the map
\[
v \mapsto \int_{G}g(\phi_0)T_gg(\phi_1)v d\mu_G(g)
\]
on $H$ define the bounded operator $\int_{G}g(\phi_0)T_gg(\phi_1) d\mu_G(g)$ on $H$, and we have
\[
||\int_{G}g(\phi_0)T_gg(\phi_1) d\mu_G(g)|| \leq C\sup_{g\in G}||T_g||
\]
for some constant $C$ which only depends on $\phi_0$, $\phi_1$.
\end{lemma}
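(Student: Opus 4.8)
The plan is to prove this exactly as Lemma~\ref{lem_sum}, with the summation over $J$ replaced by integration over $G$ against Haar measure: I want to realise the prospective operator $\int_{G}g(\phi_0)T_g g(\phi_1)\,d\mu_G(g)$ as a product $V_0^{\ast}\,\mathcal{T}\,V_1$ of three bounded operators passing through $L^2(G,H)$. Here $\mathcal{T}$ is the ``diagonal'' operator attached to the family $(T_g)$, $(\mathcal{T}\xi)(g)=T_g\,\xi(g)$, which by hypothesis is bounded on $L^2(G,H)$ with $\|\mathcal{T}\|\le\sup_{g\in G}\|T_g\|$; and $V_i\colon H\to L^2(G,H)$ is the operator, defined at first on the dense subspace $C_c(X)H$ of vectors $\psi v$ (dense since the representation of $C_0(X)$ is non-degenerate), given by $(V_i\xi)(g)=g(\phi_i)\,\xi$.

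First I would verify that each $V_i$ extends to a bounded operator on $H$. For $\xi=\psi v\in C_c(X)H$ the function $g\mapsto g(\phi_i)\xi$ is continuous and, by properness of $X$ together with compactness of $\operatorname{supp}\phi_i$ and $\operatorname{supp}\psi$ (as in the proof of Lemma~\ref{lem_nondegen}), vanishes outside a compact subset of $G$, hence lies in $L^2(G,H)$. Using that $\phi_i$ is real-valued, so that $g(\phi_i)$ is self-adjoint and $g(\phi_i)^2=g(\phi_i^2)$, one computes
\[
\|V_i\xi\|_{L^2(G,H)}^2=\int_G\big\langle \xi,\,g(\phi_i^2)\,\xi\big\rangle\,d\mu_G(g)=\Big\langle \xi,\ \Big(\int_G g(\phi_i^2)\,d\mu_G(g)\Big)\xi\Big\rangle ,
\]
the integral on the right being a strongly convergent integral of a compactly-supported-in-$g$ family. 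Now $\int_G g(\phi_i^2)\,d\mu_G(g)$ is multiplication by the continuous, $G$-invariant, and \emph{bounded} function $x\mapsto\int_G\phi_i^2(g^{-1}x)\,d\mu_G(g)$ on $X$, properness of the $G$-space being what makes it bounded. Hence $\|V_i\|\le C_i$ for a constant $C_i$ depending only on $\phi_i$, and $V_i$ extends to $H$; taking adjoints one gets $V_i^{\ast}\zeta=\int_G g(\phi_i)\,\zeta(g)\,d\mu_G(g)$.

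Then I would match $V_0^{\ast}\mathcal{T}V_1$ with the operator in the statement. For $v\in C_c(X)H$ the vector $V_1v=(g\mapsto g(\phi_1)v)$ is supported on some compact $K_0\subseteq G$, so $V_0^{\ast}\mathcal{T}V_1v=\int_{K_0}g(\phi_0)T_g g(\phi_1)v\,d\mu_G(g)$ is an ordinary Bochner integral and coincides with $\int_G g(\phi_0)T_g g(\phi_1)v\,d\mu_G(g)$. Since $V_0^{\ast}\mathcal{T}V_1$ is bounded with $\|V_0^{\ast}\mathcal{T}V_1\|\le C_0\,C_1\,\sup_{g}\|T_g\|$, it \emph{is} the asserted bounded operator, with $C:=C_0C_1$ depending only on $\phi_0$ and $\phi_1$. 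For a general vector $v\in H$ the expression $\int_G g(\phi_0)T_g g(\phi_1)v\,d\mu_G(g)$ is to be read through this factorisation — equivalently, as a weak integral, whose convergence is precisely the Cauchy--Schwarz bound $\big|\langle \int_G g(\phi_0)T_g g(\phi_1)v\,d\mu_G(g),\,w\rangle\big|=\big|\langle \mathcal{T}V_1v,\,V_0w\rangle\big|\le \sup_g\|T_g\|\,\|V_1v\|\,\|V_0w\|$.

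The step I expect to carry the real weight — the ``hard part'' — is the boundedness of the factors $V_i$: it is here that properness of $X$ enters, to see that the $G$-average $\int_G g(\phi_i^2)\,d\mu_G(g)$ is a \emph{bounded} multiplier, and it is this that lets one make sense of $\int_G g(\phi_0)T_g g(\phi_1)v\,d\mu_G(g)$ at all, since for a general $v$ this integrand is not Bochner integrable ($g\mapsto\|g(\phi_1)v\|$ being only square-integrable, not integrable, over $G$). Everything else is the continuous-group bookkeeping analogue of the argument in Lemma~\ref{lem_sum}.
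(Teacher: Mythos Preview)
Your proposal is correct and follows essentially the same approach as the paper: factor the integral as $V_0^{\ast}\,\mathcal{T}\,V_1$ with $V_i\colon H\to L^2(G,H)$ given by $\xi\mapsto(g(\phi_i)\xi)_{g\in G}$ and $\mathcal{T}=(T_g)_{g\in G}$, exactly as in Lemma~\ref{lem_sum}. The paper's own proof is a terse two-line sketch of precisely this factorisation; you have supplied the details it omits, in particular the reason $V_i$ is bounded (that $\int_G g(\phi_i^2)\,d\mu_G(g)$ is a bounded $G$-invariant function on $X$, by properness).
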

\begin{proof} This can be shown quite analogously to the proof of Lemma \ref{lem_sum}. This time, we use the following bounded operator $V$ from $H$ to $L^2(G, H)$:
\[
V\colon \xi \mapsto (g(\phi_1)\xi)_{g\in G} \,\,\, \text{with adjoint} \,\,\, V^\ast\colon (\xi_g)_{g\in G} \to \int_{G}g(\phi_1)\xi_gd\mu_G(g),
\]
and similarly $V'$ using $\phi_0$.  The assertion is immediate by observing
\[
\int_{G}g(\phi_0)T_gg(\phi_1) d\mu_G(g) = V'^\ast(T_g)_{g\in G} V.
\]
\end{proof}

\begin{lemma}\label{lem_comp_int} Let $H$ be a $G$-Hilbert space with a non-degenerate representation of the $G$-$\Calg$ $C_0(X)$ for a locally compact, Hausdorff, proper $G$-space $X$. For any compactly-supported functions $\phi_0$ and $\phi_1$ on $X$ and a compact operator $T$, the operator $
\int_{g \in G}g(\phi_0)Tg(\phi_1)d\mu_G(g)$ on $H$ is a compact operator with norm-bound $C||T||$ for some constant $C>0$ which is independent of $T$.
\end{lemma}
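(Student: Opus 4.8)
The plan is to prove the two assertions separately: that the integral defines a bounded operator with the stated norm bound, and that this operator is compact. By linearity in $\phi_0$ and in $\phi_1$ (splitting into real and imaginary parts) we may assume from the outset that $\phi_0$ and $\phi_1$ are real-valued, at the cost of enlarging the constant $C$.

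For the norm bound, I would apply Lemma~\ref{lem_nondegen_int} to the constant family $T_g\equiv T$. This family acts on $L^2(G,H)$ as the ampliation $1\otimes T$, which is bounded with norm $\|T\|$; hence Lemma~\ref{lem_nondegen_int} already yields that $\int_{g\in G}g(\phi_0)Tg(\phi_1)\,d\mu_G(g)$ is a well-defined bounded operator with $\bigl\|\int_{g\in G}g(\phi_0)Tg(\phi_1)\,d\mu_G(g)\bigr\|\le C\|T\|$, where $C$ depends only on $\phi_0,\phi_1$. In particular the map $\Phi\colon T\mapsto \int_{g\in G}g(\phi_0)Tg(\phi_1)\,d\mu_G(g)$ is a bounded linear map on the algebra $B(H)$ of all bounded operators on $H$.

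For the compactness, I would use the boundedness of $\Phi$ to reduce to a dense set of $T$'s: the set $\{T\in K(H): \Phi(T)\in K(H)\}$ is the intersection of $K(H)$ with the preimage under $\Phi$ of the closed ideal $K(H)$, hence a norm-closed subspace of $K(H)$. Since the representation of $C_0(X)$ is non-degenerate and $C_c(X)$ is dense in $C_0(X)$, the vectors $\psi w$ with $\psi\in C_c(X)$, $w\in H$ are total in $H$, so the rank-one operators $\theta_{a,b}\colon\xi\mapsto\langle\xi,b\rangle a$ with $a=\psi_1w_0$, $b=\psi_0v_0$ ($\psi_0,\psi_1$ compactly supported, $v_0,w_0\in H$) have dense linear span in $K(H)$. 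Thus it suffices to treat $T=\theta_{\psi_1w_0,\ \psi_0v_0}$. For such $T$ one computes $g(\phi_0)\,T\,g(\phi_1)=\theta_{\,g(\phi_0)\psi_1w_0,\ g(\phi_1)\psi_0v_0}$. Because $\phi_0,\phi_1,\psi_0,\psi_1$ are all compactly supported and $X$ is a proper $G$-space, there is a compact set $C\subset G$ with $g(\phi_0)\psi_1=0$ for $g\notin C$, so $g(\phi_0)Tg(\phi_1)=0$ off $C$; and $g\mapsto g(\phi_0)\psi_1w_0$ and $g\mapsto g(\phi_1)\psi_0v_0$ are norm-continuous $H$-valued maps by strong continuity of the $G$-action on $C_0(X)$. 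Hence $g\mapsto g(\phi_0)Tg(\phi_1)$ is a norm-continuous, compactly supported $K(H)$-valued function; its Bochner integral over $C$ lies in $K(H)$, and it agrees with $\Phi(T)$, giving $\Phi(T)\in K(H)$.

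The only point requiring a little care is the last identification: the operator produced by Lemma~\ref{lem_nondegen_int} is defined as a weak (equivalently strong) operator integral, and one must check it coincides with the norm-convergent Bochner integral over $C$. This is routine — a weak operator integral is unique when it exists, and bounded linear functionals commute with Bochner integration — so I do not expect any real obstacle here; the substance of the argument is simply that properness of $X$ confines the integrand to a compact subset of $G$, while non-degeneracy of the $C_0(X)$-action furnishes enough compactly supported vectors to reduce to that case.
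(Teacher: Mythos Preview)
Your argument is correct. The norm bound is obtained exactly as in the paper, by feeding the constant family $T_g\equiv T$ into Lemma~\ref{lem_nondegen_int}. For compactness, however, you and the paper proceed differently. You pass to a dense class of compact operators---rank-one operators built from vectors of the form $\psi w$ with $\psi\in C_c(X)$---and for such $T$ the integrand $g\mapsto g(\phi_0)Tg(\phi_1)$ is literally compactly supported on $G$, so the Bochner integral visibly lies in $K(H)$. The paper instead treats an arbitrary compact $T$ in one stroke: choosing $\chi\in C_c(X)$ equal to $1$ on the support of $\phi_1$, it rewrites the integrand as $g(\phi_0)\bigl(Tg(\chi)\bigr)g(\phi_1)$ and invokes Lemma~\ref{lem_nondegen} to get $\|Tg(\chi)\|\to 0$; the tail over $G\setminus K$ is then small in norm by the estimate of Lemma~\ref{lem_nondegen_int}, while the integral over any compact $K$ is compact. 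Your route is slightly longer but entirely self-contained; the paper's $g(\chi)$-trick is shorter and avoids the density reduction, at the price of one extra appeal to Lemma~\ref{lem_nondegen}. Both arguments ultimately hinge on the same two ingredients: properness of $X$ to localize in $G$, and the uniform norm bound from Lemma~\ref{lem_nondegen_int} to control limits.
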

\begin{proof} Take a compactly-supported function $\chi$ on $X$ taking value $1$ on the support of $\phi_1$. Rewrite the operator as
\[
\int_{g \in G}g(\phi_0)Tg(\phi_1)d\mu_G(g) = \int_{g \in G}g(\phi_0)(Tg(\chi))g(\phi_1)d\mu_G(g).
\]
Combined with the observation that the norm $||Tg(\chi)||$ vanishes as $g$ goes to infinity (see Lemma \ref{lem_nondegen}), the claim follows from Lemma \ref{lem_nondegen_int} and that the operator 
$\int_{g \in K}g(\phi_0)Tg(\phi_1)d\mu_G(g)$ is a compact operator for any compact subset $K$ of $G$.
\end{proof}

\begin{lemma} \label{lem_property_alg} Let $H$ be a $G$-Hilbert space with a non-degenerate representation of the $G$-$\Calg$ $C_0(X)$ for a locally compact, Hausdorff, co-compact, proper $G$-space $X$. For a cut-off function $c$ on $X$, the operators $S$ on $H$ satisfying 
\[
(g \mapsto [g(\phi), S]) \in C_0(G, K(H)) \,\,\, \text{for any $\phi$ in $C_0(X)$},
\]
\[
\int_{g \in G} g(c)S g(c)d{\mu_G}(g) - S \in K(H),
\]
form a $G$-$C^\ast$-algebra of operators on $H$ containing the compact operators on $H$.
\end{lemma}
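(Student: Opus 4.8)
The plan is to show that the set $\mathcal{A}$ of operators $S$ on $H$ satisfying the two displayed conditions --- abbreviate them (I) ``$g\mapsto[g(\phi),S]$ lies in $C_0(G,K(H))$ for every $\phi\in C_0(X)$'' and (II) ``$\Psi(S)-S\in K(H)$'', where $\Psi(S):=\int_{g\in G}g(c)\,S\,g(c)\,d\mu_G(g)$ --- is a norm-closed, $G$-invariant $\ast$-subalgebra of $B(H)$ containing $K(H)$. First I would observe that $\Psi$ is a well-defined, contractive, completely positive linear map on $B(H)$: it is bounded by Lemma~\ref{lem_nondegen_int} (take $\phi_0=\phi_1=c$ and $T_g\equiv S$), and in fact $\Psi(S)=V^\ast(S\otimes 1)V$ for the isometry $V\colon H\to L^2(G,H)$, $V\xi=(g(c)\xi)_g$ (an isometry since $\int_{g\in G}g(c)^2\,d\mu_G(g)=1$). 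Then I would dispatch the routine properties quickly. Containment of $K(H)$: for compact $S$, the $[g(\phi),S]$ are compact and vanish in norm at infinity by Lemma~\ref{lem_nondegen}, so (I) holds, and $\Psi(S)\in K(H)$ by Lemma~\ref{lem_comp_int}, so (II) holds. Norm-closedness: (I) is stable under uniform limits, since $S\mapsto(g\mapsto[g(\phi),S])$ maps $B(H)$ boundedly into $C_b(G,K(H))$ in which $C_0(G,K(H))$ is closed; (II) says $S\in\ker\bigl(q\circ(\Psi-\mathrm{id})\bigr)$ for the quotient $q\colon B(H)\to Q(H):=B(H)/K(H)$, the kernel of a bounded map. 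Closure under $\ast$: for (I) reduce to real $\phi$, where $[g(\phi),S^\ast]=[g(\phi),S]^\ast$; and $\Psi(S^\ast)=\Psi(S)^\ast$ since $c$ is real. Closure under sums and scalars is trivial. $G$-invariance: $[g(\phi),h(S)]=u_h\,[(h^{-1}g)(\phi),S]\,u_h^\ast$ is still a $C_0(G,K(H))$-valued function of $g$, and $\Psi(h(S))=h(\Psi(S))$ by left-invariance of $\mu_G$. Finally, condition (I) is closed under products by the Leibniz identity $[g(\phi),SS']=[g(\phi),S]S'+S[g(\phi),S']$.

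The real content is that condition (II) is closed under products, i.e.\ $\Psi(SS')\equiv SS'\pmod{K(H)}$ for $S,S'\in\mathcal{A}$; here condition (II) for $S$ and $S'$ must genuinely be used, because $\Psi$ is not multiplicative modulo $K(H)$ in general. The first step is a short calculation: from $g(c)\,S\,g(c)=g(c)^2S-g(c)[g(c),S]$ one gets $\Psi(S)-S=-\int_{g\in G}g(c)[g(c),S]\,d\mu_G(g)$, and then the Leibniz rule together with condition (II) (used to absorb the compact terms $(\Psi(S)-S)S'$ and $S(\Psi(S')-S')$) gives
\[
\Psi(SS')-SS'\ \equiv\ -\int_{g\in G}[g(c),S]\,[g(c),S']\,d\mu_G(g)\ \pmod{K(H)}.
\]
Introducing the bounded operators $W_S\colon H\to L^2(G,H)$, $W_S\xi=([g(c),S]\xi)_g$ (with $\|W_S\|\le 2\|S\|$), this integral equals $W_{S^\ast}^\ast W_{S'}$, so the whole matter reduces to the claim: \emph{for every $S\in\mathcal{A}$ the operator $W_S$ is compact}, equivalently $\int_{g\in G}[g(c),S]^\ast[g(c),S]\,d\mu_G(g)\in K(H)$. (Granting the claim, one sees its role cleanly: $\Psi$ descends to a unital completely positive map $\bar\Psi$ on $Q(H)$, condition (II) says $q(S)$ is fixed by $\bar\Psi$, and the claim says precisely that $q(S)$ lies in the multiplicative domain of $\bar\Psi$ --- a $C^\ast$-subalgebra of $Q(H)$ on which $\bar\Psi$ is a homomorphism --- so closure of $\mathcal{A}$ under products follows; in fact the claim is equivalent to the assertion of the lemma.)

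Proving the claim is the main obstacle. Since $W_S^\ast W_S$ is compact if and only if $W_S$ is, it suffices to show $\int_{g\in G}[g(c),S]^\ast[g(c),S]\,d\mu_G(g)\in K(H)$. I would split this over a large compact $L\subseteq G$ and its complement: the part over $L$ is a Bochner integral of a norm-continuous $K(H)$-valued function over a compact set, hence compact. For the tail I expect to need condition (II) in the form $\int_{g\in G}g(c)[g(c),S]\,d\mu_G(g)=S-\Psi(S)\in K(H)$ together with condition (I) --- the vanishing at infinity of $[g(\phi),S]$ for \emph{all} $\phi\in C_0(X)$, including bump functions $\chi\in C_c(X)$: inserting such $\chi$ with $\chi\equiv 1$ on $\mathrm{supp}\,c$ --- so that $g(c)g(\chi)=g(\chi)g(c)=g(c)$ and, crucially, $g(c)[g(\chi),S]g(c)=0$ --- one can rewrite $[g(c),S]^\ast[g(c),S]$, modulo an operator whose integral over $G$ is already compact, as a finite sum of ``sandwich'' terms $g(\chi_0)\,X_g\,g(\chi_1)$ with $\chi_0,\chi_1\in C_c(X)$ and $(X_g)$ a uniformly bounded, norm-continuous family of compact operators with $\|X_g\|\to 0$ at infinity; the tail of each such term then has norm at most $C\sup_{g\notin L}\|X_g\|\to 0$ by Lemma~\ref{lem_nondegen_int}, so the operator is a norm-limit of compacts and hence compact. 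The delicate point, which I expect to be the crux, is arranging this bump-function reduction so that the leftover terms close up and seeing exactly how conditions (I) and (II) interact there; the remainder of the proof is formal.
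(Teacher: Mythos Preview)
Your routine verifications (compacts, closure under $\ast$, sums, norm, $G$-action, and products for condition~(I)) are correct, and the multiplicative-domain reformulation via $\bar\Psi$ on $Q(H)$ is a clean way to organise the product step. However, there is a genuine gap at the point you yourself flag as the crux: you never actually prove that $W_S$ is compact, and the bump-function reduction you sketch does not close up. If you insert $\chi\in C_c(X)$ with $\chi\equiv 1$ on $\mathrm{supp}\,c$ and use the identity $[g(c),S]=[g(c),S]\,g(\chi)+g(c)\,[g(\chi),S]$ (and its adjoint), the expansion of $[g(c),S]^\ast[g(c),S]$ always produces terms such as $[g(\chi),S]^\ast g(c)^2[g(\chi),S]$ or $g(\chi)[g(c),S]^\ast g(c)[g(\chi),S]$ that are not sandwiched on both sides by $g(\phi_0),g(\phi_1)$; replacing $\chi$ by a larger bump $\chi'$ simply reproduces the same shape with $\chi'$ in place of $\chi$. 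Moreover, your claim is essentially circular: expanding directly one finds, modulo compacts (using only condition~(II) for $S$ and $S^\ast$),
\[
\int_G [g(c),S]^\ast[g(c),S]\,d\mu_G(g)\ \equiv\ \Psi(S^\ast S)-S^\ast S,
\]
so ``$W_S$ compact'' is exactly condition~(II) for $S^\ast S$, which is what you are trying to establish. An independent argument is needed.

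The paper avoids this circularity by not isolating a single commutator integral at all. Instead it computes $\Psi(S_1)\Psi(S_2)$ as a double integral and uses the \emph{finite-propagation} feature of the cut-off: since $c\cdot h(c)=0$ unless $h$ lies in a fixed compact $G_0\subset G$, one may write (after $g'=gh$)
\[
\Psi(S_1)\Psi(S_2)=\int_{G_0}\int_G g(c)\,S_1\,g(c\cdot h(c))\,S_2\,g(h(c))\,d\mu_G(g)\,dh.
\]
Commuting $g(c\cdot h(c))$ past $S_2$ introduces $S_1[g(c\cdot h(c)),S_2]$, which by condition~(I) is compact with norm $\to 0$ as $g\to\infty$, and is now genuinely sandwiched between $g(c)$ and $g(h(c))$; Lemmas~\ref{lem_nondegen_int} and~\ref{lem_comp_int} then make its $G$-integral compact, uniformly over the compact parameter $h\in G_0$. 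The remaining term recombines to $\Psi(S_1S_2)$ after integrating $\int_{G_0}h(c)^2\,dh=1$ against $c$. This shows $\Psi(S_1)\Psi(S_2)\equiv\Psi(S_1S_2)$ modulo $K(H)$ using only condition~(I) for $S_2$, and combining with condition~(II) for $S_1,S_2$ gives $\Psi(S_1S_2)\equiv S_1S_2$. (In particular this argument also \emph{proves} your $W_S$ is compact, but via the $G_0$-collapse, not via iterated bump functions.) The missing ingredient in your sketch is precisely this use of the compact support of $c$ on a proper $G$-space to reduce the second integration to a compact set.
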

\begin{proof} That the operators on $H$ satisfying the first condition form a $G$-$C^\ast$-algebra is quite straightforward. That all compact operators on $H$ satisfies the two conditions follows from Lemma \ref{lem_nondegen}, \ref{lem_comp_int}. That the second condition is closed under addition, adjoint and translation by an element in $G$ is also immediate. Let $S_1$, $S_2$ be operators satisfying the two conditions. We have
\[
\left(\int_{g \in G} g(c)S_1 g(c)d{\mu_G}(g)\right)\left(\int_{g \in G} g(c)S_2 g(c)d{\mu_G}(g)\right) - S_1S_2 \in K(H).
\]
Let $G_0$ be a compact subset of $G$ so that $h(c)c=0$ unless $h\in G_0$. The first term in the right above expression is equal to 
\[
\int_{(g, g') \in G\times G_0} g(c)S_1 g(c)(gg')(c)S_2 (gg')(c)d{\mu_{G\times G}}.
\]
Similarly to the proof of Lemma \ref{lem_nondegen_int}, \ref{lem_comp_int}, we can show that the difference between this term and
\[
\int_{(g, g') \in G\times G_0} g(c)S_1 S_2g(c)(gg')(c)(gg')(c)d{\mu_{G\times G}} = \int_{g \in G} g(c)S_1 S_2g(c)d{\mu_G}(g)
\]
is compact, showing that the second condition holds for the product $S_1S_2$. Lastly, the second condition is closed under norm-limit since the map $S\mapsto \int_{g \in G} g(c)S g(c)d{\mu_G}(g)$ is continuous in norm, which follows from Lemma \ref{lem_nondegen_int}.
\end{proof}
 
For $\Calgs$ $J$ and $A$, we say that $A$ is non-degenerately represented on $J$ if there is a $\ast$-homomorphism from $A$ to the multiplier algebra $M(J)$ of $J$ such that $AJ$ is dense in $J$. Lemma \ref{lem_nondegen} to Lemma \ref{lem_property_alg} have obvious generalizations by replacing a $G$-Hilbert space $H$ to a $G$-Hilbert $C^\ast$-module. A $G$-$\Calg$ $P$ is a proper $G$-$\Calg$ if there is a non-degenerate $G$-equivariant $\ast$-homomorphism from $C_0(E)$ to the center $Z(M(P))$ of the multiplier algebra $M(P)$ of $P$. For a proper $G$-$\Calg$ $P$, we write $P_c=C_c(E)P$, the dense subalgebra of compactly supported elements. For a subset $\Delta$ and a subalgebra $A$ in an algebra $B$, we say that $\Delta$ derives $A$ if $[\Delta, A]\subset A$. The following is a modified version of Kasparov's Technical Theorem (see \cite{Hig87}, \cite{Ka88}) (all algebras and commutators are graded).

\begin{theorem} \label{thm_tech}Let $J$ be a $\sigma$-unital $G$-$\Calg$ and $P$ be a proper $G$-$\Calg$ which is $G$-equivariantly and non-degenerately represented on $J$. Let $A_1$ and $A_2$ be $\sigma$-unital subalgebras in $M(J)$ such that $A_1$ is a $G$-subalgebra containing $P$ and $A_2$ is an arbitrary subalgebra (without the group action). Assume that $PA_1$ is dense in $A_1$. Let $\Delta, \Delta'$ be subsets in $M(J)$ which are separable in the norm topology such that $\Delta$ derives $A_1$ and $[\Delta', A_1]\subset J$. Assume that $A_1\cdot A_2\subset J$. Let $c$ be a cut-off function on $E$. Then, there are positive elements $M_1, M_2$ in $M(J)$ of degree $0$ such that $M_1 + M_2 =1$ and for $i=1,2$,
\begin{enumerate}[(I)]
\item $g(M_i)-M_i \in J$ for any $g$ in $G$,
\item $M_1x \in J$ for any $x$ in $A_1$,
\item $M_2x \in J$ for any $x$ in $A_2$,
\item $[M_i, x] \in J$ for any $x$ in $\Delta$,
\item the function $(g \mapsto [M_i, g(\phi)])$ belongs to $C_0(G, J)$ for any $\phi$ in $C_0(E)$,
\item the function $(g \mapsto M_2[x, g(\phi)])$ belongs to $C_0(G, J)$ for any $x$ in $\Delta'$ and for any $\phi$ in $C_0(E)$,
\item $\int_{g \in G} g(c)M_i g(c)d{\mu_G}(g) - M_i \in J$.
\end{enumerate}
\end{theorem}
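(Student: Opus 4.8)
This is the $G$-equivariant Kasparov Technical Theorem (\cite{Ka88}; see also \cite{Hig87}) augmented by the three conclusions (V), (VI), (VII) that involve the proper, co-compact $G$-space $E$. The plan is to run the usual quasi-central approximate unit construction, but with the approximate units chosen to be, in addition, asymptotically $G$-invariant and supported in compact subsets of $E$; the properness of $E$ is what converts these two extra features into the conditions phrased over the non-compact group $G$. It is enough to produce a single positive degree $0$ operator $M \in M(J)$ with $0 \le M \le 1$: taking $M_2 := M$ and $M_1 := 1 - M$, the conditions (I), (IV), (V) for $i = 1$ reduce to the case $i = 2$ since $g(1-M) - (1-M) = -(g(M) - M)$ and $[1-M,x] = -[M,x]$; (VII) for $i = 1$ reduces to $i = 2$ using $\int_{g \in G} g(c)^2\, d\mu_G(g) = 1$; and (II), (III) become the requirements that $(1-M)A_1 \subseteq J$ and that $M$ lie in the hereditary subalgebra of $M(J)$ generated by $A_1$ and $J$ (the latter forces $M A_2 \subseteq J$ because $(A_1 + J)A_2 \subseteq J$). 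So we need $M$ with: $(1-M)A_1 \subseteq J$; $M$ in the hereditary subalgebra generated by $A_1$ and $J$; $[M, \Delta] \subseteq J$; $g(M) - M \in J$ for all $g$; $g \mapsto [M, g(\phi)] \in C_0(G, J)$ for all $\phi \in C_0(E)$; $g \mapsto M[x, g(\phi)] \in C_0(G, J)$ for all $x \in \Delta'$ and $\phi \in C_0(E)$; and $\int_{g \in G} g(c) M g(c)\, d\mu_G(g) - M \in J$.

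To build $M$, first choose an increasing approximate unit $(u_n)$ for the $\sigma$-unital ideal $A_1 + J$ of $C^*(A_1 \cup J \cup \Delta \cup \Delta' \cup C_0(E))$ --- note that $C_0(E)$ is separable since $E$ is second countable, and that $A_1 + J$ is an ideal there because $[\Delta, A_1]\subseteq A_1$, $[C_0(E), A_1] \subseteq A_1$, $[\Delta', A_1] \subseteq J$ and $J \triangleleft M(J)$. Arrange that $(u_n)$ is (a) quasi-central for the separable set $\Delta \cup \Delta' \cup C_0(E)$; (b) asymptotically $G$-invariant, $\|h(u_n) - u_n\| \to 0$ uniformly for $h$ in compact subsets of $G$, obtained by replacing $u_n$ with $\int_{h \in G} \chi_n(h)\, h(u_n)\, d\mu_G(h)$ for a suitable approximate identity $(\chi_n) \subseteq C_c(G)_+$ of total mass one with shrinking supports, using norm-continuity of the $G$-actions; and (c) built from elements in $C_c(E)(A_1 + J)C_c(E)$, which is possible because $P \subseteq A_1$ is non-degenerate on $J$ and $PA_1$ is dense in $A_1$, so $C_0(E)$ acts non-degenerately on $A_1$ and on $J$. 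Choose similarly an increasing approximate unit $(e_m)$ for $J$ with the analogues of (b), (c) and quasi-central for $\Delta \cup \Delta' \cup C_0(E) \cup A_1$. Then form $M$ by the interpolation series in the proof of Kasparov's technical theorem, built from $(u_n)$ and $(e_m)$, with the two speeds fixed by a diagonal argument running simultaneously over a countable dense set of test elements $\phi \in C_0(E)$, $x \in \Delta \cup \Delta'$, generators of $A_1$ and $A_2$, and the seven requirements above; positivity, $0 \le M \le 1$, and degree $0$ are automatic from the ingredients, and each term of the series lies in $J$ because it is sandwiched by the $e_m \in J$.

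It remains to check (V)--(VII), where the two extra features enter as follows. Fix $\phi \in C_c(E)$ and a single term of the series (built from some $u_n$, $e_m$): its commutator with $g(\phi)$ is \emph{exactly zero} once $g$ leaves a compact subset of $G$, because $g\cdot\mathrm{supp}(\phi)$ is then disjoint from the ($E$-)support of that term; and for $g$ in a compact set it is small, since e.g.\ $[u_n, g(\phi)] = u_g\,[g^{-1}(u_n), \phi]\, u_g^*$, conjugation by $u_g$ is isometric, and $g^{-1}(u_n)$ is close to the quasi-central $u_n$ by (b). The diagonal choice of speeds promotes this to $g \mapsto [M, g(\phi)] \in C_0(G, J)$, which is (V); the same mechanism, together with $[\Delta', A_1] \subseteq J$, density of $PA_1$, and $\|j\,g(\phi)\| \to 0$ as $g \to \infty$ for $j \in J$ (proved exactly as Lemma \ref{lem_nondegen}), gives (VI). For (VII), use $\int_{g \in G} g(c)^2\, d\mu_G(g) = 1$ to rewrite
\[
\int_{g \in G} g(c)\, M\, g(c)\, d\mu_G(g) - M \;=\; \int_{g \in G} g(c)\,[M, g(c)]\, d\mu_G(g);
\]
term by term the integrand is compactly supported in $G$, so the integral converges and is norm-bounded by Lemma \ref{lem_nondegen_int} (or Lemma \ref{lem_comp_int}), and by (V) it lies in $J$.

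The principal obstacle is to carry out the single diagonal/interpolation step so that all of (I)--(VII) hold at once --- in particular, to reconcile asymptotic $G$-invariance, which is only uniform over \emph{compact} subsets of $G$, with the conditions (V)--(VII), which quantify over the whole non-compact group $G$. The feature that makes this possible is exactly the properness of $E$: translates of a compactly supported function are eventually supported disjointly, so the genuinely non-compact part of (V)--(VII) vanishes identically and only estimates for $g$ in compact sets remain, where (b) and quasi-centrality do the job. Making this precise uniformly over the infinitely many terms of the series is the technical heart of the proof.
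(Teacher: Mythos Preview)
Your strategy matches the paper's: two nested quasi-central approximate units, the outer one compactly supported over $E$ so that properness turns this into compactly-supported-in-$G$ conditions and only estimates over compact subsets of $G$ remain for the diagonal to handle. Two technical points, however, separate your sketch from a working proof, and the paper resolves both by a single choice you did not make.

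The paper takes the \emph{outer} approximate unit $(a_n)$ in $P_c=C_c(E)P$, not in $C_c(E)(A_1+J)C_c(E)$. Because $C_0(E)$ lands in the \emph{center} of $M(P)$, the square roots $d_n=(a_n-a_{n-1})^{1/2}$ automatically commute with every $g(\phi)$. This kills the cross-terms, giving $[d_n u_n d_n,g(\phi)]=d_n[u_n,g(\phi)]d_n$, and---more importantly---it decouples the two layers: the compact sets $K_n\subset G$ (those $g$ with $d_n\,g(c)\neq 0$) are determined once $(a_n)$ is chosen, so the \emph{inner} unit $(u_n)\subset J$ can then be chosen afterwards to satisfy conditions quantified over $g\in K_n$ (the paper's conditions (d), (e), (f)). In your construction the same sequence $(u_n)$ must simultaneously carry the compact $E$-support (hence determine $K_n$) and be quasi-central and asymptotically $G$-invariant relative to $\{g(\phi):g\in K_n\}$; this is circular, and your sketch does not break the circle. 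The hypothesis ``$PA_1$ dense in $A_1$'' is in the statement precisely so that an approximate unit for $A_1$ can be taken in $P_c$.

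Separately, (VII) is not a consequence of (V): knowing $g\mapsto[M,g(c)]\in C_0(G,J)$ does not make $\int_G g(c)[M,g(c)]\,d\mu_G$ an element of $J$, since $C_0$ gives no integrability and Lemma~\ref{lem_nondegen_int} only bounds the norm without placing the result in $J$. The paper imposes a dedicated estimate in the diagonal, namely $\|[u_n,g(c)]\|\le 2^{-n}/\mu_G(K_n)$ for $g\in K_n$, so that
\[
\int_G g(c)M_1g(c)\,d\mu_G - M_1 \;=\; \sum_{n\ge 1} d_n\Bigl(\int_{K_n}g(c)[u_n,g(c)]\,d\mu_G\Bigr)d_n
\]
is absolutely convergent in $J$. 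This is an extra requirement in the diagonal, not a corollary of (V).
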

\begin{proof}  We set $Y$ to be a compact subset of $C_0(E)$ consisting of functions $\phi$ such that the support $\{\phi\neq 0\}$ of $\phi$ is contained in the support $\{c\neq 0\}$ of $c$ and such that the translate $GY$ by $G$ generates $C_0(E)$. We note that such $Y$ exists and also that in practice, we can take $Y$ to be $\{c\}$ (see Remark \ref{rem_below}). Let $\tilde K_n$ be increasing and exhausting compact subsets of $G$, $\tilde\Delta$ (resp. $\tilde\Delta'$)  be a compact subset of $\mathrm{Span}\Delta$ (resp. of $\mathrm{Span}\Delta'$) whose span is dense in $\mathrm{Span}\Delta$  (resp. in $\mathrm{Span}\Delta'$) and $h_1, h_2$ be contractive, strictly positive elements of $A_1$ and $A_2$. First, using Lemma 1.4 of \cite{Ka88} (see also \cite{Hig87}), we take an approximate unit $a_n$ in $P_c$ such that the following conditions hold for $d_{n}=(a_n-a_{n-1})^\frac12$ where we set $a_0=0$:
\begin{enumerate}[(1)]
\item $||g(d_n)-d_n|| \leq 2^{-n}$ for any $g$ in $\tilde K_n$, 
\item $||d_nh_1|| \leq 2^{-n}$,
\item $||[d_n, x]||\leq 2^{-n}$  for any $x$ in $\tilde\Delta$.
\end{enumerate}
Let $K_n$ be increasing and exhausting compact subsets of $G$ so that 
\[
a_nh(c)=0 \,\,\, \text{unless $h\in K_n$}.
\]
Note, we have
\begin{enumerate}[(4)]
\item $d_nh(c)=0, \,\, d_nh(\phi)=0$ unless $h\in K_{n}$ for any $\phi \in Y$.
\end{enumerate}
Secondly, using Lemma 1.4 of \cite{Ka88} again, we take an approximate unit $u_n$ in $J$ such that the following conditions hold:
\begin{enumerate}[(a)]
\item $||g(u_n)-u_n|| \leq 2^{-n}$ for any $g$ in $\tilde K_n$, 
\item $||(1-u_n)d_nh_2|| \leq 2^{-n}$,
\item $||[u_n, x]||\leq 2^{-n}$  for any $x$ in $\tilde\Delta$ 
\item $||[u_n, g(\phi)]||\leq 2^{-n}$ for any $\phi$ in $Y$ and for any $g$ in $K_n$,
\item $||(1-u_n)d_n[x, g(\phi)]||\leq 2^{-n}$ for any $x$ in $\tilde\Delta'$, $\phi$ in $Y$ and for any $g$ in $G$,
\item $||[u_n, g(c)]||\leq \frac{2^{-n}}{\mu_G(K_n)}$ for $g\in  K_n$ so that 
\[
||\left( \int_{g\in K_n}g(c)[u_n, g(c)]d\mu_G(g) \right)|| \leq 2^{-n}.
\]
\end{enumerate}
For the condition $(e)$, we can achieve this for any $g$ in $G$ because we have 
\[
d_n[x, g(\phi)] = [d_n, x]g(\phi) + [x, d_ng(\phi)]  \to 0
\]
as $g$ goes to infinity since $[d_n, x]$ is in $J$ and $d_n$ is in $A_1$ (c.f. Lemma \ref{lem_nondegen}).
Now, we set 
\[
M_1=\sum_{j\geq1}d_ju_jd_j, \,\, M_2=1-M_1=\sum_{j\geq1}d_j(1-u_j)d_j.
\]
Checking that the conditions (I), (II), (III) and (IV) hold for these $M_1, M_2$ goes as usual, so we leave it to the reader. Note that using (4) and (d), we have for any $g$ in $K_{n+1}- K_n$ and for any $\phi$ in $Y$,
\[
||[M_1, g(\phi)]||  = ||\sum_{j\geq1}d_j[u_j, g(\phi)]d_j|| = ||\sum_{j\geq n+1}d_j[u_j, g(\phi)]d_j|| \leq 2^{-n},
\]
where the series appearing here is absolutely convergent with summands in $J$. Condition (V) follows from this. Using (e), we have for any $x$ in $\tilde\Delta'$, $\phi$ in $Y$ and for any $g$ in $G$,
\[
||\sum_{j\geq n+1}d_j(1-u_j)d_j [x, g(\phi)]|| \leq 2^{-n},
\]
where the series appearing here is absolutely convergent with summands in $J$. For any $j\geq1$, $x$ in $\tilde\Delta'$ and for any $\phi$ in $Y$, we also have 
\[
d_j [x, g(\phi)] = [d_j, x]g(\phi) + [x, d_jg(\phi)] \to 0
\]
as $g$ in $G$ goes to infinity as mentioned before. Condition (VI) follows from these. Using (4), we have
\[
\int_{g \in G} g(c)M_1 g(c)d{\mu_G}(g) - M_1 = \sum_{j\geq1}d_j \left( \int_{g\in K_j}g(c)[u_j, g(c)]d\mu_G(g) \right)d_j.
\]
Using (f), we see that the series appearing here is absolutely convergent with summands in $J$. Condition (VII) follows from this.
\end{proof}
\begin{remark}\label{rem_below} In this proof, we took a compact subset  $Y$ of $C_0(E)$ consisting of functions $\phi$ such that $\{\phi\neq 0\}$ is contained in $\{c\neq 0\}$ and such that the translate $GY$ by $G$ generates $C_0(E)$. To see such $Y$ exists, we just need to know that any function $\phi$ in $C_0(E)$ can be written as 
\[
\int_{g\in G}g(c)^2\phi d\mu_G(g)=\int_{g\in G}g(c^2g^{-1}(\phi)) d\mu_G(g).
\]
In practice, we can take $Y$ to be just $\{c\}$ in the following sense. We can generate a $G$-subalgebra $C_0(E')$ of $C_0(E)$ by the translate $Gc$ by $G$ of the single function $c$. One can check that the spectrum $E'$ of this subalgebra is a proper $G$-space with a surjective $G$-equivariant map from $E$ to $E'$. Since $E$ is a co-compact model of the universal proper $G$-space, it follows that so is $E'$. Moreover, the function $c$ defines a cut-off function on $E'$. 
\end{remark}

\begin{theorem}\label{thm_implies}  If the gamma element $\gamma$ exists, the element $\gamma$ is represented by some $G$-equivariant Kasparov cycle with property $(\gamma)$.
\end{theorem}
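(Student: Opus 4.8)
The plan is to build $(H,T)$ as an explicit representative of the Kasparov product through which $\gamma$ is assumed to factor, using the modified technical theorem, Theorem~\ref{thm_tech}, whose supplementary conclusions (V)--(VII) are designed precisely to deliver conditions (ii.i) and (ii.ii). Observe first that part (i) of property $(\gamma)$ is automatic for \emph{any} cycle $(H,T)$ representing $\gamma$: its restriction to a compact subgroup $K\le G$ represents the image of $\gamma$ under the restriction map $R(G)\to R(K)$, which equals $1_K$ by the first defining property of the $\gamma$-element. So the entire task is to produce a representative $(H,T)$ of $\gamma$ carrying a non-degenerate $G$-equivariant representation of $C_0(E)$ that satisfies (ii.i) and (ii.ii).

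Write $\gamma=\delta\otimes_P d$ with $P$ a separable proper $G$-$\Calg$, $\delta\in KK^G(\mathbb{C},P)$ and $d\in KK^G(P,\mathbb{C})$. Represent $\delta$ by a $G$-equivariant Hilbert $P$-module $\mathcal{E}$ with operator $F_\delta$, and $d$ by a $G$-Hilbert space $H_d$ with a non-degenerate $G$-equivariant $\ast$-homomorphism $\pi\colon P\to B(H_d)$ and operator $F_d$; after a standard preliminary normalization we may take $F_\delta=F_\delta^\ast$, $F_d=F_d^\ast$ and $g\mapsto g(F_\delta)$, $g\mapsto g(F_d)$ norm-continuous. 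Set $H:=\mathcal{E}\otimes_\pi H_d$, a separable graded $G$-Hilbert space. Properness of $P$ provides a non-degenerate $G$-equivariant $\ast$-homomorphism $C_0(E)\to Z(M(P))$; the resulting action of $C_0(E)$ on $\mathcal{E}$ --- which is by operators in the centre of $M(K(\mathcal{E}))$, and coincides on $H$ with $1\otimes\pi(C_0(E))$ --- gives a non-degenerate $G$-equivariant representation of $C_0(E)$ on $H$. The operator will be the usual product operator
\[
T:=M_1^{1/2}(F_\delta\otimes 1)M_1^{1/2}+M_2^{1/2}SM_2^{1/2},
\]
where $S$ is an $F_\delta$-connection for $F_d$ and $M_1+M_2=1$ is the partition of unity produced by Theorem~\ref{thm_tech}.

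To apply Theorem~\ref{thm_tech}, take $J:=K(H)$ and let $C_0(E)$ itself be the proper algebra acting on $J$. Choose $A_1$, $A_2$ and the norm-separable sets $\Delta$, $\Delta'$ essentially as in the standard construction of $\delta\otimes_P d$: roughly, $A_1$ generated by $C_0(E)$ and $K(\mathcal{E})\otimes 1$ (the ideal absorbed by $M_1$), $A_2$ the ``$F_d$-connection side'' algebra (absorbed by $M_2$), $\Delta$ containing the $G$-orbit of $\{F_\delta\otimes 1,\,S\}$, and $\Delta'$ chosen so that conclusion (VI) controls the commutators $[g(\phi),S]$; one arranges that the hypotheses of Theorem~\ref{thm_tech} hold, the key one, $A_1A_2\subseteq J$, being Kasparov's computation that $\delta\otimes_P d$ is well defined. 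Conclusions (I)--(IV) then show, as in the classical case, that $(H,T)$ is a $G$-equivariant Kasparov cycle representing $\delta\otimes_P d=\gamma$, so (i) holds. For (ii.i): the $\delta$-side contributes nothing, because $C_0(E)$ commutes with $F_\delta$, so $[g(\phi),F_\delta\otimes 1]=0$ for all $\phi\in C_0(E)$, $g\in G$; the terms of $[g(\phi),T]$ involving $[g(\phi),M_i^{1/2}]$ lie in $C_0(G,J)$ by (V) and norm-continuity of the functional calculus; and the term $M_2^{1/2}[g(\phi),S]M_2^{1/2}$ is controlled by (VI) once $[g(\phi),S]$ is identified modulo $K(H)$ with a connection of $[\pi(\phi),F_d]$ and the relevant operators lie in $\Delta'$ --- giving $g\mapsto[g(\phi),T]\in C_0(G,K(H))$. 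For (ii.ii): with $c$ the chosen cut-off function, expand $\int_{g\in G}g(c)Tg(c)\,d\mu_G(g)-T$ using $\int_{g\in G}g(c)^2\,d\mu_G(g)=1$ and $[g(c),M_i^{1/2}]\in C_0(G,J)$ (again by (V)); the $M_i$-contributions are compact by (VII), the contributions of $F_\delta\otimes 1$ and $S$ are compact because these operators are almost $G$-equivariant and, on the $\delta$-side, commute with $C_0(E)$, and the cross terms are compact by Lemmas~\ref{lem_nondegen}--\ref{lem_comp_int}.

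The conceptual content is thus isolated in Theorem~\ref{thm_tech}, and the remaining difficulty is essentially one of bookkeeping: making the choices of $A_1$, $A_2$, $\Delta$, $\Delta'$ precise enough that all hypotheses of Theorem~\ref{thm_tech} hold and that its extra conclusions (V), (VI), (VII) unwind exactly to (ii.i) and (ii.ii) for the product operator $T$. The one genuinely new point, relative to the classical construction of $\delta\otimes_P d$, is the control of $[g(\phi),T]$ on the $d$-side: since $[\pi(\phi),F_d]$ need not be compact on $H_d$, the statement $g\mapsto[g(\phi),T]\in C_0(G,K(H))$ must be recovered from the $M_2$-compression together with the $C_0(G)$-decay built into conclusions (V) and (VI) --- which is exactly the feature that Theorem~\ref{thm_tech} adds to the usual technical theorem.
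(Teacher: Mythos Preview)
Your overall strategy---build a representative of $\delta\otimes_P d$ via the modified Technical Theorem~\ref{thm_tech} and read off (ii.i) and (ii.ii) from its extra conclusions (V)--(VII)---is exactly the paper's. The paper streamlines the setup by first stabilizing $P$ so that $\delta$ is represented on the standard module $(P,b)$ with $b\in M(P)$; then $H$ is simply the Hilbert space of the $d$-cycle, no connection $S$ is needed, and the product operator is $T=M_1^{1/4}bM_1^{1/4}+M_2^{1/4}FM_2^{1/4}$. The paper also takes $P$ itself (rather than $C_0(E)$) as the proper algebra in Theorem~\ref{thm_tech}, and invokes Lemma~\ref{lem_property_alg} to reduce checking (ii.i), (ii.ii) for $T$ to checking them for the pieces $M_i$, $M_1b$, $M_2F$ separately.

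There is, however, a genuine gap in your handling of (ii.ii). The claim that ``the contributions of \ldots $S$ are compact because these operators are almost $G$-equivariant'' is not justified: almost $G$-equivariance of $S$ (even in the strong form $g(S)-S\in K(H)$, which a general connection need not enjoy) gives no control over $\int_G g(c)\,S\,g(c)\,d\mu_G(g)-S$. The paper deals with this by explicitly placing the element $F'-F$, where $F'=\int_G g(c)Fg(c)\,d\mu_G(g)$, into the algebra $A_2$; condition (III) of Theorem~\ref{thm_tech} then yields $M_2(F'-F)\in K(H)$, and this is combined with (VI) and (VII) in a short but non-obvious computation to obtain $\int_G g(c)M_2Fg(c)\,d\mu_G(g)-M_2F\in K(H)$. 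This inclusion of $F'-F$ in $A_2$ is the genuinely new ingredient on the (ii.ii) side---your final paragraph correctly flags the new (ii.i) content but overlooks this one.
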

\begin{proof} We prove that there is a representative $(H, T)$ of $\gamma$ which satisfies the conditions (ii.i) and (ii.ii) for property $(\gamma)$. Stabilizing $P$ by the algebra of compact operators if necessary, without loss of generality, we assume that $\gamma=\delta\otimes_Pd$ where $P$ is a proper, graded $G$-$\Calg$, that the dual Dirac element $\delta$ is represented by a cycle $(P,b)$ for $KK^G(\mathbb{C}, P)$ where $b$ is an odd, bounded, self-adjoint element in the multiplier algebra $M(P)$ of $P$ satisfying $1-b^2\in P$ and $g(b)-b\in P$ for any $g$ in $G$ and that the Dirac element $d$ is represented by a cycle $(H, F)$ for $KK^G(P, \mathbb{C})$ where $F$ is an odd, bounded, self-adjoint operator on a graded $G$-Hilbert space $H$ equipped with a non-degenerate representation of $P$ satisfying $a(1-F^2)\in K(H)$, $a(g(F)-F)\in K(H)$ and $[a, F] \in K(H)$ for any $g$ in $G$ and for any $a\in P$, where the commutator here is the graded commutator. We apply Theorem \ref{thm_tech} for the algebra $J=K(H)$, the proper algebra $P$, the algebra $A_1=C^\ast(P, J)$, the algebra $A_2$ generated by the elements $(1-F^2)$, $g(F)-F$ for $g$ in $G$, $[b, F]$ and $F'-F$ where
\[
F'=\int_{g\in G}g(c)Fg(c) d\mu_G(g),
\]
and $\Delta=\{b, F\}, \Delta'=\{F\}$. Let $M_1, M_2$ be operators on $H$ satisfying the conditions (I) to (VII) of Theorem \ref{thm_tech}. We define
\[
T=M_1^{\frac14}bM_1^{\frac14} + M_2^{\frac14} F M_2^{\frac14}.
\]
Checking that the pair $(H, T)$ is a Kasparov product of $(A,b)$ and $(H, F)$ goes as usual, so we leave it to the reader. We show that the operator $T$ satisfies the conditions (ii.i) and (ii.ii) for property $(\gamma)$ with respect to the representation of $C_0(E)$ on $H$ coming from that of $P$. Since $[M_1, b]$ and $[M_2, F]$ are in $K(H)$, by Lemma \ref{lem_property_alg}, it suffices to show that operators $M_i$, $M_1b$ and $M_2F$ in place of $T$ satisfy the conditions (ii.i) and (ii.ii). All of these follow immediately from conditions (V), (VI), (VII) of $M_i$ except the condition (ii.ii) for $M_2F$:
\[
\int_{g \in G}g(c)M_2Fg(c)d\mu_G(g) - M_2F \in K(H).
\]
Since $M_2(F'-F)$ and $\int_{g \in G}g(c)M_2g(c)d\mu_G(g) - M_2$ are in $K(H)$, we have
\[
\left(\int_{g \in G}g(c)M_2g(c)d\mu_G(g)\right)\left(\int_{g \in G}g(c)Fg(c)d\mu_G(g)\right) - M_2F \in K(H).
\]
The first term in this expression is equal to 
\[
\int_{(g, g') \in G\times G_0}g(c)M_2g(c)(gg')(c)F(gg')(c)d\mu_{G\times G},
\]
where $G_0$ is a compact subset of $G$ such that $cg(c)=0$ unless $g \in G_0$. By Lemma \ref{lem_nondegen_int} and (VI), this term is, modulo compact operators, equal to 
\[
\int_{(g, g') \in G\times G_0}g(c)M_2Fg(c)(gg')(c^2)d\mu_{G\times G} = \int_{g \in G}g(c)M_2Fg(c)d\mu_{G}. 
\] 
The condition (ii.ii) for $M_2F$ follows from these. We conclude that the cycle $(H, T)$ represents the gamma element $\gamma$ and satisfies the conditions (ii.i), (ii.ii) for property $(\gamma)$. Thus, the cycle $(H, T)$ is a representative of $\gamma$ which has property $(\gamma)$. 
\end{proof}

\section{The $(\gamma)$-element: uniqueness and $(\gamma)^2=(\gamma)$} 
We show that a Kasparov cycle with property $(\gamma)$ is unique up to homotopy. We define the $(\gamma)$-element in the Kasparov ring $R(G)$ and show that the $(\gamma)$-element is an idempotent. 
\begin{definition} \label{def_proper} We say that a $G$-equivariant Kasparov cycle $(H, T)$ is proper if the condition (ii) (hence, (ii.i) and (ii.ii)) of property $(\gamma)$ is satisfied.
\end{definition}

We next recall the definition of cycles for $G$-equivariant $K$-homology group $KK^G(B, \mathbb{C})$ for a $G$-$\Calg$ $B$.
\begin{definition}(\cite{Ka88}) For a separable $G$-$\Calg$ $B$, a $G$-equivariant Fredholm $B$-module is a triple $(\pi, H, T)$ where $H$ is a separable, graded $G$-Hilbert space equipped with a $G$-equivariant representation $\pi$ of $B$ and $T$ is an odd, bounded, self-adjoint operator on $H$ satisfying the following three conditions.
\[
\text{Fredholm condition:} \,\,\,\,\, b(1-T^2) \in K(H) \,\,\, \text{for any $b \in B$}.
\]
\[
\text{Almost-$G$-equivariance:} \,\,\,\, b(g(T)-T) \in K(H) \,\,\, \text{for any $g\in G$, $b \in B$}.
\]
\[
\text{Pseudo-locality:} \,\,\,\,  [b, T] \in K(H) \,\,\, \text{for any $b \in B$}.
\]
Here, we simply write $b$ for $\pi(b)$.
\end{definition}
The abelian group $KK^G(B, \mathbb{C})$ is defined as homotopy equivalence classes $[\pi, H, T]$ of Fredholm $B$-modules $(\pi, H, T)$. See \cite{Ka88}, \cite{Bl98} for more details and for more general groups $KK_\ast^G(B, A)$ defined by Fredholm $B$-$A$-modules.  

Let $(H, T)$ be a proper $G$-equivariant Kasparov cycle and denote by $\pi_0$, the representation of $C_0(E)$ on $H$ witnessing the condition (ii) for property $(\gamma)$. We also set $\pi_{G, H}$ to be the the representation of $G$ on $H$. Let $\bar H$ be the tensor product 
\[\bar H= H\otimes L^2(G).
\] with the $G$-Hilbert space structure given by the tensor product representation $1\otimes \rho_G$ of the trivial representation of $G$ on $H$ and the right regular representation of $G$ on $L^2(G)$. We equip $\bar H$ with the $G$-equivariant representation $\bar\pi_0\rtimes(\pi_{G, H}\otimes \lambda_G)$ of the reduced crossed product algebra $C_0(E)\rtimes_rG$ with the trivial $G$-action where $\bar\pi_0$ sends $\phi$ in $C_0(E)$ to $\pi_0(\phi)\otimes1$ on $\bar H$ and where $\lambda_G$ is the left-regular representation of $G$ on $L^2(G)$. Define an odd, bounded, self-adjoint operator $\bar T$ on $\bar H$ by
\[
\bar T = (g(T))_{g \in G}.
\]
\begin{proposition} \label{prop_barcycle} The triple $(\bar\pi_0\rtimes(\pi_{G, H}\otimes \lambda_G), \bar H, \bar T)$ is a $G$-equivariant Fredholm $C_0(E)\rtimes_rG$-module.
\end{proposition}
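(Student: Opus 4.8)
The plan is to verify the three conditions defining a $G$-equivariant Fredholm $C_0(E)\rtimes_r G$-module, after two preliminary simplifications. First, $(\bar\pi_0,\pi_{G,H}\otimes\lambda_G)$ is a covariant representation of $(C_0(E),G)$ --- the covariance relation being exactly the hypothesis in (ii) that $\pi_0$ is a $G$-equivariant representation of $C_0(E)$ --- whose integrated form descends to the \emph{reduced} crossed product because the group part contains $\lambda_G$, while $1\otimes\rho_G$ commutes with the whole image (left and right regular representations commuting), so the result is a $G$-equivariant representation of $C_0(E)\rtimes_r G$ carrying its trivial action. Second, conjugation by the pointwise-unitary $W$ on $\bar H=L^2(G,H)$ given by $(W\xi)(g)=u_g^{-1}\xi(g)$ identifies this representation with the standard regular representation $\lambda^{\pi_0}$ of $C_0(E)\rtimes_r G$ induced from $\pi_0$, namely $\phi\mapsto\tilde\pi_0(\phi)$ (multiplication by $g\mapsto\pi_0((g^{-1})\phi)$) together with $h\mapsto 1\otimes\lambda_h$; it carries $1\otimes\rho_G$ to some $G$-action $\tilde\rho$ still fixing this image; and it straightens the operator: a short computation gives $W\bar T W^{*}=T\otimes 1$, hence $W(1-\bar T^{2})W^{*}=(1-T^{2})\otimes 1$, $\tilde\rho_g(T\otimes1)\tilde\rho_g^{-1}=g(T)\otimes 1$, and $W\bigl(g(\bar T)-\bar T\bigr)W^{*}=\bigl(g(T)-T\bigr)\otimes 1$. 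Since $W$ intertwines all of this data together with the $G$-actions, it suffices to show that $(\lambda^{\pi_0},H\otimes L^2(G),T\otimes 1)$, equipped with the $G$-action $\tilde\rho$, is a $G$-equivariant Fredholm $C_0(E)\rtimes_r G$-module.

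After this reduction the three conditions read: $\lambda^{\pi_0}(b)(S\otimes 1)\in K(\bar H)$ with $S=1-T^{2}\in K(H)$ (Fredholm); $\lambda^{\pi_0}(b)(S\otimes 1)\in K(\bar H)$ with $S=g(T)-T\in K(H)$ (almost-$G$-equivariance); and $[\lambda^{\pi_0}(b),T\otimes 1]\in K(\bar H)$ (pseudo-locality). As $K(\bar H)$ is norm-closed and $b\mapsto\lambda^{\pi_0}(b)$ is contractive, I would treat $b=\psi$ in the dense $\ast$-subalgebra $C_{c}(G\times E)\subset C_0(E)\rtimes_r G$. Writing $\lambda^{\pi_0}(\psi)=\int_{G}\tilde\pi_0(\psi(h))(1\otimes\lambda_h)\,d\mu_G(h)$ and using that $1\otimes\lambda_h$ commutes with both $S\otimes 1$ and $T\otimes 1$, one finds that $\lambda^{\pi_0}(\psi)(S\otimes 1)$ and $[\lambda^{\pi_0}(\psi),T\otimes 1]$ are the integral operators on $L^{2}(G,H)$ with operator-valued kernels
\[
\kappa(g,k)=\pi_0\bigl((g^{-1})\psi(gk^{-1})\bigr)\,S \qquad\text{and}\qquad \kappa(g,k)=\bigl[\,\pi_0\bigl((g^{-1})\psi(gk^{-1})\bigr),\,T\,\bigr],
\]
respectively, each supported in the ``tube'' $\{(g,k):gk^{-1}\in K_{0}\}$, where $K_{0}\subset G$ is the (compact) support of $\psi$ (a harmless modular factor is suppressed when $G$ is not unimodular).

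The heart of the matter is the behaviour of these kernels at infinity. For the first, $\pi_0((g^{-1})\phi)S=u_g^{-1}\pi_0(\phi)\,g(S)\,u_g$, so $\|\kappa(g,k)\|=\|\pi_0(\psi(gk^{-1}))\,g(S)\|\to 0$ as $g\to\infty$ by Lemma~\ref{lem_nondegen} (valid since $E$ is a proper $G$-space and $\pi_0$ is non-degenerate), uniformly because $\psi(gk^{-1})$ ranges in the compact set $\psi(G)\subset C_0(E)$; moreover $\pi_0((g^{-1})\phi)S\in K(H)$ as $S\in K(H)$. For the second, condition (ii.i) says exactly that $a\mapsto[\,\pi_0((a)\phi),T\,]$ belongs to $C_0(G,K(H))$ for every $\phi\in C_0(E)$; evaluating at $a=g^{-1}$ shows $\kappa(g,k)$ is compact and tends to $0$ in norm as $g\to\infty$, again uniformly over $\psi(gk^{-1})\in\psi(G)$. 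So in each case $\kappa$ is a continuous $K(H)$-valued kernel, supported in a tube over a compact subset of $G$ and vanishing at infinity along that tube --- and any such kernel defines a compact operator on $L^{2}(G,H)$: truncating the $g$-variable to an exhausting sequence of compact subsets yields compactly supported continuous $K(H)$-valued kernels, hence operators that are norm-limits of finite-rank ones, while the Schur test bounds the operator norm of the remaining tail by a fixed multiple of $\sup\{\|\kappa(g,k)\|:g\notin L\}$, which tends to $0$ as $L$ exhausts $G$. This yields the required compactness for $b\in C_{c}(G\times E)$, and density of $C_{c}(G\times E)$ in $C_0(E)\rtimes_r G$ finishes the argument.

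I expect the last compactness step to be the main obstacle, and it is exactly there that the crossed product --- not merely $C_0(E)$ --- is essential. On their own, $1-\bar T^{2}$, $g(\bar T)-\bar T$ and $[\bar\pi_0(\phi),\bar T]$ are just multiplication operators on $L^{2}(G,H)$ by uniformly bounded families of compact operators (for $1-\bar T^{2}$, by the \emph{constant-norm} family $g\mapsto g(1-T^{2})$) and are not themselves compact; only after convolving with an element of $C_0(E)\rtimes_r G$, which turns multiplication into a tube-supported integral kernel, does properness of $E$ --- through Lemma~\ref{lem_nondegen} and condition (ii.i) --- force the decay at infinity that makes them compact. Note that conditions (i) and (ii.ii) of property $(\gamma)$ are not used here; only the Kasparov-cycle axioms $1-T^{2}\in K(H)$ and $g(T)-T\in K(H)$, together with (ii.i) and the properness of $E$, enter.
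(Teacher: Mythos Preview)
Your argument is correct, but it takes a longer route than the paper's. The paper works directly with the diagonal operator $\bar T=(g(T))_{g\in G}$ on $\bar H=L^2(G,H)$ and the \emph{untwisted} representation $\bar\pi_0(\phi)=\pi_0(\phi)\otimes 1$. In that picture $\bar\pi_0(\phi)(1-\bar T^2)$, $\bar\pi_0(\phi)(h(\bar T)-\bar T)$ and $[\bar\pi_0(\phi),\bar T]$ are themselves diagonal operators whose $g$-th entry is, respectively, $\pi_0(\phi)g(1-T^2)$, $\pi_0(\phi)g(h(T)-T)$ and $[\pi_0(\phi),g(T)]$; Lemma~\ref{lem_nondegen} (for the first two) and condition (ii.i) (for the third) say exactly that these families lie in $C_0(G,K(H))$, and a diagonal operator with $C_0(G,K(H))$ entries is compact. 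Since $\bar T$ commutes \emph{exactly} with $\pi_{G,H}(h)\otimes\lambda_h$, multiplying by the group part of the crossed product contributes nothing, and the three conditions follow for dense elements $\phi u_h$ without any kernel analysis.

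Your conjugation by $W$ trades the diagonal $\bar T$ for the constant $T\otimes 1$, but the price is that $C_0(E)$ now acts fibrewise by $g\mapsto\pi_0(g^{-1}\phi)$, which forces you into the integral-kernel/Schur-test argument. That argument is sound (your uniformity over $\phi\in\psi(G)$ follows because $\phi\mapsto(g\mapsto\pi_0(\phi)g(S))$ and $\phi\mapsto(g\mapsto[\pi_0(g\phi),T])$ are bounded linear maps into $C_0(G,K(H))$, so they carry compact sets to compact, hence uniformly $C_0$, sets), and it has the virtue of being a template that transports to more general Hilbert-module settings. But for the statement at hand the paper's direct diagonal computation is shorter and makes the role of $C_0(G,K(H))$ more transparent: no truncation or Schur test is needed, because the ``tube'' in your picture collapses to the diagonal in theirs.
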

\begin{proof} We give a proof for a discrete group $G$. The general case is left as an exercise. First note that $1-T^2$ is compact and that the representation $\pi_0$ is non-degenerate on $H$. Thus, using Lemma \ref{lem_nondegen}, we have
\[
\text{the function} \,\,\, (g \mapsto \phi(1-g(T)^2)) \in C_0(G, K(H)) \,\,\, \text{for any $\phi \in C_0(E)$}.
\]
The Fredholm condition for the triple follows from this. Secondly, since $h(T)-T$ is compact for any $h$ in $G$, again by Lemma \ref{lem_nondegen}, we have 
\[
\text{the function} \,\,\, (g \mapsto \phi(gh(T)-g(T))) \in C_0(G, K(H)) \,\,\, \text{for any $\phi \in C_0(E)$}
\]
for any $h$ in $G$. The almost $G$-equivariance follows from this.  Since the cycle $(H, T)$ satisfies the condition (ii.i) of property $(\gamma)$, we have
\[
\text{the function}\,\,\, (g \mapsto [\phi, g(T)]) \in C_0(G, K(H)) \,\,\, \text{for any $\phi \in C_0(E)$}.
\]  
Also, the operator $\bar T$ commutes with the representation $\pi_{G, H}\otimes \lambda_G$ of $G$. These implies the pseudo-locality with respect to $C_0(E)\rtimes_rG$.
\end{proof}

Let $x$ in $R(G)$ be an element which is represented by a proper cycle $(H, T)$. We set $\bar x$ to be the element in $KK^G(C_0(E)\rtimes_r G, \mathbb{C})$ represented by the $G$-equivariant Fredholm $C_0(E)\rtimes_rG$-module $(\bar\pi_0\rtimes(\pi_{G, H}\otimes \lambda_G), \bar H, \bar T)$. For a cut-off function $c$ on $E$, the cut-off projection $p_c$ in $C_0(E)\rtimes_rG$ is defined as 
\[
p_c= \int_{g\in G}cg(c)u_gd\mu_G(g).
\]
This defines the element $[p_c]$ in $KK(\mathbb{C}, C_0(E)\rtimes_rG)$. We note that the element $[p_c]$ is defined independently of the choice of $c$. Note that the Kasparov product defines the pairing
\[
KK(\mathbb{C}, C_0(E)\rtimes_rG) \times KK^G(C_0(E)\rtimes_rG, \mathbb{C}) \to KK^G(\mathbb{C}, \mathbb{C})
\]
which in particular maps the pair $([p_c], \bar x)$ to the product $[p_c]\otimes_{C_0(E)\rtimes_rG} \bar x$ in $R(G)$.
\begin{proposition} \label{prop_barcycle_back} We have $[p_c]\otimes_{C_0(E)\rtimes_rG} \bar x = x$.
\end{proposition}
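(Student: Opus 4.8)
The plan is to compute the product straight from the definition of the Kasparov product: represent $[p_c]$ by the Hilbert-module cycle $(p_cB,0)$ with $B:=C_0(E)\rtimes_rG$, form the internal tensor product with $\bar x$, and then produce a $G$-equivariant unitary identifying the resulting cycle with $(H,T)$ up to a compact perturbation of the operator.

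First I would note that $[p_c]\in KK(\mathbb{C},B)$ is represented by the Kasparov $\mathbb{C}$-$B$-module $(p_cB,0)$, the right Hilbert $B$-module generated by $p_c$ with the unital left action of $\mathbb{C}$ and zero operator; this is legitimate because the identity operator on $p_cB$ is the compact operator given by $p_c\in p_cBp_c=K(p_cB)$. Since this cycle has zero operator, $[p_c]\otimes_B\bar x$ is represented by $(p_cB\otimes_B\bar H,\,p_c\bar T p_c)$, where $p_cB\otimes_B\bar H$ is canonically the closed submodule $p_c\bar H\subseteq\bar H$ and $p_c\bar T p_c$ is a $\bar T$-connection. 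The positivity requirement of the Kasparov product is vacuous here (the first operator is $0$), and the remaining conditions — that $p_c\bar T p_c$ is a $\bar T$-connection and that $(p_c\bar H,p_c\bar T p_c)$ is itself a $G$-equivariant Kasparov $\mathbb{C}$-$\mathbb{C}$-module — are verified routinely, as in Proposition \ref{prop_barcycle} and Lemmas \ref{lem_nondegen}--\ref{lem_comp_int}; the structural point is that $\bar T=(g(T))_{g\in G}$ commutes with the image of $G$ in $C_0(E)\rtimes_rG$, so that every commutator one must estimate is carried by $C_0(E)$, and e.g. $1-(p_c\bar T p_c)^2\equiv p_c(1-\bar T^2)p_c\in K(\bar H)$ because $p_c\in B$ and $\bar x$ is a Fredholm module over $B$.

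The core of the proof is the identification of $p_c\bar H$ with $H$. Writing $\bar H=L^2(G,H)$, the requirement that a unitary $W\colon H\to p_c\bar H$ be $G$-equivariant, intertwining $\pi_{G,H}$ with the restriction to $p_c\bar H$ of $1\otimes\rho_G$, forces $W$ to have the form $(Wv)(g)=\beta(g)\,\psi_0(g(v))$, where $\beta$ is a fixed scalar function on $G$ (a power of the modular function, $\beta\equiv 1$ when $G$ is discrete) and $\psi_0$ is a single operator on $H$; with $\psi_0=\pi_0(c)$, the cut-off identity $\int_{g\in G}g(c)^2\,d\mu_G(g)=1$ and non-degeneracy of $\pi_0$ make $W$ an isometry, and a short computation shows that its range projection $WW^\ast$ is exactly the projection $p_c$ on $\bar H$, so $W$ is a $G$-equivariant unitary of $H$ onto $p_c\bar H$. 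Since $p_cW=W$ we have $W^\ast(p_c\bar T p_c)W=W^\ast\bar T W$, and using $g^{-1}(g(T))=T$ together with the covariance $g^{-1}(\pi_0(c))=\pi_0(g^{-1}(c))$ this is computed to be
\[
W^\ast\bar T W=\int_{g\in G}g(c)\,T\,g(c)\,d\mu_G(g).
\]
By condition (ii.ii) of property $(\gamma)$ the right-hand side differs from $T$ by a compact operator, so via $W$ the cycle $(p_c\bar H,p_c\bar T p_c)$ is unitarily equivalent to $\bigl(H,\ \int_{g\in G}g(c)Tg(c)\,d\mu_G(g)\bigr)$, which is a compact perturbation of $(H,T)$ and hence defines the same class $x$ in $R(G)$. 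This yields $[p_c]\otimes_B\bar x=x$.

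I expect the only real obstacle to be the modular-function bookkeeping in the non-discrete case: the clean formula $(Wv)(g)=\pi_0(c)g(v)$ and the verification $WW^\ast=p_c$ work verbatim when $G$ is discrete, and matching the conventions for $\rho_G$, $\lambda_G$ and the cut-off projection $p_c$ in general requires inserting the right powers of the modular function. Once $W$ is constructed, the identity $W^\ast\bar T W=\int_{g\in G}g(c)Tg(c)\,d\mu_G(g)$ is a one-line computation and the appeal to (ii.ii) is immediate; accordingly, as with Proposition \ref{prop_barcycle}, I would present the discrete case in full and leave the general modular adjustments as an exercise.
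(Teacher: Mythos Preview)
Your proposal is correct and follows essentially the same route as the paper: represent the product by $(p_c\bar H,\,p_c\bar T p_c)$, build the $G$-equivariant unitary $W\colon H\to p_c\bar H$ via $(Wv)(g)=\pi_0(c)\,g(v)$ (exactly the map $v\mapsto\sum_h\pi_0(c)h(v)\otimes\delta_h$ in the discrete case), compute $W^\ast\bar T W=\int_G g(c)Tg(c)\,d\mu_G(g)$, and invoke (ii.ii). The paper likewise restricts to discrete $G$ and leaves the general case to the reader, so your remark about modular bookkeeping is in the same spirit.
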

\begin{proof} Again, for simplicity, we only give a proof for a discrete group $G$. The product $[p_c]\otimes_{C_0(E)\rtimes_rG} \bar x$ is represented by the cycle $(p_c\bar H, p_c\bar Tp_c)$ where we simply write by $p_c$ the projection $\bar\pi_0\rtimes(\pi_{G, H}\otimes \lambda_G)(p_c)$ represented on $\bar H$. Without loss of generality, let us suppose that the cut-off function $c$ is the one witnessing the condition (ii.ii) for property $(\gamma)$ of the proper cycle $(H,T)$ representing $x$. We have an isomorphism $H \cong p_c\bar H$ of $G$-Hilbert spaces given by the map
\[
v \mapsto \sum_{h \in G}\pi_0(c)h(v)\otimes\delta_h \,\,\, \text{for $v$ in $H$},
\]
whose inverse is given by (the restriction of)
\[
(v_h)_{h \in G} \mapsto \sum_{h \in G}h^{-1}(\pi_0(c)v_h) \,\,\, \text{for $(v_h)_{h\in G}$ in $\bar H$}.
\]
Via this isomorphism, the cycle $(p_c\bar H, p_c\bar Tp_c)$ is isomorphic to the cycle $(H, T')$ where
\[
T'=\sum_{h\in G}h(c)Th(c)
\]
which is equal to $T$ modulo compact operators by the condition (ii.ii) of the cycle $(H, T)$. We see that $(H, T')$ is homotopic to $(H, T)$. The claim follows from this. 
\end{proof} 

Now, let $x$ in $R(G)$ be an element which is represented by a proper cycle $(H, T)$, $\bar x$ be the corresponding element in $KK^G(C_0(E)\rtimes_rG, \mathbb{C})$ as before, and $y$ be an arbitrary element in $R(G)$ which is represented by a cycle $(H_1, T_1)$. Recall that we have the amplification map 
\[
\sigma_B\colon KK_\ast^G(\mathbb{C}, \mathbb{C}) \to KK_\ast^G(B, B)
\]
for a $G$-$\Calg$ $B$ and that we have the descent map
\[
j^G_r\colon KK_\ast^G(A, B) \to KK_\ast(A\rtimes_rG, B\rtimes_rG)
\]
for $G$-$\Calgs$ $A$ and $B$. We refer \cite{Ka88} for details. The element $j^G_r(\sigma_{C_0(E)}(y))$ in $KK(C_0(E)\rtimes_rG, C_0(E)\rtimes_rG)$ acts on $KK^G(C_0(E)\rtimes_rG, \mathbb{C})$ from left by the Kasparov product
\[
K(C_0(E)\rtimes_rG, C_0(E)\rtimes_rG) \times KK^G(C_0(E)\rtimes_rG, \mathbb{C}) \to KK^G(C_0(E)\rtimes_rG, \mathbb{C}).
\]
\begin{proposition} \label{prop_proper_product} We have $\bar x \otimes_\mathbb{C}y= j^G_r(\sigma_{C_0(E)}(y))\otimes_{C_0(E)\rtimes_rG} \bar x$.
\end{proposition}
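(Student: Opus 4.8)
The plan is to represent both sides by explicit $G$-equivariant Fredholm $C_0(E)\rtimes_rG$-modules and to identify them through a flip of tensor factors, the point being that $\bar H$ carries the regular ("induced") representation $\bar\pi_0\rtimes(\pi_{G,H}\otimes\lambda_G)$, which lets one absorb the $G$-representation on $H_1$ into the crossed product. It is convenient to start from the interchange law for the Kasparov product (a standard property, see \cite{Ka88} or \cite{Bl98}): since $\bar x\in KK^G(C_0(E)\rtimes_rG,\mathbb{C})$ and $y\in R(G)$, one has $\bar x\otimes_\mathbb{C}y=\sigma_{C_0(E)\rtimes_rG}(y)\otimes_{C_0(E)\rtimes_rG}\bar x$, where $C_0(E)\rtimes_rG$ is given the trivial $G$-action. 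Hence it is enough to show that the two $(C_0(E)\rtimes_rG)$-bimodule classes $\sigma_{C_0(E)\rtimes_rG}(y)$ and $j^G_r(\sigma_{C_0(E)}(y))$ induce the same map after composing on the right with $\bar x$; equivalently, one may simply compute $\bar x\otimes_\mathbb{C}y$ and $j^G_r(\sigma_{C_0(E)}(y))\otimes_{C_0(E)\rtimes_rG}\bar x$ directly as cycles and compare.

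\textbf{The two computations.} On the one hand, $\bar x\otimes_\mathbb{C}y$ is represented by the cycle on $\bar H\otimes H_1$ with representation $(\bar\pi_0\rtimes(\pi_{G,H}\otimes\lambda_G))\otimes 1$, with the tensor-product $G$-action, and with an operator $F'$ which is a Kasparov product of $\bar T\otimes 1$ (coming from $\bar x$) and a $(1\otimes T_1)$-connection (coming from $y$). On the other hand, $\sigma_{C_0(E)}(y)$ is represented by $(C_0(E)\otimes H_1,\,1\otimes T_1)$ viewed as a $(C_0(E),C_0(E))$-correspondence, so its descent $j^G_r(\sigma_{C_0(E)}(y))$ is represented by $\big((C_0(E)\otimes H_1)\rtimes_rG,\,(1\otimes T_1)\rtimes 1\big)$. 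The crux is a $G$-equivariant unitary isomorphism
\[
\big((C_0(E)\otimes H_1)\rtimes_rG\big)\otimes_{C_0(E)\rtimes_rG}\bar H\;\cong\;H_1\otimes\bar H ,
\]
carrying the left $C_0(E)\rtimes_rG$-action to $1\otimes(\bar\pi_0\rtimes(\pi_{G,H}\otimes\lambda_G))$, the operator $(1\otimes T_1)\rtimes 1$ to $T_1\otimes 1$, and the $\bar T$-connection piece of the product operator to a $(1\otimes\bar T)$-connection. This isomorphism is of the same nature as the re-coordinatisation of $\bar H$ used in the proof of Proposition \ref{prop_barcycle_back}: it is an instance of Fell's absorption principle, used to move the $G$-representation on $H_1$ inside the left regular representation appearing in $\bar\pi_0\rtimes(\pi_{G,H}\otimes\lambda_G)$. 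Thus $j^G_r(\sigma_{C_0(E)}(y))\otimes_{C_0(E)\rtimes_rG}\bar x$ is represented by the cycle on $H_1\otimes\bar H$ which is a Kasparov product of $T_1\otimes 1$ and a $(1\otimes\bar T)$-connection.

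\textbf{Conclusion via the flip.} The flip $\Sigma\colon \bar H\otimes H_1\to H_1\otimes\bar H$ (with the appropriate Koszul sign) is a graded, $G$-equivariant unitary which intertwines the two left $C_0(E)\rtimes_rG$-representations just described, and which carries $\bar T\otimes 1$ to $\pm(1\otimes\bar T)$ and $1\otimes T_1$ to $\pm(T_1\otimes 1)$. The defining conditions of a Kasparov-product operator — being the relevant connection, together with the graded positivity of the relevant commutator modulo compacts — are transported by $\Sigma$ into exactly the conditions characterising the product operator for $j^G_r(\sigma_{C_0(E)}(y))\otimes_{C_0(E)\rtimes_rG}\bar x$; by uniqueness of the Kasparov product up to operator homotopy, $\Sigma$ therefore carries the cycle representing $\bar x\otimes_\mathbb{C}y$ to a cycle representing $j^G_r(\sigma_{C_0(E)}(y))\otimes_{C_0(E)\rtimes_rG}\bar x$, which gives the claimed equality.

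\textbf{Main obstacle.} The interchange law and the flip argument are formal; the genuinely technical point, and the one I expect to occupy most of the work, is establishing the displayed unitary isomorphism together with the precise transport of the representation, the $G$-action, and the connection/positivity conditions. This is a crossed-product-module computation of the type already illustrated in Proposition \ref{prop_barcycle_back}, combined with careful bookkeeping of Fell's absorption principle and of the $G$-actions; as elsewhere in this paper, the verification that the operators in question realise the Kasparov products is routine and can be left to the reader.
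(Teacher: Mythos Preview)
Your overall strategy---represent both products by explicit cycles and match them via a Fell-absorption unitary---is exactly what the paper does, but there is a concrete error in your bookkeeping of the left $C_0(E)\rtimes_rG$-action, and it breaks the flip argument. You assert that under the identification $\big((C_0(E)\otimes H_1)\rtimes_rG\big)\otimes_{C_0(E)\rtimes_rG}\bar H\cong H_1\otimes\bar H$ the left action becomes $1\otimes(\bar\pi_0\rtimes(\pi_{G,H}\otimes\lambda_G))$. It does not: in the bimodule $C_0(E)\otimes H_1$ the group $G$ acts diagonally, so after descent the unitary $u_h$ carries the factor $\pi_{G,H_1}(h)$ with it. Identifying $(C_0(E)\otimes H_1)\rtimes_rG\cong H_1\otimes(C_0(E)\rtimes_rG)$ as right modules via $(\phi\otimes\xi)u_g\mapsto\xi\otimes\phi u_g$ and then tensoring with $\bar H$, one finds on $H_1\otimes H\otimes L^2(G)$ the left action $\phi\mapsto 1\otimes\pi_0(\phi)\otimes 1$, $u_h\mapsto \pi_{G,H_1}(h)\otimes\pi_{G,H}(h)\otimes\lambda_h$, together with the $G$-action $1\otimes 1\otimes\rho_G$. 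On the other side, $\bar H\otimes H_1=H\otimes L^2(G)\otimes H_1$ carries the left action $u_h\mapsto\pi_{G,H}(h)\otimes\lambda_h\otimes 1$ and the $G$-action $1\otimes\rho_G\otimes\pi_{G,H_1}$. The plain graded flip $\Sigma$ sends the latter to $\pi_{G,H_1}\otimes 1\otimes\rho_G$ and $1\otimes\pi_{G,H}(h)\otimes\lambda_h$, which disagree with the required data by exactly a factor of $\pi_{G,H_1}$ on $H_1$. So $\Sigma$ intertwines neither the $G$-actions nor the left $C_0(E)\rtimes_rG$-representations.

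What is actually needed is the Fell unitary $U\colon v\otimes v_1\otimes\delta_g\mapsto v\otimes g(v_1)\otimes\delta_g$, which the paper uses: it absorbs $\pi_{G,H_1}$ into the regular representation and repairs both mismatches at once. But conjugation by $U$ sends $(1\hat\otimes T_1)_{g\in G}$ to $(1\hat\otimes g^{-1}(T_1))_{g\in G}$, and one must then argue that this still represents the same product class. The paper handles this by building the product operator $F$ with an additional smallness condition (their~(iii)$'$) guaranteeing $M_2(1\hat\otimes(g^{-1}(T_1)-T_1))_{g\in G}$ is compact, so that $F$ and its $U$-conjugate differ by a compact. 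Your uniqueness-of-Kasparov-product route can be made to work once $U$ (not $\Sigma$) is in place, but as written the argument has a gap precisely at the step you flagged as the ``main obstacle'': the displayed isomorphism does not transport the representation as you claim.
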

\begin{proof} For simplicity, we only give a proof for a discrete group $G$. The Kasparov product $\bar x \otimes_\mathbb{C}y$ is represented by the $G$-equivariant Fredholm $C_0(E)\rtimes_rG$-module
\[
((\pi_0\otimes1\otimes1)\rtimes(\pi_{G, H}\otimes1\otimes \lambda_G), H\hat\otimes H_1\otimes L^2(G) , F):
\] 
the Hilbert space $H\hat\otimes H_1\otimes L^2(G)$ is (isomorphic to) the graded tensor product of $\bar H$ and $H_1$ whose $G$-Hilbert space structure given by the tensor product representation $1\otimes \pi_{G, H_1}\otimes \rho_G$ ($\pi_{G, H_1}$ is the $G$-action on $H_1$), and 
\[
F = M^{\frac14}_1(g(T)\hat\otimes1)_{g\in G}M^{\frac14}_1 + M^{\frac14}_2(1\hat\otimes T_1)_{g\in G} M^{\frac14}_2
\]
where we take $M_1, M_2$ as degree $0$ positive operators on $H\hat\otimes H_1\otimes L^2(G)$ which we obtain by applying the usual Technical Theorem \cite[Theorem 1.4]{Ka88} so that $M_1+M_2=1$ and for $i=1,2$,
\begin{enumerate}
\item $g(M_i)-M_i \in K(H\hat\otimes H_1\otimes L^2(G))$,
\item $M_1x \in K(H\hat\otimes H_1\otimes L^2(G)) $ for any $x$ in $A_1$,
\item $M_2x \in K(H\hat\otimes H_1\otimes L^2(G)) $ for any $x$ in $A_2$,
\item $[M_i, x] \in K(H\hat\otimes H_1\otimes L^2(G)) $ for any $x$ in $\Delta$,
\end{enumerate}
for the separable $G$-$\Calg$ $A_1$ generated by $C_0(G, K(H))$ and the product $C_0(G, K(H))\cdot C_0(E)\rtimes_rG$, the separable $G$-$\Calg$ $A_2$ generated by the operators $(1\hat\otimes(1-T^2_1))_{g \in G}$, $(1\hat\otimes(g^{-1}(T_1)-T_1))_{g\in G}$ and $(1\hat\otimes(h(T_1)-T_1))_{g\in G}$ for $h$ in $G$, and for the separable subset 
\[
\Delta=C_0(E)\rtimes_rG \cup \{(g(T)\hat\otimes1)_{g\in G}, (1\hat\otimes T_1)_{g \in G}\}
\] which derives $A_1$. Notice, we included the extra condition 
\begin{enumerate}[(iii)']
\item  $M_2 (1\hat\otimes(g^{-1}(T_1)-T_1))_{g\in G} \in K(H\hat\otimes H_1\otimes L^2(G))$
\end{enumerate}
for $M_2$ which is not necessary if we just want to produce the Kasparov product. Thanks to this extra condition (iii)', we see that the operator $F$ is equal to
\[
F' = M^{\frac14}_1(g(T)\hat\otimes1)_{g\in G}M^{\frac14}_1 + M^{\frac14}_2(1\hat\otimes g^{-1}(T_1))_{g\in G} M^{\frac14}_2
\]
modulo compact operators. Thus, we see that the product $\bar x \otimes_\mathbb{C}y$ is represented by the $G$-equivariant Fredholm $C_0(E)\rtimes_rG$-module
\begin{equation}\label{triple1}
((\pi_0\otimes1\otimes1)\rtimes(\pi_{G, H}\otimes1\otimes \lambda_G), H\hat\otimes H_1\otimes L^2(G) , F').
\end{equation}
Now, we use the following isomorphism of the Hilbert space $H\hat\otimes H_1\otimes L^2(G)$
\[
U\colon v\otimes v_1\otimes \delta_g \to   v\otimes gv_1\otimes \delta_g \,\,\, \text{for $v\otimes v_1\otimes \delta_g$ in $H\hat\otimes H_1\otimes L^2(G)$}.
\]
By the conjugation by $U$, the triple \eqref{triple1} is isomorphic to 
\begin{equation}\label{triple2}
((\pi_0\otimes1\otimes1)\rtimes(\pi_{G, H}\otimes\pi_{G, H_1}\otimes \lambda_G), H\hat\otimes H_1\otimes L^2(G) , UF'U^\ast)
\end{equation}
where now the $G$-Hilbert space structure on $H\hat\otimes H_1\otimes L^2(G)$ is given by the tensor product representation $1\otimes1\otimes \rho_G$ of the trivial action on $H\hat\otimes H_1$ and the right-regular representation of $G$ on $L^2(G)$. We have
\[
UF'U^\ast = M^{\frac14}_3(g(T)\hat\otimes1)_{g\in G}M^{\frac14}_3 + M^{\frac14}_4(1\hat\otimes T_1)_{g\in G} M^{\frac14}_4.
\] 
where $M_i=UM_iU^\ast$. On the other hand, it is now not so hard to see that the product $j^G_r(\sigma_{C_0(E)}(y))\otimes_{C_0(E)\rtimes_rG} \bar x$ is represented by the triple \eqref{triple2}. 
\end{proof}

\begin{corollary}\label{cor_proper_product} Let $x$ be an element in $R(G)$ which is represented by a proper cycle. Let $y$ be an element in $R(G)$ such that $y=1_K$ in $R(K)$ for any compact subgroup $K$ of $G$. Then, we have \[
y\otimes_{\mathbb{C}}x = x\otimes_{\mathbb{C}}y= x.
\]
\end{corollary}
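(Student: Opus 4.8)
The plan is to derive the corollary formally from Propositions~\ref{prop_barcycle_back} and \ref{prop_proper_product}, the associativity of the Kasparov product, and one genuinely non-formal input: the classical fact that an element of $R(G)$ which restricts to the identity on every compact subgroup acts as the identity on the left-hand side $KK^G_\ast(C_0(E),\,\cdot\,)$ of Baum--Connes. Since $R(G)$ is commutative we have $x\otimes_{\mathbb{C}}y=y\otimes_{\mathbb{C}}x$, so it suffices to prove $x\otimes_{\mathbb{C}}y=x$.

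First I would fix a proper cycle $(H,T)$ representing $x$, let $\bar x\in KK^G(C_0(E)\rtimes_rG,\mathbb{C})$ be the element constructed in Proposition~\ref{prop_barcycle}, and let $[p_c]\in KK(\mathbb{C},C_0(E)\rtimes_rG)$ be the class of the cut-off projection. Using $x=[p_c]\otimes_{C_0(E)\rtimes_rG}\bar x$ (Proposition~\ref{prop_barcycle_back}), then associativity of the Kasparov product, then $\bar x\otimes_{\mathbb{C}}y=j^G_r(\sigma_{C_0(E)}(y))\otimes_{C_0(E)\rtimes_rG}\bar x$ (Proposition~\ref{prop_proper_product}), and then associativity once more, I compute
\begin{align*}
x\otimes_{\mathbb{C}}y
&=\bigl([p_c]\otimes_{C_0(E)\rtimes_rG}\bar x\bigr)\otimes_{\mathbb{C}}y
=[p_c]\otimes_{C_0(E)\rtimes_rG}\bigl(\bar x\otimes_{\mathbb{C}}y\bigr)\\
&=[p_c]\otimes_{C_0(E)\rtimes_rG}\bigl(j^G_r(\sigma_{C_0(E)}(y))\otimes_{C_0(E)\rtimes_rG}\bar x\bigr)
=\bigl([p_c]\otimes_{C_0(E)\rtimes_rG}j^G_r(\sigma_{C_0(E)}(y))\bigr)\otimes_{C_0(E)\rtimes_rG}\bar x .
\end{align*}
Thus the whole statement reduces to the single identity
\[
[p_c]\otimes_{C_0(E)\rtimes_rG}j^G_r(\sigma_{C_0(E)}(y))=[p_c]\quad\text{in }KK(\mathbb{C},C_0(E)\rtimes_rG).
\]

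To establish this I would use the hypothesis $y=1_K$ in $R(K)$ for every compact subgroup $K$ of $G$ to conclude that the amplification $\sigma_{C_0(E)}(y)$ equals $1_{C_0(E)}$ in $KK^G(C_0(E),C_0(E))$ --- equivalently, that $y$ acts as the identity on $KK^G_\ast(C_0(E),A)$ for every $G$-$\Calg$ $A$ (to recover the former from the latter one tests on $A=C_0(E)$ and $1_{C_0(E)}$). Granting this, $j^G_r(\sigma_{C_0(E)}(y))=j^G_r(1_{C_0(E)})=1_{C_0(E)\rtimes_rG}$, so the displayed identity is immediate and the chain above collapses to $x\otimes_{\mathbb{C}}y=[p_c]\otimes_{C_0(E)\rtimes_rG}\bar x=x$ by Proposition~\ref{prop_barcycle_back}.

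The sole obstacle is the statement $\sigma_{C_0(E)}(y)=1_{C_0(E)}$; everything else is bookkeeping with the Kasparov product. This is precisely the ``local $\Rightarrow$ global'' mechanism underlying the fact that the existence of a $\gamma$-element forces split-injectivity of $\mu^G_A$, and I would either cite it from the standard references (e.g.\ \cite{Ka88}, \cite{Tu99}) or reprove it for the co-compact $E=\underline{E}G$ at hand: decompose $C_0(E)$ along a finite cover of $E$ by slices of the form $G\times_{K_i}V_i$ with $K_i$ compact (a finite $G$-CW decomposition with finite cell stabilizers when $G$ is discrete), and run a Mayer--Vietoris/excision induction in which each building block is of the form $KK^{K_i}(C_0(V_i),\,\cdot\,)$, on which $y$ acts through $y|_{K_i}=1_{K_i}=\mathrm{id}$.
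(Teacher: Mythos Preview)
Your proof is correct and follows essentially the same route as the paper's: both use Proposition~\ref{prop_barcycle_back}, then Proposition~\ref{prop_proper_product}, and then the key non-formal input $\sigma_{C_0(E)}(y)=1_{C_0(E)}$ (for which the paper cites \cite[Corollary~7.2]{MN06}). The only cosmetic difference is that the paper concludes directly from $j^G_r(\sigma_{C_0(E)}(y))=\mathrm{id}_{C_0(E)\rtimes_rG}$ that $\bar x\otimes_{\mathbb{C}}y=\bar x$, rather than re-associating once more to reduce to $[p_c]\otimes_{C_0(E)\rtimes_rG}j^G_r(\sigma_{C_0(E)}(y))=[p_c]$; your extra step is harmless but unnecessary.
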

\begin{proof} The Kasparov product is graded commutative and associative. Thus, combined with Proposition \ref{prop_barcycle_back}, we have
\[
y\otimes_{\mathbb{C}}x = x\otimes_{\mathbb{C}}y= ([p_c]\otimes_ {C_0(E)\rtimes_rG} \bar x)\otimes_{\mathbb{C}}y = [p_c] \otimes_ {C_0(E)\rtimes_rG}(\bar x \otimes_{\mathbb{C}}y).
\]
By Proposition \ref{prop_proper_product}, we have
\[
\bar x \otimes_{\mathbb{C}}y =  j^G_r(\sigma_{C_0(E)}(y))\otimes_{C_0(E)\rtimes_rG} \bar x = \bar x
\]
since the condition $y=1_K$ in $R(K)$ for any compact subgroup $K$ of $G$ implies that $\sigma_{C_0(E)}(y)=\mathrm{id}_{C_0(E)}$ and hence $j^G_r(\sigma_{C_0(E)}(y))=\mathrm{id}_{C_0(E)\rtimes_rG}$ (see \cite[Corollary 7.2]{MN06} for this fact). Thus, we have
\[
y\otimes_{\mathbb{C}}x = x\otimes_{\mathbb{C}}y= [p_c] \otimes_ {C_0(E)\rtimes_rG}(\bar x \otimes_{\mathbb{C}}y) =  [p_c] \otimes_ {C_0(E)\rtimes_rG}\bar x = x. 
\]
\end{proof}

\begin{theorem}\label{thm_unique} Let $x_1$ and $x_2$ be elements in $R(G)$ which are represented by cycles with property $(\gamma)$. Then, we have
\[
x_1=x_2.
\]
That is, a $G$-equivariant Kasparov cycle with property $(\gamma)$ is unique up to homotopy.
\end{theorem}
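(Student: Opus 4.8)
The plan is to reduce everything to Corollary \ref{cor_proper_product}. The point is that property $(\gamma)$ packages together two features: a cycle $(H,T)$ with property $(\gamma)$ is \emph{proper} (condition (ii), i.e. (ii.i) and (ii.ii), is exactly Definition \ref{def_proper}), and it satisfies $[H,T]=1_K$ in $R(K)$ for every compact subgroup $K$ of $G$ (condition (i)). So each of $x_1$ and $x_2$ simultaneously plays the role of ``$x$'' (an element represented by a proper cycle) and the role of ``$y$'' (an element restricting to $1_K$ on every compact subgroup) in the hypothesis of Corollary \ref{cor_proper_product}.

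First I would apply Corollary \ref{cor_proper_product} with $x=x_1$ (represented by a proper cycle, since it has property $(\gamma)$) and $y=x_2$ (which equals $1_K$ in $R(K)$ for every compact $K$, again by property $(\gamma)$). This gives
\[
x_2\otimes_{\mathbb{C}}x_1 = x_1\otimes_{\mathbb{C}}x_2 = x_1.
\]
Next I would apply Corollary \ref{cor_proper_product} with the roles reversed, i.e. $x=x_2$ and $y=x_1$, obtaining
\[
x_1\otimes_{\mathbb{C}}x_2 = x_2\otimes_{\mathbb{C}}x_1 = x_2.
\]
Comparing the two chains of equalities yields $x_1 = x_1\otimes_{\mathbb{C}}x_2 = x_2$, which is the claim; the final sentence of the statement (uniqueness up to homotopy of the cycle) is just the reformulation that two cycles defining the same class in $R(G)=KK^G(\mathbb{C},\mathbb{C})$ are homotopic by definition of $R(G)$.

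There is essentially no obstacle here beyond having Corollary \ref{cor_proper_product} in hand; the only thing to be careful about is making sure both hypotheses of that corollary are genuinely supplied by property $(\gamma)$ for each of $x_1,x_2$ — which they are, since (i) gives the ``$1_K$ on compact subgroups'' condition and (ii) gives properness — and noting that the Kasparov product $\otimes_{\mathbb{C}}$ on $R(G)$ is (graded) commutative, so that the two applications are consistent. All the real work has already been done in establishing Propositions \ref{prop_barcycle}, \ref{prop_barcycle_back}, \ref{prop_proper_product} and Corollary \ref{cor_proper_product}.
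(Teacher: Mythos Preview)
Your proposal is correct and is essentially identical to the paper's own proof: the paper also applies Corollary \ref{cor_proper_product} twice, once with $x=x_1,\ y=x_2$ and once with $x=x_2,\ y=x_1$, and concludes $x_1=x_1\otimes_{\mathbb{C}}x_2=x_2$.
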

\begin{proof} Using Corollary \ref{cor_proper_product} for $x=x_1, y=x_2$ and for $x=x_2, y=x_1$, we have
\[
x_1= x_1\otimes_{\mathbb{C}}x_2= x_2.
\] \end{proof}
Thanks to Theorem \ref{thm_unique}, the following notion is well-defined.
\begin{definition} \label{def_gamma_element} Suppose there is a $G$-equivariant Kasparov cycle $(H, T)$ with property $(\gamma)$. We define the $(\gamma)$-element for $G$ to be the unique element in $R(G)$ which is represented by a $G$-equivariant  Kasparov cycle $(H, T)$ with property $(\gamma)$.
\end{definition}

Thus, if the $\gamma$-element exists for a group $G$, the two notions of $\gamma$-element and $(\gamma)$-element coincide, i.e. $\gamma$ is the $(\gamma)$-element.

\begin{theorem}\label{thm_idempotent}  The $(\gamma)$-element is an idempotent in the Kasparov ring $R(G)$.
\end{theorem}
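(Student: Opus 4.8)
The plan is to deduce this immediately from Corollary \ref{cor_proper_product}. Write $x$ for the $(\gamma)$-element and fix a $G$-equivariant Kasparov cycle $(H, T)$ with property $(\gamma)$ representing it. The key observation is that property $(\gamma)$ supplies at once the two hypotheses needed to feed $x$ into Corollary \ref{cor_proper_product} in \emph{both} of its argument slots: condition (ii) of Definition \ref{def_gamma'} says precisely that $(H, T)$ is a proper cycle in the sense of Definition \ref{def_proper}, so $x$ is represented by a proper cycle; and condition (i) says that $[H, T] = 1_K$ in $R(K)$ for every compact subgroup $K$ of $G$, which is exactly the constraint placed on the second argument in Corollary \ref{cor_proper_product}.

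First I would apply Corollary \ref{cor_proper_product} with the choice $x := x$ and $y := x$. Since this single cycle simultaneously witnesses that $x$ is represented by a proper cycle and that $x = 1_K$ in $R(K)$ for all compact $K$, the corollary applies and yields
\[
x \otimes_{\mathbb{C}} x = x,
\]
which is the assertion that $x$ is an idempotent in the Kasparov ring $R(G)$.

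There is essentially no obstacle here once Corollary \ref{cor_proper_product} is available; the only point worth stating explicitly is that a cycle with property $(\gamma)$ plays both roles at the same time. All the substantive content — Propositions \ref{prop_barcycle}, \ref{prop_barcycle_back} and \ref{prop_proper_product}, together with the fact from \cite{MN06} that $y = 1_K$ in $R(K)$ for all compact $K$ forces $j^G_r(\sigma_{C_0(E)}(y)) = \mathrm{id}_{C_0(E)\rtimes_rG}$ — has already been packaged into Corollary \ref{cor_proper_product}, so nothing further needs to be proved.
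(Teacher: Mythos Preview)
Your proof is correct and follows exactly the same approach as the paper: apply Corollary \ref{cor_proper_product} with $x=y$ equal to the $(\gamma)$-element, using condition (ii) of property $(\gamma)$ for properness and condition (i) for the $1_K$ hypothesis. The paper's proof is the one-line version of what you wrote.
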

\begin{proof} Using Corollary \ref{cor_proper_product} for $x=y=x_1$, we have
\[
x_1\otimes_{\mathbb{C}}x_1= x_1.
\]
\end{proof}

\section{$(\gamma)$-morphism} For a $G$-equivariant Kasparov cycle $(H, T)$ with property $(\gamma)$, we shall construct a natural map of Kasparov's $G$-equivariant $KK$-theory 
\[
\nu^{G, T}_A\colon KK_\ast(\mathbb{C}, A\rtimes_rG) \to KK^G_\ast(C_0(E), A)
\]
for any separable $G$-$\Calg$ $A$. Let $(H, T)$ be a $G$-equivariant Kasparov cycle with property $(\gamma)$ and denote by $\pi_0$, the representation of $C_0(E)$ on $H$ witnessing the condition (ii) for property $(\gamma)$. We also set $\pi_{G, H}$ to be the representation of $G$ on $H$. Let $\tilde H$ be the tensor product
\[
\tilde H = H\otimes L^2(G)
\]
with the $G$-Hilbert space structure given by the tensor product representation $\pi_{G, H}\otimes\lambda_G$ of the representation $\pi_{G, H}$ and the left-regular representation. We equip $\tilde H$ with the $G$-equivariant  representation $\pi_0\otimes \rho^G$ of $C_0(E)\otimes C^\ast_r(G)$ where $\rho^G$ is the right regular representation of $C^\ast_r(G)$ on $L^2(G)$. Define an odd, self-adjoint $G$-equivariant operator $\tilde T$ on $\tilde H$ by
\[
\tilde T = (g(T))_{g\in G}.
\]
The proof of the following is analogous to the one for Proposition \ref{prop_barcycle}, so we leave it to the reader.
\begin{proposition} \label{prop_tildecycle} The triple $(\pi_0\otimes\rho^G, \tilde H, \tilde T)$ is a $G$-equivariant Fredholm $C_0(E)\otimes C^\ast_r(G)$-module.
\end{proposition}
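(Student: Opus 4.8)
The plan is to verify the three defining conditions of a $G$-equivariant Fredholm $C_0(E)\otimes C^\ast_r(G)$-module for the triple $(\pi_0\otimes\rho^G,\tilde H,\tilde T)$, closely mirroring the proof of Proposition \ref{prop_barcycle}; as there, I would spell out the discrete case, the general locally compact case following by the same approximation arguments. First one disposes of the harmless points: $\pi_0\otimes\rho^G$ really is a representation of the tensor product, since $C_0(E)$ is nuclear and $\pi_0(\phi)\otimes1$ commutes with $1\otimes\rho^G(f)$ on $\tilde H=H\otimes L^2(G)$; and it is $G$-equivariant for the given action on $C_0(E)$ and the trivial action on $C^\ast_r(G)$, the only point worth checking being $\lambda_G(g)\rho^G(f)\lambda_G(g)^\ast=\rho^G(f)$, which holds because $\lambda_G$ and $\rho_G$ commute.

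Identifying $\tilde H$ with $L^2(G,H)$ so that $(\tilde T\xi)(g)=g(T)\xi(g)$, a short computation shows that $\tilde T$ commutes \emph{exactly} with $\pi_{G,H}\otimes\lambda_G$; hence $g(\tilde T)=\tilde T$ for all $g$, and the almost-$G$-equivariance condition $b(g(\tilde T)-\tilde T)\in K(\tilde H)$ holds trivially. For the Fredholm condition it suffices to take $b=\pi_0(\phi)\otimes\rho^G(f)$ with $\phi\in C_c(E)$ and $f\in C_c(G)$, and then to compute
\[
b(1-\tilde T^2)=(1\otimes\rho^G(f))\,\mathrm{diag}\big(\pi_0(\phi)\,g(1-T^2)\big)_{g\in G}.
\]
Since $1-T^2\in K(H)$, Lemma \ref{lem_nondegen} gives $\|\pi_0(\phi)\,g(1-T^2)\|\to0$ as $g\to\infty$, so the diagonal operator on $\ell^2(G,H)$ has compact entries whose norms vanish at infinity and is therefore compact; hence $b(1-\tilde T^2)\in K(\tilde H)$.

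The pseudo-locality condition is where a little care is needed, because --- unlike $\bar T$ in Proposition \ref{prop_barcycle} --- the operator $\tilde T$ does not commute with $\rho^G$. For $b=\pi_0(\phi)\otimes\rho^G(f)$ I would write
\[
[b,\tilde T]=(\pi_0(\phi)\otimes1)\,[1\otimes\rho^G(f),\tilde T]+[\pi_0(\phi)\otimes1,\tilde T]\,(1\otimes\rho^G(f)).
\]
In the first summand, $(\pi_0(\phi)\otimes1)\,[1\otimes\rho^G(f),\tilde T]$ is a finite linear combination, over $h\in\operatorname{supp} f$, of the translation $1\otimes\rho_G(h)$ composed with $\mathrm{diag}\big(\pi_0(\phi)\,g(h(T)-T)\big)_{g\in G}$; each such diagonal operator is compact because, by Lemma \ref{lem_nondegen} applied to the compact operator $h(T)-T$, its entries are compact with norms vanishing at infinity (uniformly for $h\in\operatorname{supp} f$). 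In the second summand, $[\pi_0(\phi)\otimes1,\tilde T]=\mathrm{diag}\big([\pi_0(\phi),g(T)]\big)_{g\in G}$, and the identity $[\pi_0(\phi),g(T)]=g\big([g^{-1}(\phi),T]\big)$ together with condition (ii.i) of property $(\gamma)$ shows that $[\pi_0(\phi),g(T)]$ is compact with norm $\|[g^{-1}(\phi),T]\|\to0$ as $g\to\infty$; hence this diagonal operator is compact, and so is the whole summand after right multiplication by the bounded operator $1\otimes\rho^G(f)$. Therefore $[b,\tilde T]\in K(\tilde H)$, and by density this holds for all $b\in C_0(E)\otimes C^\ast_r(G)$.

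The only genuine obstacle is the passage from discrete to general locally compact $G$: the principle ``a diagonal operator on $\ell^2(G,H)$ with norm-vanishing compact entries is compact'' fails for non-discrete $G$, so each compactness claim above must instead be obtained by the device already used implicitly in Proposition \ref{prop_barcycle} --- approximating $1-T^2$, the operators $h(T)-T$, and the commutators $[g^{-1}(\phi),T]$ in norm by $\pi_0$-localized, eventually finite-rank operators, and invoking properness of $E$ to make the resulting integral kernels compactly supported up to arbitrarily small norm error. This introduces no idea beyond the proof of Proposition \ref{prop_barcycle}, which is exactly why the verification can reasonably be left to the reader.
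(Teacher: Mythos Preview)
Your proof is correct and follows exactly the route the paper intends: it leaves the proof as ``analogous to Proposition \ref{prop_barcycle}'', and you have carried out that analogy faithfully, including the key observation that the roles of almost-$G$-equivariance and pseudo-locality with respect to the group part are interchanged (since $\tilde T$ now commutes with the $G$-action $\pi_{G,H}\otimes\lambda_G$ rather than with the algebra representation $\rho^G$). Your treatment of the discrete case and your remarks on the passage to general $G$ match the level of detail in the paper's own proof of Proposition \ref{prop_barcycle}.
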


\begin{remark} We only need the condition (ii.i) of property $(\gamma)$ for a cycle $(H, T)$ to define the Fredholm $C_0(E)\otimes C^\ast_r(G)$-module $(\pi_0\otimes\rho^G, \tilde H, \tilde T)$.
\end{remark}

Let us denote by $[\tilde T]$ the class in $KK^G(C_0(E)\otimes C^\ast_r(G), \mathbb{C})$ represented by the triple $(\pi_0\otimes\rho^G, \tilde H, \tilde T)$.
For any separable $G$-$\Calg$ $A$, in above construction, if we replace $L^2(G)$ by the $G$-Hilbert right $A$-module $L^2(G,A)$ and $\rho^G$ by the right regular representation $\rho^G_A$:
\[
a \mapsto (g(a))_{g\in G}, \,\,\, h \mapsto (\rho_h\colon (a_g)_{g\in G}\mapsto (a_{gh})_{g\in G}) \,\,\, \text{for $a\in A$, $h \in G$}
\]
of the reduced crossed product $A\rtimes_rG$ with trivial $G$-action, we can define a $G$-equivariant Fredholm $C_0(E)\otimes (A\rtimes_rG)$-$A$-module which defines the class $[\tilde T_A]$ in $KK^G(C_0(E)\otimes A\rtimes_rG, A)$.
\begin{definition}\label{def_gamma'_mor} Let $(H, T)$ be a Kasparov cycle with property $(\gamma)$. For any separable $G$-$\Calg$ $A$, we define a group homomorphism
\[
\nu^{G, T}_A\colon KK_\ast(\mathbb{C}, A\rtimes_rG) \to KK^G_\ast(C_0(E), A)
\]
as the one induced by the class $[\tilde T_A]$ in $KK^G(C_0(E)\otimes A\rtimes_rG, A)$ via the index pairing:
\[
KK_\ast(\mathbb{C}, A\rtimes_rG) \times KK^G(C_0(E)\otimes A\rtimes_rG, A) \to KK^G_\ast(C_0(E), A).
\]
We call this map $\nu^{G, T}_A$, the $(\gamma)$-morphism defined by $(H, T)$.
\end{definition}

\begin{remark} Again, the definition of the $(\gamma)$-morphism requires only the condition (ii.i) of property $(\gamma)$. Also, the definition implicitly depends not only on $(H, T)$ but also on the representation $\pi_0$ witnessing property $(\gamma)$.
\end{remark}

The $(\gamma)$-morphism is natural in the following sense.
\begin{proposition} \label{prop_natural} Let $(H, T)$ be a $G$-equivariant Kasparov cycle with property $(\gamma)$. For any element $\theta$ in $KK^G(A, B)$, the following diagram is commutative:
\begin{align*}
\xymatrix{
\nu^{G, T}_{A}\colon KK_\ast(\mathbb{C}, A\rtimes_rG) \ar[d]^{\theta\rtimes_r1_\ast} \ar[r] & KK^G_\ast(C_0(E), A) \ar[d]^{\theta_\ast}  \\
\nu^{G, T}_{B}\colon KK_\ast(\mathbb{C}, B\rtimes_rG) \ar[r] & KK^G_\ast(C_0(E), B)
}
\end{align*} 
where the horizontal maps are the $(\gamma)$-morphisms and the vertical ones are the ones induced by $\theta$.
\end{proposition}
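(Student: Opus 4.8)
\emph{Plan of proof.} By Definition \ref{def_gamma'_mor}, for a separable $G$-$\Calg$ $A$ the $(\gamma)$-morphism $\nu^{G,T}_A$ is the map
\[
KK_\ast(\mathbb{C}, A\rtimes_rG)\ni z \;\longmapsto\; \sigma_{C_0(E)}(z)\otimes_{C_0(E)\otimes A\rtimes_rG}[\tilde T_A] \in KK^G_\ast(C_0(E), A),
\]
where $\sigma_{C_0(E)}$ is the external product with $1_{C_0(E)}$. All four arrows in the square are therefore Kasparov products with fixed classes, so the first step is to expand the two composites. One of them equals $\sigma_{C_0(E)}(z)\otimes_{C_0(E)\otimes A\rtimes_rG}\bigl([\tilde T_A]\otimes_A\theta\bigr)$. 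For the other, one uses that the external product is compatible with the composition product, $\sigma_{C_0(E)}\bigl(z\otimes_{A\rtimes_rG}j^G_r(\theta)\bigr)=\sigma_{C_0(E)}(z)\otimes_{C_0(E)\otimes A\rtimes_rG}\sigma_{C_0(E)}(j^G_r(\theta))$, and then associativity of the Kasparov product to rewrite it as $\sigma_{C_0(E)}(z)\otimes_{C_0(E)\otimes A\rtimes_rG}\bigl(\sigma_{C_0(E)}(j^G_r(\theta))\otimes_{C_0(E)\otimes B\rtimes_rG}[\tilde T_B]\bigr)$. Hence the proposition follows once one establishes the single identity
\[
[\tilde T_A]\otimes_A\theta \;=\; \sigma_{C_0(E)}(j^G_r(\theta))\otimes_{C_0(E)\otimes B\rtimes_rG}[\tilde T_B] \qquad\text{in } KK^G(C_0(E)\otimes A\rtimes_rG, B),
\]
which expresses a functoriality of the construction $A\mapsto[\tilde T_A]$ in $A$.

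To prove this identity I would represent $\theta$ by a $G$-equivariant Kasparov $A$-$B$-module $(\mathcal{E},\pi_\theta,F_\theta)$ and compute both Kasparov products with explicit representatives; as elsewhere in the paper it suffices to treat discrete $G$, the general case being analogous. The left-hand side lives on the graded tensor product $(H\otimes L^2(G,A))\otimes_A\mathcal{E}\cong H\otimes\bigl(L^2(G,A)\otimes_A\mathcal{E}\bigr)$, with $C_0(E)$ acting through $\pi_0$ on the $H$-factor, $A\rtimes_rG$ acting through $\rho^G_A$, and a product operator of the form $M_1^{1/4}\,\tilde T_A\,M_1^{1/4}+M_2^{1/4}\,(1\otimes F_\theta)\,M_2^{1/4}$ for positive $M_1,M_2$ with $M_1+M_2=1$ from Kasparov's Technical Theorem. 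The right-hand side lives on $(\mathcal{E}\rtimes_rG)\otimes_{B\rtimes_rG}\bigl(H\otimes L^2(G,B)\bigr)\cong H\otimes\bigl((\mathcal{E}\rtimes_rG)\otimes_{B\rtimes_rG}L^2(G,B)\bigr)$, again with $C_0(E)$ acting through $\pi_0$ on $H$ and a product operator of the same shape (with the roles of $M_1$ and $M_2$ interchanged). The crux is the canonical $G$-equivariant unitary of Hilbert $B$-modules
\[
L^2(G,A)\otimes_A\mathcal{E} \;\cong\; (\mathcal{E}\rtimes_rG)\otimes_{B\rtimes_rG}L^2(G,B)
\]
that underlies the construction of the descent map $j^G_r$: under it the two $C_0(E)$-representations agree (both being $\pi_0\otimes1$ on the untouched $H$-factor), the diagonal-in-$g$ operator $\tilde T_A\otimes_A1$ is carried to the corresponding operator built from $\tilde T_B$, and $1\otimes F_\theta$ is carried to the amplified and descended $F_\theta$. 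Thus the right-hand module, with its operator, satisfies the connection and positivity conditions for a Kasparov product of $[\tilde T_A]$ with $\theta$, and by the uniqueness of the Kasparov product the identity follows.

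The step I expect to be the main obstacle is this last one: tracking the $G$-equivariance twists through the identification $L^2(G,A)\otimes_A\mathcal{E}\cong(\mathcal{E}\rtimes_rG)\otimes_{B\rtimes_rG}L^2(G,B)$, checking that the $\rho^G$-representations and the operators $(g(T))_{g\in G}$ and $F_\theta$ land where claimed, and verifying the $F_\theta$-connection and positivity conditions. None of this is conceptually deep, but it is the bulk of the work. Carrying the argument out in the Kasparov-module picture, rather than first reducing $\theta$ to a $\ast$-homomorphism, keeps everything uniform in $\theta$ and makes the needed compatibility with composition of $KK$-classes (hence with the functoriality of $j^G_r$) transparent; recall moreover that only condition (ii.i) of property $(\gamma)$ enters here, so the statement is genuinely one about the classes $[\tilde T_A]$.
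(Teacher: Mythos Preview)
Your reduction of the square to the single identity
\[
[\tilde T_A]\otimes_A\theta \;=\; \sigma_{C_0(E)}\bigl(j^G_r(\theta)\bigr)\otimes_{C_0(E)\otimes B\rtimes_rG}[\tilde T_B]
\]
is correct, and your plan to verify it by writing down explicit Kasparov products on both sides, transporting one to the other through the standard identification $L^2(G,A)\otimes_A\mathcal{E}\cong(\mathcal{E}\rtimes_rG)\otimes_{B\rtimes_rG}L^2(G,B)$, and invoking uniqueness of the Kasparov product, is a valid strategy. The bookkeeping you flag (the $G$-equivariance twists, matching $(g(T))_{g\in G}$ and the $F_\theta$-connection under the identification) is genuinely the bulk of the work, but nothing fails.

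The paper, however, takes a much shorter and rather different route: it checks the square directly when $\theta$ is a $G$-equivariant $\ast$-homomorphism (where the cycle $[\tilde T_A]$ is literally pushed forward along $\theta$, so there is no Kasparov product to compute), and then invokes Meyer's theorem \cite[Theorem~6.5]{Mey00} that every class in $KK^G(A,B)$ is a composite of $\ast$-homomorphisms and inverses of $\ast$-homomorphisms in the category $KK^G$. Since a commuting square for $\theta$ automatically yields one for $\theta^{-1}$, this finishes the proof in two lines. Your approach avoids this external structural result and is uniform in $\theta$, but at the cost of carrying out an explicit Kasparov product computation with the Technical Theorem; the paper trades that computation for a black-box citation. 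It is a bit ironic that you explicitly set aside the reduction to $\ast$-homomorphisms at the end of your proposal, since that is precisely the shortcut the paper uses.
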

\begin{proof} We can directly check this for the case when $\theta$ is a $G$-equivariant $\ast$-homomorphism from $A$ to $B$. The general case follows since any element in $KK^G(A, B)$ is the composition of $\ast$-homomorphisms and the inverse of $\ast$-homomorphisms in the category $KK^G$ (See Theorem 6.5 of \cite{Mey00}).
\end{proof}
\section{The Main Results}
As a simple application of our construction of the $(\gamma)$-morphism, we get some results on the Baum--Connes conjecture. For a separable $G$-$\Calg$ $A$, there is the so-called Baum-Connes assembly map:
\[
\mu^G_A\colon KK^G_\ast(C_0(E), A) \to KK_\ast(\mathbb{C}, A\rtimes_rG).
\]
\begin{conjecture} (Baum--Connes conjecture with coefficients for $G$, Baum, Connes and Higson \cite{BCH94}) The map $\mu^G_A$ is an isomorphism for any $A$.
\end{conjecture}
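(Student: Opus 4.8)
The plan is to deduce the conjecture from the machinery assembled above, reducing the isomorphism question for $\mu^G_A$ to the construction and analysis of a single $G$-equivariant Kasparov cycle. First I would attempt to produce a cycle $(H,T)$ with property $(\gamma)$: one must build a separable, graded $G$-Hilbert space $H$ carrying a non-degenerate $G$-equivariant representation of $C_0(E)$, together with an odd, bounded, self-adjoint operator $T$ satisfying the Fredholm and almost-$G$-equivariance conditions, condition (i) that $[H,T]=1_K$ in $R(K)$ for every compact subgroup $K$, and conditions (ii.i) and (ii.ii). For a group acting properly and cocompactly on a suitable proper space $E$ (for instance a finite-dimensional CAT(0) cubical complex, as in \cite{BGHN19}), such a cycle can be assembled from the geometry of $E$; in the presence of an actual $\gamma$-element, Theorem \ref{thm_implies} already guarantees a representative with property $(\gamma)$.

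Granting such a cycle, the main Theorem (Theorem \ref{thm_BCCmain}) immediately gives that the $(\gamma)$-morphism $\nu^{G,T}_A$ is a left inverse of $\mu^G_A$ for every separable $G$-$\Calg$ $A$, so $\mu^G_A$ is at least split-injective. To upgrade this to an isomorphism, part (iii) of that Theorem reduces the problem to showing that $(H,T)$ acts as the identity on the right-hand side group $KK_\ast(\mathbb{C}, A\rtimes_rG)$ through the composition \eqref{eq_comp}. Equivalently, by Corollary \ref{cor_BCCmain2}, it suffices to produce a homotopy in $R(G)$ from the $(\gamma)$-element to the unit $1_G$; by Theorem \ref{thm_unique} this $(\gamma)$-element is canonical, so one need verify the identity action only once, for the given cycle, after which it propagates to all coefficients $A$ via the naturality established in Proposition \ref{prop_natural}.

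The main obstacle is precisely this last step: establishing that the $(\gamma)$-element equals $1_G$ in $R(G)$, or at least acts as the identity on the receiving group. This is where the genuine geometric and analytic content lies, and it is exactly the point at which the conjecture can fail for general $G$; no formal consequence of property $(\gamma)$ alone forces the identity action, since Theorem \ref{thm_idempotent} only yields idempotency, and an idempotent need not be the unit. For favourable groups the identity action would be exhibited by an explicit homotopy of cycles --- typically a rotation or a Dirac--dual-Dirac type deformation connecting $(H,T)$ to the trivial representation --- which collapses $T$ to $0$ on the trivial summand while preserving the Fredholm and almost-equivariance conditions along the path. Since a homotopy of $G$-equivariant Kasparov cycles descends compatibly through the amplification map $\sigma_A$ and the descent map $j^G_r$ appearing in \eqref{eq_comp}, such a homotopy suffices to conclude that $\nu^{G,T}_A$ is a two-sided inverse of $\mu^G_A$; constructing it, however, requires input beyond the axioms of property $(\gamma)$ and must be supplied by the specific structure of $G$ and $E$.
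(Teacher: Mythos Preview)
The statement you are addressing is a \emph{conjecture}, not a theorem; the paper does not prove it and offers no proof to compare against. Your write-up is therefore not a proof but an outline of the paper's strategy for attacking the conjecture, and in that capacity it is accurate: you correctly identify that the machinery of property $(\gamma)$ reduces the isomorphism of $\mu^G_A$ to showing that the $(\gamma)$-element acts as the identity on $KK_\ast(\mathbb{C}, A\rtimes_rG)$, and you correctly flag that this last step is the genuine obstruction, requiring input specific to $G$ and $E$ rather than following formally from the axioms.

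Two clarifications are worth making. First, the Baum--Connes conjecture with coefficients is in fact known to be \emph{false} in general: Higson, Lafforgue and Skandalis produced counterexamples (Gromov monster groups), so no proof valid for all $G$ can exist, and your acknowledgement that ``the conjecture can fail for general $G$'' is not merely a caveat but a theorem. Second, even the existence of a cycle with property $(\gamma)$ is not guaranteed for an arbitrary group $G$; your opening paragraph reads as though one can always ``attempt to produce'' such a cycle, but this too is a nontrivial hypothesis. The paper's contribution is precisely to isolate these two hypotheses --- existence of a $(\gamma)$-cycle, and its identity action on the right-hand side --- as sufficient conditions, not to discharge them in general.
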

For introductions and details of the conjecture, we refer to \cite{BCH94}, \cite{Va02} and \cite{Ec17} to name a few. The weaker assertion that the map $\mu^G_A$ is injective for any $A$ is called the strong Novikov conjecture.

Now, take any Kasparov cycle $(H, T)$ with property $(\gamma)$. We first compute the composition 
\[
\mu^G_A\circ \nu^{G, T}_A\colon KK_\ast(\mathbb{C}, A\rtimes_rG) \to KK^G_\ast(C_0(E), A) \to KK_\ast(\mathbb{C}, A\rtimes_rG)
\]
of the assembly map and the $(\gamma)$-morphism. We recall from \cite{Ka88} that any Kasparov cycle $(H, T)$ defines an endomorphism on K-theory group $KK_\ast(\mathbb{C}, A\rtimes_rG)$ via the following composition of ring homomorphisms
\begin{equation}\label{eq_comp2}
KK_\ast^G(\mathbb{C}, \mathbb{C}) \to KK_\ast^G(A, A) \to KK_\ast(A\rtimes_rG, A\rtimes_rG)
\end{equation}
where the first map is the amplification map $\sigma_A$, the second map is the descent map $j^G_r$. Let us write $j^G_r(\sigma_A([H,T]))$, the image of $[H, T]$ by the map \eqref{eq_comp2}.

\begin{proposition} \label{prop_surj} Let $(H, T)$ be a Kasparov cycle with property $(\gamma)$. For any separable $G$-$\Calg$ $A$, the composition $\mu^G_A\circ \nu^{G, T}_A$ coincides with the action of $(H, T)$ on $KK_\ast(\mathbb{C}, A\rtimes_rG)$ defined via \eqref{eq_comp2}.
\end{proposition}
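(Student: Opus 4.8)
The plan is to reduce the assertion to the computation of a single class in $KK(A\rtimes_rG,A\rtimes_rG)$, and then to evaluate that class by a cut-down argument modeled on the proof of Proposition~\ref{prop_barcycle_back}. First I would unwind the two maps. By the construction of the $(\gamma)$-morphism (Definition~\ref{def_gamma'_mor}), for $z\in KK_\ast(\mathbb{C},A\rtimes_rG)$ one has
\[
\nu^{G, T}_A(z)=\big(1_{C_0(E)}\otimes z\big)\otimes_{C_0(E)\otimes(A\rtimes_rG)}[\tilde T_A],
\]
where $1_{C_0(E)}\otimes z\in KK^G(C_0(E),C_0(E)\otimes(A\rtimes_rG))$ is the exterior product of the identity on $C_0(E)$ with $z$, and $A\rtimes_rG$ carries the trivial $G$-action. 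Since $E=\underline{E}G$ is co-compact, the assembly map is given by $\mu^G_A(y)=[p_c]\otimes_{C_0(E)\rtimes_rG}j^G_r(y)$ for $y\in KK^G_\ast(C_0(E),A)$, with $[p_c]$ the cut-off projection class of Section~3. Using the functoriality of the descent map $j^G_r$ for the Kasparov product, the untwisting isomorphism $(C_0(E)\otimes(A\rtimes_rG))\rtimes_rG\cong(C_0(E)\rtimes_rG)\otimes(A\rtimes_rG)$ (valid because $A\rtimes_rG$ is a trivial $G$-algebra), and the compatibility $j^G_r(1_{C_0(E)}\otimes z)=1_{C_0(E)\rtimes_rG}\otimes z$, one computes
\[
\mu^G_A\big(\nu^{G, T}_A(z)\big)=\big([p_c]\otimes z\big)\otimes_{(C_0(E)\rtimes_rG)\otimes(A\rtimes_rG)}j^G_r\big([\tilde T_A]\big).
\]
Sliding $z$ through the exterior product, $[p_c]\otimes z=z\otimes_{A\rtimes_rG}\big([p_c]\otimes 1_{A\rtimes_rG}\big)$, and using associativity of the Kasparov product, this equals $z\otimes_{A\rtimes_rG}\beta$, where
\[
\beta:=\big([p_c]\otimes 1_{A\rtimes_rG}\big)\otimes_{(C_0(E)\rtimes_rG)\otimes(A\rtimes_rG)}j^G_r\big([\tilde T_A]\big)\in KK(A\rtimes_rG,A\rtimes_rG).
\]
Since the endomorphism of $KK_\ast(\mathbb{C},A\rtimes_rG)$ determined by $(H,T)$ via \eqref{eq_comp2} is precisely $z\mapsto z\otimes_{A\rtimes_rG}j^G_r(\sigma_A([H,T]))$, it remains to prove $\beta=j^G_r(\sigma_A([H,T]))$.

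The second step is to evaluate $\beta$. The class $j^G_r([\tilde T_A])$ is represented by the descent of the $G$-equivariant Fredholm $C_0(E)\otimes(A\rtimes_rG)$-$A$-module underlying $[\tilde T_A]$ (the coefficient version of the triple in Proposition~\ref{prop_tildecycle}), and taking the Kasparov product of $[p_c]\otimes 1_{A\rtimes_rG}$ with it compresses the underlying Hilbert $A\rtimes_rG$-module by the projection $p_c\otimes 1$ and compresses the operator correspondingly. This is the coefficient version of the computation in the proof of Proposition~\ref{prop_barcycle_back}: one produces an explicit isomorphism of Hilbert $A\rtimes_rG$-modules between this compression and $(H\otimes A)\rtimes_rG$, the module underlying $j^G_r(\sigma_A([H,T]))$ --- for discrete $G$ a formula of the type $v\mapsto\sum_{h\in G}\pi_0(c)h(v)\otimes\delta_h$, amplified by $A$, and its integrated analogue for general $G$ --- under which the compressed operator becomes $\int_{g\in G}g(c)(T\otimes 1)g(c)\,d\mu_G(g)$. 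By condition (ii.ii) of property $(\gamma)$ this differs from the amplification of $T$ by a compact operator, so the compressed cycle represents $j^G_r(\sigma_A([H,T]))$, giving $\beta=j^G_r(\sigma_A([H,T]))$.

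I expect the second step to be the main obstacle: carrying the coefficient algebra $A$ through the crossed-product and exterior-product bookkeeping, and in particular producing the Hilbert $A\rtimes_rG$-module isomorphism above and verifying that the compressed operator agrees with the amplification of $T$ modulo compact operators. This is exactly where property $(\gamma)$ enters, through its condition (ii.ii), just as in Proposition~\ref{prop_barcycle_back}; for non-discrete $G$ the convergence of the integrals involved is handled by Lemmas~\ref{lem_nondegen_int} and~\ref{lem_comp_int}, and no new idea is needed. The first step is purely formal, using only the definitions and the standard functoriality of the amplification map $\sigma_A$, the descent map $j^G_r$, and the exterior product in equivariant $KK$-theory.
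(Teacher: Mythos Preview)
Your proposal is correct and lands on the same core computation as the paper: compressing the descended $(\gamma)$-module by the cut-off projection $p_c$, identifying the result with the module underlying $j^G_r(\sigma_A([H,T]))$ via the explicit isomorphism of Hilbert modules, and using condition~(ii.ii) to match the operators modulo compacts.

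The packaging differs slightly. The paper proceeds element-by-element: it restricts to $G$ discrete, $\ast=0$, $A=\mathbb{C}$, picks a projection $p\in M_n(C^\ast_r(G))$ representing a $K$-theory class, and tracks the cycle $(p_c\bar H,p_c\bar Tp_c)$ through the assembly map directly, leaving the general case as a ``straightforward generalization.'' You instead first reduce the statement to the identification of a single class $\beta\in KK(A\rtimes_rG,A\rtimes_rG)$ using only formal properties of exterior products, descent, and the untwisting isomorphism for trivial coefficients, and then compute $\beta$ once and for all. Your route is a bit more uniform in the coefficient algebra and avoids the special case, at the cost of the extra bookkeeping you flag in step two (carrying $A$ through the crossed-product identifications). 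Neither approach introduces a new idea beyond what is already in Proposition~\ref{prop_barcycle_back}; the essential content---the module isomorphism and the appearance of $\int_G g(c)Tg(c)\,d\mu_G(g)$---is identical.
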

\begin{proof} For the simplicity, we only give a proof for the case where $G$ is discrete, $\ast=0$ and $A=\mathbb{C}$, since it is straightforward to generalize its argument. We compute the image of (the class) of a projection $p$ in the matrix algebra $M_n(C^\ast_r(G))$ by the composition $\mu^G_{\mathbb{C}}\circ \nu^{G, T}_{\mathbb{C}}$. We write $\tilde H$, $\tilde T$ and $\rho^G$ as in Section 4. Let $\pi_0$ be the representation of $C_0(E)$ on $H$ witnessing the condition (ii) of property ($\gamma$) for the cycle $(H, T)$. First of all, $\nu^{G, T}_{\mathbb{C}}(p)$ in $KK^G(C_0(E), \mathbb{C})$ is represented by the following Fredholm $C_0(E)$-module ($\rho^G$ is extended to $l^2(G)\otimes \mathbb{C}^n$):
\[
(\pi_0\cdot\rho^G(p), \tilde H\otimes \mathbb{C}^n, \tilde T)
\]
where $\phi$ in $C_0(E)$ is represented by $\pi_0(\phi)\rho^G(p)$. The descent map $j^G_r$ sends this module to a Fredholm $C_0(E)\rtimes_rG$-$C^\ast_r(G)$-module
\[
((\pi_0\rtimes_r1)\cdot\rho^G(p)\rtimes1, \tilde H\otimes \mathbb{C}^n\rtimes_rG, \tilde T\rtimes_r1)
\]
where we recall that the Hilbert right $C^\ast_r(G)$-module $\tilde H\otimes \mathbb{C}^n\rtimes_rG$ is spanned by vectors of the form
\[
v\rtimes_ru_g \,\,\,\,\,  \text{for $v \in \tilde H\otimes \mathbb{C}^n$ and $g\in G$},
\]
for an operator $T$ on $\tilde H\otimes \mathbb{C}^n$, the adjointable map $T\rtimes_r1$ is defined by
\[
T\rtimes_r1\colon v\rtimes_ru_g \mapsto Tv\rtimes_ru_g
\]
and that $\pi_0\rtimes_r1$ is the representation of $C_0(E)\rtimes_rG$ which sends $\phi$ in $C_0(E)$ to $\pi_0(\phi)\rtimes_r1$ and ``sends'' $h$ in $G$ to the left multiplication $u_h$,
\[
u_h\colon v\rtimes_ru_g \mapsto h(v)\rtimes_ru_{hg}.
\]

A cut-off projection $p_c$ in $C_0(E)\rtimes_rG$ is defined as a finite-sum
\[
p_c= \sum_{g\in G}cg(c)u_g.
\]
Here, we take $c$ to be a cut-off function on $E$ witnessing the condition (ii.ii) of property ($\gamma$).
We see that the assembly map sends $\nu^{G, T}_{\mathbb{C}}(p)$ to a Fredholm $\mathbb{C}$-$C^\ast_r(G)$-module
\[
((\pi_0\rtimes_r1)(p_c)\cdot\rho^G(p)\rtimes_r1, \tilde H\otimes \mathbb{C}^n\rtimes_rG, \tilde T\rtimes_r1)
\]
where the unit in $\mathbb{C}$ is represented by the projection $(\pi_0\rtimes_r1)(p_c)\cdot\rho^G(p)\rtimes_r1$. We have an isomorphism
\begin{equation}\label{eq_isom}
H\otimes \mathbb{C}^n\rtimes_rG \cong (\pi_0\rtimes_r1)(p_c) \tilde H\otimes \mathbb{C}^n\rtimes_rG
\end{equation}
of $C^\ast_r(G)$-modules given as follows. We denote by $\delta_h$, the delta-function on $G$. The isomorphism \eqref{eq_isom} is given by the map
\[
\xi\otimes v\rtimes_ru_g \mapsto \sum_{h\in G}\pi_0(c)h(\xi)\otimes\delta_h\otimes v \rtimes_ru_{hg} \,\,\,\,\, \text{($\xi \in H$, $v \in \mathbb{C}^n$, $g \in G$)}
\]
whose inverse is given by (the restriction of)
\[
(\xi_h)_{h \in G}\otimes v \rtimes_ru_g \mapsto \sum_{h \in G}h^{-1}(\pi_0(c)\xi_h)\otimes v\rtimes_r u_{h^{-1}g} \,\,\, \text{($(\xi_h)_{h\in G} \in \tilde H$, $v \in \mathbb{C}^n$, $g \in G$)}.
\]
Under this isomorphism \eqref{eq_isom}, the restriction $(\pi_0\rtimes_r1)(p_c)\tilde T\rtimes_r1(\pi_0\rtimes_r1)(p_c)$ of $\tilde T\rtimes_r1$ on $(\pi_0\rtimes_r1)(p_c)\tilde H\otimes \mathbb{C}^n\rtimes_rG$ is identified as $T'\rtimes_r1$ on $H\otimes \mathbb{C}^n\rtimes_rG$ where we set
\[
T' = \sum_{g \in G} g(c)T g(c).
\] 
Moreover, the restriction $(\pi_0\rtimes_r1)(p_c)\rho^G(p)\rtimes_r1(\pi_0\rtimes_r1)(p_c)$ of $\rho^G(p)\rtimes_r1$ is identified as the left multiplication of $p$ on $H\otimes \mathbb{C}^n\rtimes_rG$.

We conclude that the composition $\mu^G_{\mathbb{C}}\circ \nu^{G, T}_{\mathbb{C}}$ sends the projection $p$ to the Fredholm $\mathbb{C}$-$C^\ast_r(G)$-module $(p, H\otimes\mathbb{C}^n\rtimes_rG, T'\rtimes_r1)$ where the unit of $\mathbb{C}$ acts as $p$ by the left multiplication. This is nothing but the image of $p$ by the action of $(H, T')$. By the condition (ii.ii) of property $(\gamma)$, this action coincides with that of $(H, T)$.
\end{proof}

\begin{proposition} \label{prop_inj} Let $(H, T)$ be a Kasparov cycle with property $(\gamma)$. For any separable $G$-$\Calg$ $A$, the composition $ \nu^{G, T}_A\circ \mu^G_A$ is the identity on $KK^G_\ast(C_0(E), A)$.
\end{proposition}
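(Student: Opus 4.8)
The plan is to verify that $\nu^{G,T}_A \circ \mu^G_A = \mathrm{id}$ on $KK^G_\ast(C_0(E), A)$ by a direct computation at the level of cycles, using property $(\gamma)$ — both the compactness hypotheses (ii.i), (ii.ii) and, crucially, condition (i) that $[H,T]=1_K$ on every compact subgroup $K$. As in Proposition \ref{prop_surj}, I would first reduce to the case $G$ discrete and $A=\mathbb{C}$, since the general case is a routine amplification of the same argument. So let $z\in KK^G_\ast(C_0(E),\mathbb{C})$ be represented by a Fredholm $C_0(E)$-module $(\pi, H_z, S)$. The assembly map $\mu^G_{\mathbb{C}}$ produces, from the cut-off projection $p_c$, a class in $KK_\ast(\mathbb{C}, C^\ast_r(G))$; the key point is that $\mu^G_{\mathbb{C}}(z)$ can be described concretely as $[p_c]\otimes_{C_0(E)\rtimes_r G} j^G_r(z)$, i.e. the descent of $z$ paired against the cut-off projection. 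Then $\nu^{G,T}_{\mathbb{C}}$ applied to this class is, by Definition \ref{def_gamma'_mor}, the index pairing with $[\tilde T]\in KK^G(C_0(E)\otimes C^\ast_r(G),\mathbb{C})$.

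The heart of the argument is to identify the resulting composite as $z$ itself. I would unwind the two operations into a single Kasparov product and recognize it as a product of $z$ with a $KK^G$-class built out of $(H,T)$, $\pi_0$, and $p_c$ — concretely, the class represented by the triple appearing in Proposition \ref{prop_barcycle_back} (or its $C_0(E)$-linear refinement), which by that proposition and the naturality of the $(\gamma)$-morphism (Proposition \ref{prop_natural}) reduces to multiplication by an element of $KK^G(C_0(E),C_0(E))$. Because $(H,T)$ has property $(\gamma)$ part (i), this element restricts to $1_K$ on every compact subgroup, hence — by the same fact cited in the proof of Corollary \ref{cor_proper_product}, namely \cite[Corollary 7.2]{MN06} — it equals $\mathrm{id}_{C_0(E)}$ in $KK^G(C_0(E),C_0(E))$. (This is exactly where one uses that $C_0(E)$ is a proper, co-compact $G$-algebra: such endomorphisms are detected on compact subgroups.) Therefore the composite acts as the identity on $z$.

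More explicitly, the steps in order would be: (1) reduce to $G$ discrete, $A=\mathbb{C}$; (2) write $\mu^G_{\mathbb{C}}(z) = [p_c]\otimes_{C_0(E)\rtimes_r G} j^G_r(z)$; (3) expand $\nu^{G,T}_{\mathbb{C}}\big([p_c]\otimes j^G_r(z)\big)$ using associativity of the Kasparov product and the explicit form of $[\tilde T_{\mathbb{C}}]$, grouping the $z$-dependent factors apart from the $(H,T)$-dependent factors; (4) use the isomorphism $H\rtimes_r G \cong p_c(\tilde H\rtimes_r G)$ from Proposition \ref{prop_barcycle_back} (now tensored with $H_z$) to rewrite the operator, modulo compacts, via condition (ii.ii) so that the $(H,T)$-part collapses to multiplication by $\sigma_{C_0(E)}([H,T])$ on the nose; (5) invoke property $(\gamma)$(i) together with \cite[Corollary 7.2]{MN06} to conclude $\sigma_{C_0(E)}([H,T])=\mathrm{id}_{C_0(E)}$ in $KK^G$, so that the whole composite equals $z$.

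The main obstacle I anticipate is step (3)–(4): carefully organizing the iterated Kasparov product so that the $z$-part and the $(H,T)$-part genuinely separate, i.e. producing an honest isomorphism of Fredholm $C_0(E)$-modules (not merely a homotopy after the fact) exhibiting the composite as $z$ tensored with the idempotent-type class, and checking that the operator perturbations introduced along the way — the $M_i$-type cutoffs implicit in forming products, and the replacement of $\tilde T$ by its cut-off average $\sum_g g(c)Tg(c)$ — are all compact so that the class is unchanged. The bookkeeping is parallel to that in the proof of Proposition \ref{prop_surj} and in Proposition \ref{prop_barcycle_back}, so I would lean on those computations and on Lemmas \ref{lem_nondegen}–\ref{lem_comp_int} for the requisite compactness estimates, and present the cycle-level identification in the discrete case, leaving the general case to the reader as elsewhere in the paper.
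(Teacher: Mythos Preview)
Your approach is genuinely different from the paper's, and the gap you yourself flag at steps (3)--(4) is real. The paper does \emph{not} attempt a direct cycle-level identification of $\nu^{G,T}_A\circ\mu^G_A(z)$ with $z\otimes\sigma_{C_0(E)}([H,T])$. Instead it argues indirectly: from Proposition~\ref{prop_surj} one knows $\mu^G_A\circ\nu^{G,T}_A = j^G_r(\sigma_A([H,T]))_\ast$ on the right-hand side; composing with $\mu^G_A$ on the right and comparing with the standard commuting square for $[H,T]_\ast$ and $\mu^G_A$ yields $\nu^{G,T}_A\circ\mu^G_A=[H,T]_\ast$ \emph{provided} $\mu^G_A$ is already an isomorphism. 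The paper then invokes the Meyer--Nest Dirac morphism (\cite[Theorem~5.2]{MN06}): for arbitrary $A$ there is a proper $\tilde A$ and $f\in KK^G(\tilde A,A)$ with $\mu^G_{\tilde A}$ an isomorphism and $f_\ast$ an isomorphism on the left-hand side; naturality of the $(\gamma)$-morphism (Proposition~\ref{prop_natural}) transports the identity $\nu^{G,T}_{\tilde A}\circ\mu^G_{\tilde A}=[H,T]_\ast$ along $f_\ast$. Only at the very end does condition (i) enter, via the fact that $[H,T]_\ast=\mathrm{id}$ on the left-hand side.

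Concretely, two points where your sketch would need substantially more than you indicate: first, the reduction to $A=\mathbb{C}$ is not parallel to Proposition~\ref{prop_surj} --- there the input is a $K$-theory class, hence a projection, whereas here the input is an arbitrary $KK^G(C_0(E),A)$-cycle, and there is no analogous finite-dimensional representative to fall back on. Second, the ``separation'' of the $z$-factor from the $(H,T)$-factor in the iterated product $\bigl([p_c]\otimes j^G_r(z)\bigr)\otimes[\tilde T_A]$ is precisely a commutation statement of the type proved in Proposition~\ref{prop_proper_product}, but in a different configuration (the roles of the proper algebra and the crossed product are exchanged), and carrying this out would require rebuilding that argument rather than citing it. The paper's route trades this technical Kasparov-product manipulation for the Meyer--Nest machinery and a diagram chase, which is cleaner here; your route would avoid Meyer--Nest but at the cost of a harder direct computation that is not the ``routine amplification'' you suggest.
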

\begin{proof} We first show that, for any $G$-$C^\ast$-algebra $A$ for which the assembly map $\mu^G_A$ is an isomorphism, the composition $ \nu^{G, T}_A\circ \mu^G_A$ coincides with the action of $(H, T)$ on $KK^G_\ast(C_0(E), A)$ defined by the Kasparov product
\[
KK^G(\mathbb{C}, \mathbb{C}) \times KK^G_\ast(C_0(E), A) \to KK^G_\ast(C_0(E), A).
\]
We consider the following composition
\begin{align} \label{diag_comp}
\xymatrix{
\mu^G_A \circ \nu^{G, T}_A \circ \mu^G_A\colon  KK^G_\ast(C_0(E), A) \ar[r] &  KK_\ast(\mathbb{C}, A\rtimes_rG)
}
\end{align} 
We know that that the composition $\mu^G_A\circ \nu^{G, T}_A$ coincides with $j^G_r(\sigma_A([H,T]))_\ast$ on $KK_\ast(\mathbb{C}, A\rtimes_rG)$ via \eqref{eq_comp2}. From this, we can deduce that the composition $\nu^{G, T}_A\circ \mu^G_A$ coincides with the action $[H, T]_\ast$ of $(H, T)$ on $KK^G_\ast(C_0(E), A)$. To see this, we write the composition \eqref{diag_comp} in two ways to have the following commutative diagram:
\begin{align} \label{diag_comp1}
\xymatrix{
KK^G_\ast(C_0(E), A) \ar[d]_{\cong}^{\mu^G_A} \ar[r]_{\nu^{G, T}_A\circ \mu^G_A} &   KK^G_\ast(C_0(E), A) \ar[d]_{\cong}^{\mu^G_A}\\
KK_\ast(\mathbb{C}, A\rtimes_rG) \ar[r]_{\mu^G_A\circ \nu^{G, T}_A}  & KK_\ast(\mathbb{C}, A\rtimes_rG).
}
\end{align}
The composition $\mu^G_A\circ \nu^{G, T}_A$ in the bottom is $j^G_r(\sigma_A([H,T]))_\ast$. Note that we have also the following commutative diagram
\begin{align} \label{diag_comp3}
\xymatrix{
KK^G_\ast(C_0(E), A) \ar[d]_{\cong}^{\mu^G_A} \ar[r]_{[H,T]_\ast} &   KK^G_\ast(C_0(E), A) \ar[d]_{\cong}^{\mu^G_A}\\
KK_\ast(\mathbb{C}, A\rtimes_rG) \ar[r]_{j^G_r(\sigma_A([H,T]))_\ast}  & KK_\ast(\mathbb{C}, A\rtimes_rG).
}
\end{align}
Since the vertical arrows $\mu^G_A$ are isomorphisms, comparing the two diagram \eqref{diag_comp1}, \eqref{diag_comp3}, we see that $\nu^{G, T}_A\circ \mu^G_A=[H, T]_\ast$. We next show $\nu^{G, T}_A\circ \mu^G_A=[H, T]_\ast$ for any $A$. For this, we recall the following very useful fact established by Meyer and Nest (see Theorem 5.2 of \cite{MN06}). For any $G$-$\Calg$ $A$, there are a ``proper'' algebra $\tilde A$ for which the assembly map $\mu^G_{\tilde A}$ is an  isomorphism and a morphism $f$ from $\tilde A$ to $A$ in the category $KK^G$ which induces an isomorphism on the left-hand side of the Baum--Connes conjecture. That is, we have the following diagram:
\begin{align} \label{diag_5}
\xymatrix{
 KK_\ast^G(C_0(E), \tilde A) \ar[d]^{\mu^G_{\tilde A}}_{\cong} \ar[r]^{\cong}_{f_\ast} & KK_\ast^G(C_0(E), A) \ar[d]^{\mu^G_A} \\
 KK_\ast(\mathbb{C}, \tilde A\rtimes_rG)  \ar[r]_{f\rtimes_r1_\ast} & KK_\ast(\mathbb{C}, A\rtimes_rG)  
}
\end{align} We compose the diagram \eqref{diag_5} with the $(\gamma)$-morphisms to get the following diagram
\begin{align} \label{diag_6}
\xymatrix{
 KK_\ast^G(C_0(E), \tilde A) \ar[d]^{\nu^{G, T}_{\tilde A}\circ\mu^G_{\tilde A}} \ar[r]^{\cong}_{f_\ast} & KK_\ast^G(C_0(E), A) \ar[d]^{\nu^{G, T}_{A}\circ\mu^G_{A}}  \\
 KK_\ast^G(C_0(E), \tilde A) \ar[r]^{\cong}_{f_\ast} & KK_\ast^G(C_0(E), A).   \\
}
\end{align} This diagram is commutative since the $(\gamma)$-morphisms are natural (Lemma \ref{prop_natural}). The previous argument shows that $\nu^{G, T}_{\tilde A}\circ\mu^G_{\tilde A}=[H, T]_\ast$. We can compare the diagram \eqref{diag_6} to the diagram 
\begin{align*} 
\xymatrix{
 KK_\ast^G(C_0(E), \tilde A) \ar[d]^{[H,T]_\ast} \ar[r]^{\cong}_{f_\ast} & KK_\ast^G(C_0(E), A) \ar[d]^{[H,T]_\ast}  \\
 KK_\ast^G(C_0(E), \tilde A) \ar[r]^{\cong}_{f_\ast} & KK_\ast^G(C_0(E), A).   \\
}
\end{align*} 
Since the horizontal arrows $f_\ast$ are isomorphisms, we now deduce that the composition $\nu^{G, T}_{\tilde A}\circ\mu^G_{\tilde A}$ coincides with the action $[H, T]_\ast$ for any $A$. It is a standard fact that the $G$-equivariant Kasparov cycle $(H, T)$ acts as the identity on the left-hand side $KK_\ast^G(C_0(E), A)$ of the Baum--Connes conjecture for any $A$, provided that the cycle $(H,T)$ is $K$-equivariantly homotopic to $1_K$ for any compact subgroup $K$ of $G$ (see \cite{MN06}). We now see that the composition $\nu^{G, T}_{\tilde A}\circ\mu^G_{\tilde A}$ is the identity for any $A$.
\end{proof}

Now, we obtain our main results.
\begin{theorem} \label{thm_BCCmain} Suppose there is a $G$-equivariant Kasparov cycle $(H, T)$ with property $(\gamma)$. Then:
\begin{enumerate}[(i)]
\item the strong Novikov conjecture holds for $G$, i.e. the assembly map $\mu^G_A$ is split-injective for any $A$,
\item the $(\gamma)$-morphism $\nu^{G, T}_A$ is a left-inverse of the assembly map $\mu^G_A$,
\item the assembly map $\mu^G_A$ is an isomorphism if and only if the cycle $(H, T)$ acts as the identity on the right-hand side group $KK_\ast(\mathbb{C}, A\rtimes_rG)$  via the composition \eqref{eq_comp2}.
\end{enumerate}
\end{theorem}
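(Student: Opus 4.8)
The plan is to deduce all three parts from the two composition formulas already established in Propositions~\ref{prop_surj} and \ref{prop_inj}. Proposition~\ref{prop_inj} gives that $\nu^{G,T}_A\circ\mu^G_A=\mathrm{id}$ on $KK^G_\ast(C_0(E),A)$, and Proposition~\ref{prop_surj} gives that $\mu^G_A\circ\nu^{G,T}_A$ equals the action of $j^G_r(\sigma_A([H,T]))$ on $KK_\ast(\mathbb{C},A\rtimes_rG)$. These are exactly the two ingredients needed.

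First, part (ii) is immediate: it is precisely the content of Proposition~\ref{prop_inj}, restated. Part (i) then follows formally: since $\nu^{G,T}_A$ is a left inverse of $\mu^G_A$, the map $\mu^G_A$ is split-injective, hence injective, which is the strong Novikov conjecture for $G$ (with coefficients). I would simply note that split-injectivity is exactly the existence of a one-sided inverse and cite part (ii).

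For part (iii), I argue both implications. If $\mu^G_A$ is an isomorphism, then from $\nu^{G,T}_A\circ\mu^G_A=\mathrm{id}$ we get $\nu^{G,T}_A=(\mu^G_A)^{-1}$, and composing on the other side, $\mu^G_A\circ\nu^{G,T}_A=\mathrm{id}$ as well; but by Proposition~\ref{prop_surj} this composition is the action of $j^G_r(\sigma_A([H,T]))$ on $KK_\ast(\mathbb{C},A\rtimes_rG)$, so that action is the identity, i.e. $(H,T)$ acts as the identity on the right-hand side via \eqref{eq_comp2}. Conversely, suppose $(H,T)$ acts as the identity on $KK_\ast(\mathbb{C},A\rtimes_rG)$ via \eqref{eq_comp2}. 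Then Proposition~\ref{prop_surj} says $\mu^G_A\circ\nu^{G,T}_A=\mathrm{id}$, so $\nu^{G,T}_A$ is also a right inverse of $\mu^G_A$; combined with part (ii) (it is a left inverse), $\mu^G_A$ has a two-sided inverse and is therefore an isomorphism (with inverse $\nu^{G,T}_A$).

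There is essentially no obstacle here: the proof is a short formal manipulation of the two propositions, which themselves contain all the analytic work. The only point worth stating carefully is the standard fact that a map possessing both a left inverse and a right inverse is invertible (and the two inverses coincide), and that injectivity plus a left inverse is the precise meaning of split-injectivity. I would keep the write-up to a few lines, emphasizing that (i) and (ii) are direct from Proposition~\ref{prop_inj} and that (iii) is the combination of Propositions~\ref{prop_surj} and \ref{prop_inj}.
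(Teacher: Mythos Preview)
Your proposal is correct and follows essentially the same approach as the paper: parts (i) and (ii) are read off directly from Proposition~\ref{prop_inj}, and part (iii) is obtained by combining Proposition~\ref{prop_inj} with Proposition~\ref{prop_surj}. The paper's write-up is even terser than yours, simply noting that $\mu^G_A$ is an isomorphism if and only if $\mu^G_A\circ\nu^{G,T}_A$ is the identity and then invoking Proposition~\ref{prop_surj}.
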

\begin{proof} The first and the second claims are proved in Proposition \ref{prop_inj}. It follows that the assembly map $\mu^G_A$ is an isomorphism if and only if the composition $\mu^G_A\circ\nu^{G, T}_A$ is the identity. By Proposition \ref{prop_surj}, this is the case if and only if the cycle $(H, T)$ acts as the identity on the right-hand side group $KK_\ast(\mathbb{C}, A\rtimes_rG)$.
\end{proof}

\begin{corollary} \label{cor_BCCmain}  Suppose there is a $G$-equivariant Kasparov cycle $(H, T)$ with property $(\gamma)$ which acts surjectively on the right-hand side group $KK_\ast(\mathbb{C}, A\rtimes_rG)$ via the composition \eqref{eq_comp2} for any $A$. Then, the Baum--Connes conjecture with coefficients holds for $G$. The $(\gamma)$-morphism $\nu^{G, T}_A$ is the inverse of the assembly map $\mu^G_A$ for any $A$.
\end{corollary}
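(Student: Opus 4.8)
The plan is to bootstrap the two halves of Theorem~\ref{thm_BCCmain} --- Propositions~\ref{prop_inj} and~\ref{prop_surj} --- against the surjectivity hypothesis in order to promote ``left-inverse'' to ``two-sided inverse''. I would fix a separable $G$-$\Calg$ $A$ and argue as follows. First, Proposition~\ref{prop_inj} gives $\nu^{G,T}_A\circ\mu^G_A=\mathrm{id}$ on $KK^G_\ast(C_0(E),A)$, so $\mu^G_A$ is injective. Second, Proposition~\ref{prop_surj} identifies $\mu^G_A\circ\nu^{G,T}_A$ with the action $j^G_r(\sigma_A([H,T]))_\ast$ of $(H,T)$ on $KK_\ast(\mathbb{C},A\rtimes_rG)$ defined via~\eqref{eq_comp2}; by hypothesis this action is surjective, and a composite map is surjective only if its outer factor is, so $\mu^G_A$ is also surjective. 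Hence $\mu^G_A$ is an isomorphism, i.e.\ the Baum--Connes conjecture with coefficients holds for $G$.

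For the final assertion, I would note that once $\mu^G_A$ is invertible the relation $\nu^{G,T}_A\circ\mu^G_A=\mathrm{id}$ forces
\[
\nu^{G,T}_A=\nu^{G,T}_A\circ\mu^G_A\circ(\mu^G_A)^{-1}=(\mu^G_A)^{-1},
\]
so $\nu^{G,T}_A$ is the (two-sided) inverse of the assembly map. Running this for every separable $A$ completes the proof.

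I do not expect a genuine obstacle here: the argument is a purely formal consequence of Propositions~\ref{prop_inj} and~\ref{prop_surj}, which were already established in the course of proving Theorem~\ref{thm_BCCmain}. The only place where the new hypothesis enters is the implication ``$\mu^G_A\circ\nu^{G,T}_A$ surjective $\Rightarrow$ $\mu^G_A$ surjective''; everything else is recycled verbatim. One could equivalently observe that, since $\nu^{G,T}_A$ becomes a two-sided inverse of $\mu^G_A$ the moment $\mu^G_A$ is invertible, surjectivity of the action $j^G_r(\sigma_A([H,T]))_\ast$ in fact forces it to equal the identity, and then simply invoke Theorem~\ref{thm_BCCmain}(iii); this reformulation makes transparent why the hypothesis of Corollary~\ref{cor_BCCmain} is equivalent in force to the identity-action hypothesis of part~(iii).
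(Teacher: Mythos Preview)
Your argument is correct and is precisely the intended derivation: the paper states Corollary~\ref{cor_BCCmain} without a separate proof, treating it as immediate from Theorem~\ref{thm_BCCmain} (equivalently, from Propositions~\ref{prop_inj} and~\ref{prop_surj}), and your write-up spells out exactly the formal step needed to pass from the surjectivity hypothesis to the identity-action hypothesis of part~(iii). Your closing observation that surjectivity of $j^G_r(\sigma_A([H,T]))_\ast$ is in fact equivalent to it being the identity (once one has the left-inverse relation) is also on target.
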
 

\begin{corollary} \label{cor_BCCmain2} Suppose there is a $G$-equivariant Kasparov cycle $(H, T)$ with property $(\gamma)$ which is homotopic to $1_G$. Then, the Baum--Connes conjecture with coefficients holds for $G$. The $(\gamma)$-morphism $\nu^{G, T}_A$ is the inverse of the assembly map $\mu^G_A$ for any $A$.
\end{corollary}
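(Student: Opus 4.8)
The plan is to reduce the statement to Theorem~\ref{thm_BCCmain} (equivalently, to Corollary~\ref{cor_BCCmain}) by checking that the hypothesis ``homotopic to $1_G$'' forces the cycle $(H,T)$ to act as the identity on every right-hand side group. First I would observe that, since the ring $R(G)=KK^G(\C,\C)$ is by definition the set of homotopy equivalence classes of $G$-equivariant Kasparov cycles, the assumption that $(H,T)$ is homotopic to $1_G$ says precisely that $[H,T]=1_G$ in $R(G)$.

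Next I would compute the action of $[H,T]$ on $KK_\ast(\C, A\rtimes_rG)$ obtained from the composition \eqref{eq_comp2}. Both the amplification map $\sigma_A\colon KK^G_\ast(\C,\C)\to KK^G_\ast(A,A)$ and the descent map $j^G_r\colon KK^G_\ast(A,A)\to KK_\ast(A\rtimes_rG, A\rtimes_rG)$ are unital ring homomorphisms, so $\sigma_A(1_G)=\mathrm{id}_A$ in $KK^G(A,A)$ and $j^G_r(\mathrm{id}_A)=\mathrm{id}_{A\rtimes_rG}$ in $KK(A\rtimes_rG, A\rtimes_rG)$; hence $j^G_r(\sigma_A([H,T]))=\mathrm{id}_{A\rtimes_rG}$, i.e.\ the cycle $(H,T)$ acts as the identity on $KK_\ast(\C, A\rtimes_rG)$. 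In particular this action is surjective for every separable $G$-$\Calg$ $A$, so the hypothesis of Corollary~\ref{cor_BCCmain} is satisfied.

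Finally I would invoke what has already been established. By Theorem~\ref{thm_BCCmain}(iii) (or directly by Corollary~\ref{cor_BCCmain}) the assembly map $\mu^G_A$ is an isomorphism for every separable $G$-$\Calg$ $A$, which is the Baum--Connes conjecture with coefficients for $G$. Moreover Theorem~\ref{thm_BCCmain}(ii) gives $\nu^{G,T}_A\circ\mu^G_A=\mathrm{id}$ on $KK^G_\ast(C_0(E),A)$; composing on the right with $(\mu^G_A)^{-1}$ then yields $\nu^{G,T}_A=(\mu^G_A)^{-1}$, so the $(\gamma)$-morphism is the inverse of the assembly map. I do not expect a genuine obstacle here: the whole argument is an unwinding of the definition of $R(G)$ and of the functorial maps $\sigma_A$ and $j^G_r$, followed by a citation of the main theorem; the only point requiring a moment's care is that homotopy of cycles is the same as equality in $R(G)$ and that $\sigma_A$ and $j^G_r$ preserve units.
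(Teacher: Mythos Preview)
Your proposal is correct and matches the paper's intended argument: the paper states Corollary~\ref{cor_BCCmain2} without proof, as it follows immediately from Corollary~\ref{cor_BCCmain} (or Theorem~\ref{thm_BCCmain}) once one notes that $[H,T]=1_G$ implies $j^G_r(\sigma_A([H,T]))=\mathrm{id}_{A\rtimes_rG}$, exactly as you spell out.
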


\begin{remark}
In order to prove the third claim in Theorem \ref{thm_BCCmain}, we could have skipped Proposition \ref{prop_inj} and showed  that the assembly map $\mu^G_A$ is an isomorphism for any $A$ if $(H, T)$ acts as a surjective endomorphism on the right-hand side group $KK_\ast(\mathbb{C}, A\rtimes_rG)$ for any $A$ by the following ``surjectivity implies injectivity'' principle. Indeed, if $(H, T)$ acts as a surjective endomorphism on the right-hand side group $KK_\ast(\mathbb{C}, A\rtimes_rG)$ for any $A$, by Proposition \ref{prop_surj}, we know that $\mu^G_A$ is surjective for any $A$. Now, we use again the result by Meyer and Nest. Let $\tilde A$ and $f$ be as in the proof of Proposition \ref{prop_inj}. Furthermore, we can take the mapping cone $C_f$ of $f$ and obtain the following diagram of six-term exact sequences and the assembly maps
\begin{align*} 
\xymatrix{
\ar[r] & KK_\ast^G(C_0(E), C_f) \ar[d]^{\mu^G_{C_f}}  \ar[r] & KK_\ast^G(C_0(E), \tilde A) \ar[d]^{\mu^G_{\tilde A}}_{\cong} \ar[r]^{\cong}_{f_\ast} & KK_\ast^G(C_0(E), A) \ar[d]^{\mu^G_A} \ar[r] &  \\
\ar[r] & KK_\ast(\mathbb{C}, C_f\rtimes_rG)  \ar[r] & KK_\ast(\mathbb{C}, \tilde A\rtimes_rG)  \ar[r]_{f\rtimes_r1_\ast} & KK_\ast(\mathbb{C}, A\rtimes_rG)  \ar[r] &
}
\end{align*} where the group $KK_\ast^G(C_0(E), C_f)$ is zero since $f_\ast$ is an isomorphism. Thus, if we know the surjectivity of the assembly map for all coefficients, in particular for $C_f$, the ``obstruction'' $K_\ast(C_f\rtimes_rG)$ vanishes. It follows that $\mu^G_A$ is an isomorphism for any $A$. We remark that for this argument, we do not need the condition (i) of property $(\gamma)$. Indeed, the definition of the $(\gamma)$-morphism and Proposition \ref{prop_surj} work for any proper $G$-equivariant Kasparov cycle $(H, T)$. Hence, we have the following:
\begin{theorem}  Suppose that for any separable $G$-$\Calg$ $A$, there is a proper $G$-equivariant Kasparov cycle $(H, T)$ which acts surjectively on the right-hand side group $KK_\ast(\mathbb{C}, A\rtimes_rG)$ via the composition \eqref{eq_comp2}. Then, the Baum--Connes conjecture with coefficients holds for $G$.
\end{theorem}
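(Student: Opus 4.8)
The plan is to run the ``surjectivity implies injectivity'' argument sketched above, combining the proper-cycle form of Proposition \ref{prop_surj} with the Meyer--Nest reformulation of the Baum--Connes machinery. The first point I would establish is that $\mu^G_B$ is \emph{surjective} for every separable $G$-$\Calg$ $B$. By hypothesis there is a proper $G$-equivariant Kasparov cycle $(H, T)$, depending on $B$, that acts surjectively on $KK_\ast(\mathbb{C}, B\rtimes_rG)$ via \eqref{eq_comp2}. Since the construction of the $(\gamma)$-morphism $\nu^{G, T}_B$ and the conclusion of Proposition \ref{prop_surj} use only that $(H, T)$ is proper in the sense of Definition \ref{def_proper} --- condition (i) of property $(\gamma)$ is never invoked --- we obtain $\mu^G_B\circ\nu^{G, T}_B = j^G_r(\sigma_B([H, T]))_\ast$, and the right-hand side is surjective by assumption; hence so is $\mu^G_B$.

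Next I would fix an arbitrary separable $G$-$\Calg$ $A$ and prove that $\mu^G_A$ is injective. Applying \cite[Theorem 5.2]{MN06} as in the proof of Proposition \ref{prop_inj}, there is a ``proper'' algebra $\tilde A$ for which $\mu^G_{\tilde A}$ is an isomorphism, together with a $KK^G$-morphism $f\colon\tilde A\to A$ whose induced map $f_\ast$ on $KK^G_\ast(C_0(E),-)$ is an isomorphism; after replacing $\tilde A$ by a $KK^G$-equivalent model I may assume $f$ is a genuine $G$-equivariant $\ast$-homomorphism, so that its mapping cone $C_f$ is again a separable $G$-$\Calg$. The (semisplit) mapping-cone extension yields six-term exact sequences in $KK^G_\ast(C_0(E),-)$ and in $KK_\ast(\mathbb{C},-\rtimes_rG)$ that are compatible with the assembly map. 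Since $f_\ast$ is an isomorphism, exactness of the first sequence forces $KK^G_\ast(C_0(E), C_f)=0$; applying the surjectivity just established to $B = C_f$, the map $\mu^G_{C_f}\colon 0 = KK^G_\ast(C_0(E), C_f)\to KK_\ast(\mathbb{C}, C_f\rtimes_rG)$ is onto, whence $KK_\ast(\mathbb{C}, C_f\rtimes_rG)=0$.

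Finally, with $KK_\ast(\mathbb{C}, C_f\rtimes_rG)=0$ the lower six-term exact sequence shows that $f\rtimes_r1_\ast\colon KK_\ast(\mathbb{C},\tilde A\rtimes_rG)\to KK_\ast(\mathbb{C}, A\rtimes_rG)$ is injective. The commuting square relating $\mu^G_{\tilde A}$ (an isomorphism), $\mu^G_A$, $f_\ast$ (an isomorphism) and $f\rtimes_r1_\ast$ then exhibits $\mu^G_A$ as a composite of injective maps, so $\mu^G_A$ is injective; together with the surjectivity from the first paragraph, $\mu^G_A$ is an isomorphism. Since $A$ was arbitrary, the Baum--Connes conjecture with coefficients holds for $G$.

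Essentially the entire argument is a diagram chase once the global surjectivity statement is available, so the only genuinely non-formal ingredient is the Meyer--Nest approximation theorem. The single delicate bookkeeping point I would be careful about is realizing the $KK^G$-morphism $f$ by an honest $\ast$-homomorphism, so that the mapping cone $C_f$ is a bona fide separable $G$-$\Calg$ to which the surjectivity of the assembly map applies; beyond that I do not anticipate any obstacle.
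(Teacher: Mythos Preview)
Your proposal is correct and follows essentially the same route as the paper's own argument: establish global surjectivity of $\mu^G_B$ from Proposition \ref{prop_surj} (which indeed only needs properness), invoke the Meyer--Nest approximation $f\colon\tilde A\to A$, pass to the mapping cone $C_f$, and use surjectivity for $C_f$ to kill the obstruction $KK_\ast(\mathbb{C}, C_f\rtimes_rG)$, after which a diagram chase with the six-term sequences finishes. Your extra care in realizing $f$ by a genuine $\ast$-homomorphism so that $C_f$ is a separable $G$-$\Calg$ is a valid technical point the paper leaves implicit.
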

\end{remark}

\section{Unbounded Cycles and Property $(\gamma)$}
Let $D$ be an odd, unbounded, (essentially) self-adjoint operator on $H$ with compact resolvent which is almost $G$-equivariant: i.e. $G$ preserves the domain of $D$ and $g(D)-D$ is bounded for all $g$ in $G$. It is a fact that the pair $(H, T)$ is a $G$-equivariant Kasparov cycle, where $T$ is the bounded transform of $D$:
\[
T=\frac{D}{(1+D^2)^{\frac12}}.
\]

\begin{theorem} \label{thm_gamma} Assume that there is a non-degenerate representation of the $G$-$\Calg$ $C_0(E)$ on $H$ and that there is a dense $G$-subalgebra $B$ of $C_c(E)$ (compactly supported functions) which preserves the domain of $D$, such that for any $b$ in $B$, there is $\chi$ in $C_c(E)$ so that for any $g$ in $G$,
\[
[D, g(b)]=b_gg(\chi)
\]
where $b_g$ are operators on $H$ which are uniformly bounded in $g$ in $G$. Assume further that there is a cut-off function on $E$ in $B$. Then, the Kasparov cycle $(H, T)$ is proper, i.e. satisfies the conditions (ii.i) and (ii.ii) for property $(\gamma)$. Hence, if we further assume that the cycle $(H, T)$ is $K$-equivariantly homotopic to $1_K$ for any compact subgroup $K$ of $G$, the cycle $(H, T)$ has property $(\gamma)$.
\end{theorem}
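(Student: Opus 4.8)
The plan is to establish the two conditions (ii.i) and (ii.ii) of property $(\gamma)$ for the bounded transform $T=D(1+D^2)^{-1/2}$, with respect to the given representation of $C_0(E)$ on $H$; granting this, the last assertion is immediate, since the extra hypothesis that $(H,T)$ is $K$-equivariantly homotopic to $1_K$ for every compact $K\subset G$ supplies condition (i), and (i) together with (ii) is precisely property $(\gamma)$. Write $S_\lambda:=(1+\lambda+D^2)^{-1}$, which is a compact operator by the compact-resolvent hypothesis, and recall the functional-calculus identity
\[
(1+D^2)^{-1/2}=\frac1\pi\int_0^\infty S_\lambda\,\lambda^{-1/2}\,d\lambda,\qquad\text{so}\qquad T=\frac1\pi\int_0^\infty D S_\lambda\,\lambda^{-1/2}\,d\lambda ,
\]
the second integral being understood in the strong operator topology. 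Note that $DS_\lambda=D(1+\lambda+D^2)^{-1}$ is also compact, with $\|DS_\lambda\|\le\tfrac12(1+\lambda)^{-1/2}$, $\|D^2S_\lambda\|\le1$ and $\|S_\lambda\|\le(1+\lambda)^{-1}$.

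For condition (ii.i), the set of $\phi\in C_0(E)$ for which $(g\mapsto[g(\phi),T])$ lies in $C_0(G,K(H))$ is norm-closed, since $\|[g(\phi),T]\|\le2\|T\|\,\|\phi\|$ uniformly in $g$, and it contains the dense subalgebra $B$; so it suffices to treat $\phi=b\in B$. For such $b$, continuity of $g\mapsto[g(b),T]=g(b)T-Tg(b)$ into $B(H)$ is automatic from norm-continuity of $g\mapsto g(b)$, so the content is that $[g(b),T]$ is compact and tends to $0$ in norm as $g\to\infty$. I would deduce this from the identity $[g(b),T]=\tfrac1\pi\int_0^\infty[g(b),DS_\lambda]\,\lambda^{-1/2}\,d\lambda$ together with the computation
\[
[g(b),DS_\lambda]=-\,b_g g(\chi)S_\lambda\;+\;DS_\lambda\, b_g g(\chi)\, DS_\lambda\;+\;D^2S_\lambda\, b_g g(\chi)\, S_\lambda ,
\]
which follows from $[D,g(b)]=b_g g(\chi)$, the resolvent identity, and $DS_\lambda D=D^2S_\lambda$. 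Each of the three summands has the shape (bounded)$\,\cdot\, g(\chi)\,\cdot\,$(compact), so by Lemma \ref{lem_nondegen} each tends to $0$ in norm as $g\to\infty$, for every fixed $\lambda$; and, using the spectral estimates above and the uniform bound on $\|b_g\|$, each is bounded in norm, uniformly in $g$, by $C(1+\lambda)^{-1}$, so the integrand is dominated by the $\lambda$-integrable function $C\lambda^{-1/2}(1+\lambda)^{-1}$. Dominated convergence gives $\|[g(b),T]\|\to0$, and $[g(b),T]$, being a norm-convergent integral of compacts, is compact.

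For condition (ii.ii), take the cut-off function $c\in B$, with $[D,g(c)]=c_g g(\chi)$. Since $\int_G g(c)^2\,d\mu_G(g)=1$ (in the strong operator topology), one has for each $\lambda$
\[
\int_G g(c)\,DS_\lambda\, g(c)\,d\mu_G(g)-DS_\lambda=-\int_G g(c)\,[g(c),DS_\lambda]\,d\mu_G(g)=:-A_\lambda .
\]
In each of the three terms of $[g(c),DS_\lambda]$ the $g$-independent resolvent factor standing at the far right ($S_\lambda$ or $DS_\lambda$) can be pulled out of the $G$-integral, reducing the remaining integral to $\int_G g(c)\,X_g\,g(\chi)\,d\mu_G(g)$ with $(X_g)_{g\in G}$ uniformly bounded; by Lemma \ref{lem_nondegen_int} this is a bounded operator with norm $\le C\sup_g\|X_g\|$, so $A_\lambda$ is compact with $\|A_\lambda\|\le C(1+\lambda)^{-1}$. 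Integrating in $\lambda$ and interchanging the integrals (justified by the bounds just obtained) gives
\[
\int_G g(c)\,T\,g(c)\,d\mu_G(g)-T=-\frac1\pi\int_0^\infty A_\lambda\,\lambda^{-1/2}\,d\lambda ,
\]
a norm-convergent integral of compact operators, hence compact; this is (ii.ii).

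The main obstacle, and where genuine care is needed, is the convergence of the $\lambda$-integral: the naive functional-calculus estimates bound the individual commutator pieces only by $O(\lambda^{-1/2})$, which is not integrable against $\lambda^{-1/2}\,d\lambda$, so one must exploit that every piece additionally carries a compact factor adjacent to $g(\chi)$ — this supplies both the vanishing as $g\to\infty$ (via Lemma \ref{lem_nondegen}) and, through the trivial bound $\|g(\chi)DS_\lambda\|\le\|\chi\|\,\|DS_\lambda\|$, the additional $\lambda$-decay needed for an integrable majorant. Working with the ``$\lambda$'' resolvent representation of $(1+D^2)^{-1/2}$ rather than the ``$\lambda^2$'' one is what makes this bookkeeping close, and the same point underlies the interchange-of-integrals step in (ii.ii). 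The remaining ingredients — norm-continuity of $g\mapsto g(b)$, compactness of $S_\lambda$ and of $DS_\lambda$, and that $(H,T)$ is a $G$-equivariant Kasparov cycle at all — are standard (cf. the bounded-transform arguments of Baaj--Julg).
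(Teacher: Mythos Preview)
Your argument is correct and follows essentially the same route as the paper's own proof: integral representation of the bounded transform, commutator expansion against the resolvents, Lemma~\ref{lem_nondegen} for the $C_0$-vanishing, Lemma~\ref{lem_nondegen_int} for the $G$-integral in (ii.ii), and an integrable $\lambda$-majorant. The only difference is that the paper writes $T=\pi^{-1}\int_0^\infty\bigl((D+i\sqrt{1+\lambda^2})^{-1}+(D-i\sqrt{1+\lambda^2})^{-1}\bigr)\,d\lambda$, so the commutator with $g(b)$ collapses to the single term $(D\pm i\mu)^{-1}b_g g(\chi)(D\pm i\mu)^{-1}$ rather than your three; your closing remark that the ``$\lambda$'' parametrization is what makes the bookkeeping close is therefore not quite right --- both parametrizations yield an integrable majorant once the commutator is expanded.
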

\begin{proof} For any $b$ in $B$, let $\chi$ in $C_c(E)$ and $b_g$ as given. Using the following formula (in the strong operator topology)
\[
T=\frac{2}{\pi}\int^{\infty}_0\left(\frac{D}{1+\lambda^2+D^2} \right)d\lambda = \pi^{-1}\int^{\infty}_0\left(\frac{1}{D+\sqrt{1+\lambda^2}i} + \frac{1}{D-\sqrt{1+\lambda^2}i}  \right)d\lambda
\] 
and the formula
\[
[g(b), (D\pm\sqrt{1+\lambda^2}i)^{-1}]= (D\pm\sqrt{1+\lambda^2}i)^{-1} [D, g(b)] (D\pm\sqrt{1+\lambda^2}i)^{-1},
\]
we see that in order to show that for any $b$ in $B$,
\[
\text{the function}\,\,\, (g \mapsto [g(b), T])\,\, \text{belongs to}\,\, C_0(G, K(H)),
\]
it suffices to show the following claim that operators
\[
\int^{\infty}_0\left((D\pm\sqrt{1+\lambda^2}i)^{-1} (b_gg(\chi)) (D\pm\sqrt{1+\lambda^2}i)^{-1}\right)d\lambda
\]
are compact operators whose norms vanish as $g$ goes to infinity. Using 
\[
\left( g(\chi)(D\pm\sqrt{1+\lambda^2}i)^{-\frac{1}{2}}\right)\left(g(\chi))(D\pm\sqrt{1+\lambda^2}i)^{-\frac{1}{2}}\right)^\ast \leq g(\chi)(D^2+1)^{-1} g(\chi)^\ast
\]
and Lemma \ref{lem_nondegen}, we can see that
\[
\sup_{\lambda \in [0,\infty]} ||(b_gg(\chi))(D\pm\sqrt{1+\lambda^2}i)^{-\frac{1}{2}}||
\]
vanishes as $g$ goes to infinity. With this in mind, we see that the claim holds since
\[
\int^{\infty}_0(1+\lambda^2)^{-\frac{3}{2}}d\lambda
\]
is absolutely convergent. Now, suppose $c$ is a cut-off function in $B$ and let $\chi$ in $C_c(E)$ and $c_g$ like before so that $[D, g(c)]=c_gg(\chi)$. We need to show
\[
\int_{g \in G} g(c)Tg(c)d{\mu_G}(g) - T \in K(H).
\]
By similar reasoning as above, this boils down to showing
\[
\int_{g \in G}g(c)\int^{\infty}_0\left((D\pm\sqrt{1+\lambda^2}i)^{-1} (c_gg(\chi)) (D\pm\sqrt{1+\lambda^2}i)^{-1}\right)d\lambda d{\mu_G}(g)
\]
is a compact operator. Notice that for each $g$ in $G$, the operator
\[
\int^{\infty}_0g(c)\left((D\pm\sqrt{1+\lambda^2}i)^{-1} (c_gg(\chi)) (D\pm\sqrt{1+\lambda^2}i)^{-1}\right)d\lambda
\]
is compact, so it suffices to show that the integral above is norm convergent over $G$, that is to show that for any $\epsilon>0$, there is a compact subset $K$ of $G$ such that for any compact subset $K'$ of $G\backslash K$, we have
\[
||\int_{K'} \int^{\infty}_0g(c)\left((D\pm\sqrt{1+\lambda^2}i)^{-1} (c_gg(\chi)) (D\pm\sqrt{1+\lambda^2}i)^{-1}\right)d\lambda d\mu_G(g)|| \leq \epsilon
\] 
This follows from that we have
\[
\sup_{K'\subset G\backslash K}||\int_{K'}g(c)\left((D\pm\sqrt{1+\lambda^2}i)^{-1} (c_gg(\chi))\right) d\mu_G(g)|| \leq (\sqrt{1+\lambda^2})^{-\frac{1}{2}}C_K
\]
where $C_K$ are constants which converge to $0$ as $K$ increases. Checking the last claim is left to the reader (Hint: combine Lemma \ref{lem_nondegen} and Lemma \ref{lem_nondegen_int}).
\end{proof}

\begin{example} \label{rem_npc} For any group $G$ which acts isometrically, properly and co-compactly on a simply connected, complete manifold $M$ with non-positive sectional curvature, the Witten perturbation $d_f + d_{f}^\ast$ of de-Rham operator on the Hilbert space $L^2(M, \Lambda^\ast T^\ast M)$ of the exterior algebra on $M$ (see \cite{Ka88}) satisfies the assumption in Theorem \ref{thm_gamma}, and thus defines a $G$-equivariant Kasparov cycle with property $(\gamma)$. Here, the operator $d_f=d+df\wedge$ is the sum of the exterior differential $d$ and the exterior multiplication by the one form $df$ of the function $f=d^2(x_0, x)$, the square of the distance function from some fixed base point $x_0$ of $M$. In particular, this gives us a concrete $G$-equivariant Kasparov cycle with property $(\gamma)$ for any co-compact closed subgroup $G$ of connected, semi-simple Lie group. We leave as an exercise for the reader to construct an unbounded cycle which satisfies the assumption in Theorem \ref{thm_gamma} for groups $G$ which act properly and co-compactly on a Euclidean building in the sense of \cite{KS91}.
\end{example}


\bibliography{Refs}
\bibliographystyle{alpha}

\end{document}